\documentclass[a4paper,oneside,10.5pt]{article}%
\usepackage{amsmath}
\usepackage{amsfonts}
\usepackage{indentfirst}
\usepackage{amssymb}
\usepackage{graphicx}%
\usepackage{appendix}

\usepackage[colorlinks,citecolor=blue,urlcolor=blue]{hyperref}

\providecommand{\U}[1]{\protect \rule{.1in}{.1in}}

\newtheorem{theorem}{Theorem}[section]

\newtheorem{corollary}[theorem]{Corollary}

\newtheorem{definition}[theorem]{Definition}
\newtheorem{assumption}[theorem]{Assumption}

\newtheorem{lemma}[theorem]{Lemma}

\newtheorem{proposition}[theorem]{Proposition}
\newtheorem{remark}[theorem]{Remark}

\newenvironment{proof}[1][Proof]{\noindent \textbf{#1.} }{\  \rule{0.5em}{0.5em}}
\numberwithin{equation}{section}

\begin{document}
	
	\title{Stochastic maximum principle for optimal control of infinitely delayed systems of functional type in infinite dimensions}
	\author{Guanwei Cheng\thanks{School of Mathematics, Shandong University, PR China, (guanwei.cheng@mail.sdu.edu.cn).}
        }
	\date{May 2026}
	\maketitle
	
	\textbf{Abstract}:  
This paper studies the optimal control problems of stochastic evolution equations with infinite delay of general functional type. By introducing a non-anticipative path derivative and its infinite-window dual operator, we derive the infinitely anticipated backward stochastic evolution equation (IABSEE) as the adjoint equation, and establish both necessary and sufficient maximum principles. As an application, we solve an infinite-delay linear-quadratic (LQ) control problem and obtain the explicit optimal control.
    \\
	\textbf{Keywords}: Functional delayed systems; Infinite delay; Path derivative; Anticipated backward stochastic evolution equation; Stochastic maximum principle.\\
	\textbf{MSC2020}: 93E20, 34K50, 60H15, 60H30.
		
		\addcontentsline{toc}{section}{\hspace*{1.8em}Abstract}

\section{Introduction.}

Stochastic delayed systems have emerged as a fundamental framework for modeling complex phenomena where the evolution of the state depends not only on the current status but also on the historical trajectory.
In real-world applications, delayed systems frequently exhibit a long-term memory effect, where the impact of past events decay slowly but persist over an infinite horizon.
Such effect is prevalent in diverse fields, such as financial engineering (e.g., long-range dependence in volatility \cite{Volatility-long-term} and asset returns \cite{return-long-term}), biological systems (e.g., cumulative hereditary factors in population growth \cite{population-long-term}), and climate modeling (e.g., historical $CO_2$ concentration dynamics \cite{carbon}). 
Consequently, establishing a rigorous control theory for infinitely delayed stochastic systems is imperative to accurately characterize these persistent memory effects.

Over the past decades, the Stochastic Maximum Principle (SMP) has been widely developed for systems with various types of finite delays. 
For point-wise delays, where history is captured at discrete lags, significant progress has been achieved in both finite dimensions (see e.g. the pioneering work of Chen and Wu \cite{SDDE1wuzhenchenli}) and infinite dimensions (see e.g. Dai, Zhou and Li \cite{SDDE-IH-1delay}).
Systems with moving-average delays, which integrate past trajectories over a finite window, have also been investigated in diverse settings. For instance, one may refer to Øksendal, Sulem and Zhang \cite{SDDE-12delay} for the mixture of point-wise and moving-average delays, to Agram and Øksendal \cite{SDDE-12delay+IH} for the infinite-horizon case.
Furthermore, optimal control for systems with integral delays relative to general finite measures has been explored in Chen and Huang \cite{BSDDE-12.delay-1type-1}, Guatteri and Orrieri \cite{SDDE-12.delay-2type-2}.
Notably, Liu, Song and Wang \cite{songjian2025} recently pushed the boundaries of this field by establishing SMP for general functional dependence (path-dependent) systems on Hilbert space, a domain that remains remarkably sparse.

Despite these advancements, existing works are fundamentally constrained by finite delay windows. To the best of our knowledge, the only extension to the infinite-delay setting was recently provided by Cheng \cite{ziji2} for finite-dimensional case, where the delay was restricted to an integral form. These gaps motivate us to bridge two frontiers: extending the general functional dependence to an infinite-delay structure in Hilbert spaces, thereby providing a unified framework for long-memory stochatic systems.

In this paper, we consider a class of optimal  stochastic control problem of infinitely delayed stochastic evolution equation (SEE) in the Hilbert space $H$. Let $\{W(t)\}_{t \geqslant 0}$ be a cylindrical Brownian motion. The state process $X(\cdot)$ is governed by
		\begin{equation}\label{state eq in introduction}
			\left\{
			\begin{aligned}
				dX(t) &= b(t,X_t, v_d(t))dt + \sigma(t,X_t,v_d(t))dW(t), \qquad t\in[0,T],\\
				X(t) &= \gamma(t), \quad v(t) = \varphi(t), \qquad t\in (-\infty, 0],
			\end{aligned}
			\right.
		\end{equation}
where $X_t := \{X(t-\theta): 0 \leqslant \theta < +\infty\}$ represents the general functional dependence of infinite path information, $v_d(t) := \int_{0}^{+\infty}\phi(t-\theta,t)v(t-\theta)\alpha(d\theta)$ means that the control is delayed via an integral with respect to a general finite measure $\alpha$. Our objective is to minimize a cost functional involving both running and terminal costs
	\begin{equation}\label{cost functional in introduction}
	\begin{aligned}
		J(v(\cdot))=\mathbb{E} \left[\int_{0}^{T} l\left(t, X_t, v_d(t)\right) d t\right]+ \mathbb{E}\big[h(X(T))\big].
	\end{aligned}
\end{equation}

The formulation in (\ref{state eq in introduction})-(\ref{cost functional in introduction}) represents a stochastic control system with infinite delay of general functional type, which covers above-mentioned systems with point-wise, moving-average and integral delays.
The main contributions of this paper are twofold, which also represents two key components in deriving the SMP for our optimal control problem:

\begin{itemize}
    \item \textbf{Fading-memory path space in infinite-dimensions.}     
    As mentioned in \cite{choice-phasespace2, choice-phasespace1}, 
    the choice of an appropriate phase space is the primary challenge when extending functional systems to the infinite-delay setting. 
    In contrast to existing literature on infinitely-delayed functional differential equations that primarily focuses on the well-posedness in finite dimensions (see e.g. \cite{07JMAA, 17JDE}), our work employs a robust functional framework on the infinite-dimensional fading-memory path space $C_\lambda((-\infty, T]; H)$ (see Definition~\ref{def: fading-memory path space}). 
    This space not only serves as the phase space for the well-posedness of the infinitely delayed SEE (see Theorem~\ref{thm:IABSDE-wellposed}),
    but also enables the rigorous analysis of path derivatives and their infinite-window dual operators (see Section~\ref{chapter Non-anticipative path derivatives and infinite-window dual operators in Hilbert spaces}), which forms the duality relationships central to SMP.

    \item \textbf{Dual analysis on infinite horizons and derivation of adjoint equation.} 
    We perform a dual analysis on the product space of infinite time and state, extending the finite-window work of \cite{songjian2025} to an unbounded horizon. 
    Specifically, we introduce the non-anticipative path derivative on $C_\lambda((-\infty, T]; H)$ (see Definition~\ref{def: non-anticipatively differentiable}), and construct its Hilbert adjoint (see Theorem~\ref{thm:R-adjoint}). 
    By proving the strong convergence and boundedness of this adjoint operator, we characterize its adapted version via an optional projection that integrates whole future path segments over $[0, \infty)$ (see \eqref{eq:adapted-adjoint}). This construction overcomes the finite-window constraints in \cite{songjian2025}, enabling the natural formulation of the infinitely anticipated backward stochastic evolution equation (IABSEE) as the adjoint equation. Furthermore, the well-posedness of the resulting IABSEE is established (see Theorem~\ref{thm:IABSDE-wellposed}).

\end{itemize}

The remainder of this paper is structured as follows. 
In Section~\ref{chapter preliminary}, we introduce some notations and address the well-posedness of both the infinitely delayed SEE and the IABSEE.
Section~\ref{chapter Non-anticipative path derivatives and infinite-window dual operators in Hilbert spaces} is dedicated to a detailed investigation of non-anticipative path derivatives and their associated dual operators.
Building on the duality developments, Section~\ref{section SMP} derives the necessary and sufficient conditions for maximum principles.
Finally, the theoretical results are illustrated through a linear-quadratic (LQ) control problem in Section~\ref{chapter applications}.

\section{Preliminary.}\label{chapter preliminary}
\subsection{Notations and Functional Spaces.}\label{section Notations and Functional Spaces}
        For generic Banach spaces $X$ and $Y$, $\mathcal{L}(X, Y)$ denotes the space of bounded linear operators mapping from $X$ to $Y$ with norm  $\|\cdot\|_{\mathcal{L}(K,H)}$.
		Let $H$ and $K$ be real separable Hilbert spaces with scalar products $\left\langle \cdot, \cdot \right\rangle_{H}$, $\left\langle \cdot, \cdot \right\rangle_{K}$ and norms $\|\cdot\|_H$, $\|\cdot\|_K$ respectively, we denote by $\mathcal{L}_2(K, H)$ the space of Hilbert-Schmidt operators from $K$ to $H$, which is a separable Hilbert space  endowed with the norm $\|\cdot\|_{\mathcal{L}_2}$ and the inner product 
		\begin{equation}\nonumber
			\left\langle A, B \right\rangle_{2} := \mathrm{Tr}\left(B^{*}A\right), \quad A, B \in \mathcal{L}_2(K,H),
		\end{equation}
        where $B^*\in \mathcal{L}_2(H,K)$ denotes the adjoint operator of $B$.

Let $(\Omega,\mathcal F,\{\mathcal F_t\}_{t\geqslant0}, P)$ be a complete filtered probability space
satisfying the usual conditions, and let
$W=\{W(t)\}_{t\geqslant0}$ be an $m$-dimensional cylindrical Brownian motion on $K = \mathbb{R}^m$,
adapted to $\{\mathcal F_t\}_{t\geqslant0}$.

For a sub-$\sigma$-algebra $\mathcal G\subset\mathcal F$, we define
\begin{itemize}
    \item $L^2(\mathcal G;H)$: the space of $\mathcal G$-measurable $H$-valued random variables $\xi$
    such that
    \[
    \mathbb E\big[\|\xi\|_H^2\big]<\infty.
    \]
\end{itemize}

For an interval $I\subset\mathbb R$, we define
\begin{itemize}
 \item $L^2(I;H)$: the space of measurable $H$-valued functions $x(\cdot)$ such that
    \[
    \int_{I} \|x(s)\|_H^2\,ds<\infty.
    \]
    
    \item $\mathcal M_{\mathcal F}^2(I;H)$: the space of progressively measurable $H$-valued processes
    $X(\cdot)$ such that
    \[
    \mathbb E\left[\int_I \|X(t)\|_H^2\,dt\right]<\infty.
    \]

    \item $\mathcal M_{\mathcal F}^{2,\beta}(I;H)$: for $\beta>0$, the exponentially weighted space of
    progressively measurable $H$-valued processes $X(\cdot)$ such that
    \[
    \mathbb E\left[\int_I e^{\beta t}\|X(t)\|_H^2\,dt\right]<\infty.
    \]

    \item $\mathcal S_{\mathcal F}^2(I;H)$: the space of continuous, $\mathcal F$-adapted $H$-valued
    processes $X(\cdot)$ such that
    \[
    \mathbb E\left[\sup_{t\in I}\|X(t)\|_H^2\right]<\infty.
    \]
\end{itemize}

For a continuous interval $I$ with endpoints $-\infty \leqslant a < b \leqslant +\infty$, our definitions above cover various types of interval, exemplified by $L^2(-\infty, 0; H)$ with $I = (-\infty, 0]$, $\mathcal{S}_{\mathcal{F}}^2(0, T; H)$ with $I = [0, T]$, and $\mathcal{M}_{\mathcal{F}}^{2, \beta}(T, \infty; H)$ with $I = [T, \infty)$. 
\\
\\
\indent
Next, we introduce the fading-memory path space $C_\lambda((-\infty, T]; H)$ to analyze infinite-delay effects. 
\begin{definition}\label{def: fading-memory path space}
    Fixing $\lambda>0$ and $T>0$, we define 
		\[
		C_\lambda((-\infty,T];H)
		:=\Big\{x\in C((-\infty,T];H):
		\|x\|_{\lambda,T}:=\sup_{\theta\leqslant T}e^{\lambda \theta}\|x(\theta)\|_H<\infty,\ 
		\lim_{\theta\to-\infty}e^{\lambda\theta}x(\theta)=0\Big\},
		\]
		where $C((-\infty,T];H)$ denotes the family of continuous functions from $(-\infty,T]$ to $H$. This space will be used as the phase space of processes with infinite delay. We refer to Section~\ref{section 5.1} for more detailed properties and the associated non-anticipative path operators acting on this phase space.
\end{definition}

\medskip
Moreover, we introduce the It\^o's formula on Hilbert space which will be used in the following chapters.

\begin{lemma}[It\^o's formula on Hilbert space]
Consider an $H$-valued stochastic process $\{X(t)\}_{t \in [0,T]}$ satisfying the following stochastic differential equation:
\begin{equation*}
X(t) = X(0) + \int_0^t b(s) ds + \int_0^t \sigma(s) dW(s), \quad t \in [0,T],
\end{equation*}
where $b \in \mathcal{M}_{\mathcal{F}}^2(0,T; H)$ and $\sigma \in \mathcal{M}_{\mathcal{F}}^2(0,T; \mathcal{L}_2(\mathbb{R}^m,H))$. Let $\pi: [0,T] \times H \to \mathbb{R}$ be a function of class $C^{1,2}$, that is, $\pi$ is continuously differentiable in time $t$ and Fr\'echet differentiable of order 1 and 2 with respect to $x\in H$ with first and second order Fr\'echet derivatives
\[
D\pi(t,x) \in H, \quad D^2\pi(t,x)\in \mathcal{L}(H,H).
\]
Then, for all $t \in [0,T]$, the following formula holds $P$-a.s.
\begin{equation}\label{ito on scalar function}
\begin{aligned}
\pi(t, X(t)) = & \pi(0, X(0)) + \int_0^t \left[ \frac{\partial \pi}{\partial s}(s, X(s)) + \langle D \pi(s, X(s)), b(s) \rangle_H \right. \\
& \left. + \frac{1}{2} \mathrm{Tr}\left( \sigma(s) \sigma^*(s) D^2 \pi(s, X(s)) \right) \right] ds + \int_0^t \langle D\pi(s, X(s)), \sigma(s) dW(s) \rangle_H.
\end{aligned}
\end{equation}
Furthermore, let $Y(\cdot), Z(\cdot)$ be two $H$-valued It\^o processes with diffusion coefficients $\sigma_Y(t), \sigma_Z(t)$, respectively. Then, for all $t \in [0,T]$, the following product It\^o's formula holds $P$-a.s.
\begin{equation}\label{ito on 2 product}
d\langle Y(t), Z(t) \rangle_H = \langle dY(t), Z(t) \rangle_H + \langle Y(t), dZ(t) \rangle_H + \langle\sigma_Y(t) ,\sigma_Z(t)\rangle_2 dt.
\end{equation}
\end{lemma}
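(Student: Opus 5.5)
The plan is a Taylor expansion along a sequence of partitions, reducing the infinite-dimensional case to the one-dimensional It\^o calculus for real semimartingales.

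\textbf{Reductions.} First localize. Introduce the stopping times
\[
\tau_N:=\inf\Big\{t\in[0,T]:\|X(t)\|_H+\int_0^t\|b(s)\|_H^2\,ds+\int_0^t\|\sigma(s)\|_{\mathcal L_2}^2\,ds\geqslant N\Big\}\wedge T.
\]
Since the pathwise continuity of $X$ follows from $b\in\mathcal M^2_{\mathcal F}$ and from the continuity of the stochastic integral, $\tau_N\uparrow T$ a.s., so it suffices to prove the formula at $t\wedge\tau_N$. On $[0,\tau_N]$ the path of $X$ stays in a bounded set. We also replace $\pi$ by $\pi\chi_N$, where $\chi_N$ is a smooth cutoff. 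This is only slightly delicate in $H$, because $\|x\|^2$ is smooth, so we take $\chi_N=\rho(\|x\|_H^2)$ with $\rho$ smooth. Alternatively, we keep $\pi$ and use the uniform continuity of $\partial_t\pi$, $D\pi$ and $D^2\pi$ on the compact set $\{(s,X(s,\omega))\}$, which is compact for each $\omega$ because $X(\cdot,\omega)$ is continuous.

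\textbf{Taylor expansion.} Fix a partition $0=t_0<\dots<t_n=t$ with mesh tending to $0$. Write $\Delta_k X=X(t_{k+1})-X(t_k)$ and telescope:
\[
\pi(t,X(t))-\pi(0,X(0))=\sum_k\big[\pi(t_{k+1},X(t_{k+1}))-\pi(t_k,X(t_{k+1}))\big]+\sum_k\big[\pi(t_k,X(t_{k+1}))-\pi(t_k,X(t_k))\big].
\]
The first sum is handled by the mean value theorem in $t$ and converges to $\int_0^t\partial_s\pi\,ds$ by uniform continuity. For the second sum we use the second-order Taylor formula in $H$:
\[
\langle D\pi(t_k,X(t_k)),\Delta_kX\rangle_H+\tfrac12\langle D^2\pi(t_k,X(t_k))\Delta_kX,\Delta_kX\rangle_H+r_k,
\]
where $|r_k|\leqslant\varepsilon_k\|\Delta_kX\|_H^2$ and $\varepsilon_k\to0$ uniformly, by continuity of $D^2\pi$.

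\textbf{Identification of the terms.}
\begin{itemize}
\item The first-order sum splits into $\sum\langle D\pi,\int b\rangle$, which gives the Riemann–Lebesgue-type limit $\int\langle D\pi,b\rangle ds$, and $\sum\langle D\pi(t_k,X(t_k)),\int_{t_k}^{t_{k+1}}\sigma dW\rangle$. The latter is the stochastic integral of the elementary integrand $\sigma^*(s)D\pi(t_k,X(t_k))$, and it converges in probability to $\int\langle D\pi,\sigma dW\rangle$ by It\^o isometry and continuity.
\item The remainder satisfies $\sum|r_k|\leqslant\max\varepsilon_k\sum\|\Delta_kX\|^2\to0$, because the quadratic variation is bounded in probability.
\end{itemize}

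\textbf{Main obstacle.} The key step is the second-order sum. The cross terms involving $b$ vanish, being $O(\text{mesh}^{1/2})$ times bounded quantities. For the pure martingale part $M_k=\int_{t_k}^{t_{k+1}}\sigma\,dW$, expand in the orthonormal basis $e_1,\dots,e_m$ of $K=\mathbb R^m$. This gives $M_k=\sum_j\int\sigma e_j\,dW^j$, and
\[
\langle D^2\pi M_k,M_k\rangle=\sum_{i,j}\Big\langle D^2\pi\int\sigma e_i dW^i,\int\sigma e_jdW^j\Big\rangle.
\]
We then show, by an $L^2$ computation identical to the finite-dimensional proof, that
\[
\sum_k\Big(\langle D^2\pi(t_k)M_k,M_k\rangle-\int_{t_k}^{t_{k+1}}\mathrm{Tr}\big(\sigma\sigma^*D^2\pi(t_k)\big)ds\Big)\to0 .
\]
The summands are martingale differences, and their variance is controlled by $\mathbb E\sum\|M_k\|^4\lesssim\mathbb E\sum(\int_{t_k}^{t_{k+1}}\|\sigma\|_{\mathcal L_2}^2)^2\to0$ after localization. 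Note that $\mathrm{Tr}(\sigma\sigma^*D^2\pi)=\sum_j\langle D^2\pi\,\sigma e_j,\sigma e_j\rangle$ is finite since $K$ is finite-dimensional; this is exactly where the trace-class issue disappears. Passing to the limit along a subsequence converging a.s., and then letting $N\to\infty$, yields the first formula \eqref{ito on scalar function}.

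\textbf{Product formula.} The product formula \eqref{ito on 2 product} follows by applying the first formula to the pair $(Y,Z)\in H\times H$ with $\pi(y,z)=\langle y,z\rangle_H$. Here $D\pi=(z,y)$, and $D^2\pi$ is the swap operator, whose trace term equals $\sum_j\langle\sigma_Ye_j,\sigma_Ze_j\rangle=\langle\sigma_Y,\sigma_Z\rangle_2$.
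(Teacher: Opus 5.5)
The paper gives no proof of this lemma. It cites Curtain and Falb: the scalar formula is taken as the real-valued case of their Corollary 3.34, and the product rule comes from applying their Theorem 3.8 to the bilinear functional $(y,z)\mapsto\langle y,z\rangle_H$. Your derivation of the product rule is exactly that idea carried out by hand: the scalar formula on $H\times H$ with $D^2\pi$ the swap operator. Your computation of the trace term as $\langle\sigma_Y,\sigma_Z\rangle_2$ is correct.

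For the scalar formula you instead give a self-contained partition/Taylor proof. This is a genuinely different and sound route, and it buys transparency of hypotheses. Because you work with the compact path image $\{(s,X(s,\omega))\}$, you only need continuity of $\partial_t\pi$, $D\pi$ and $D^2\pi$ (the last in operator norm). Hilbert-space It\^o formulas are usually stated under the stronger requirement of uniform continuity on bounded sets. The citation is short, but it inherits whatever hypotheses Curtain--Falb impose, and the paper never checks these against its own uses.

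A small correction to your commentary: finiteness of $K=\mathbb R^m$ is a convenience, not what makes the trace finite. $\sigma\sigma^*$ is trace class for every Hilbert--Schmidt $\sigma$.

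One step needs tightening, precisely because bounded sets in $H$ are not compact. A continuous $D^2\pi$ need not be bounded on a ball, so the cutoff $\rho(\|x\|_H^2)$ does not produce bounded derivatives. Your $\tau_N$ only controls $\|X\|_H$ and the integrals of $b$ and $\sigma$. Yet two parts of your main estimate need $\|A_k\|$, with $A_k=D^2\pi(t_k,X(t_k))$, bounded uniformly in $\omega$:
\begin{itemize}
\item the conditional centering of $\langle A_kM_k,M_k\rangle$;
\item the variance bound $\mathbb E[\xi_k^2]\lesssim\mathbb E\bigl[\|A_k\|^2\bigl(\|M_k\|_H^4+(\int_{t_k}^{t_{k+1}}\|\sigma\|_{\mathcal L_2}^2ds)^2\bigr)\bigr]$.
\end{itemize}
The pathwise compactness argument only gives $\omega$-dependent bounds. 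The same issue affects $D\pi$ in your It\^o-isometry step for the first-order stochastic sum.

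The remedy is to add $\sup_{r\leqslant s}\bigl(\|D\pi(r,X(r))\|_H+\|D^2\pi(r,X(r))\|_{\mathcal L(H,H)}\bigr)$ to the quantity defining $\tau_N$. This is legitimate since these are continuous adapted processes, and with it your martingale-difference estimate goes through as written.

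Note also that the Taylor intermediate points lie on the segments $[X(t_k),X(t_{k+1})]$, not on the path itself. So you need the standard fact that continuity is uniform near a compact set. Let $\mathcal K$ be the path image. Then for every $\varepsilon>0$ there is $\delta>0$ such that $\|D^2\pi(s,z)-D^2\pi(s,y)\|<\varepsilon$ whenever $(s,y)\in[0,T]\times\mathcal K$ and $\|z-y\|_H<\delta$.
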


These infinite-dimensional It\^o's formulas are justified by the work of Curtain and Falb \cite{Itolemma}. Formula (\ref{ito on scalar function}) is the real-valued case of Corollary 3.34 in \cite{Itolemma}, and the product rule (\ref{ito on 2 product}) follows directly by applying their main Theorem 3.8 to the bilinear inner product functional.

		\subsection{Infinitely delayed stochastic evolution equations.}
		Throughout this paper, $T>0$ is fixed, and for any process $\Gamma(\cdot)$, we write $\Gamma(t)$ for its value at time $t$, and 
		\[
		\Gamma_t := \Big\{\Gamma(t-\theta), 0 \leqslant \theta < +\infty \Big\}, \quad 
		\Gamma_{t+} := \Big\{ \Gamma(t+\theta), 0 \leqslant \theta < +\infty \Big\}
		\]
		to denote its past and future trajectories, respectively. 
		We consider the following infinitely delayed stochastic evolution equation (SEE):
		\begin{equation}\label{ISDDE without control}
			\left\{
			\begin{aligned}
				dX(t) &= b(t,X_t)\,dt + \sigma(t,X_t)\,dW(t), \qquad t\in[0,T],\\
				X(t) &= \gamma(t), \quad t\in (-\infty, 0],
			\end{aligned}
			\right.
		\end{equation}
		where the initial data $X_0 = \gamma_0 = \left\{\gamma(\theta), -\infty < \theta \leqslant 0\right\} \in L^2(\mathcal{F}_0;C_\lambda((-\infty,0];H))$ such that $\gamma(\cdot)\in \mathcal{M}_\mathcal{F}^2(-\infty, 0 ; H)$, and 
		the coefficients
		\[
		b:[0,T]\times \Omega\times C_\lambda((-\infty,T];H)\to H,\qquad 
		\sigma:[0,T]\times \Omega\times C_\lambda((-\infty,T];H)\to \mathcal{L}_2(\mathbb{R}^m,H)
		\]
		are progressively measurable in $(t,\omega)$ for each phase path in $C_\lambda((-\infty,T];H)$.
	\begin{definition} \label{definition X}
				An $\mathcal{F}$-adapted continuous process $X(\cdot)$ on $(-\infty,T]$ is called a solution to the infinitely delayed SEE
		(\ref{ISDDE without control}) if (\ref{ISDDE without control}) holds $P$-a.s. and $X_t\in C_\lambda((-\infty,t];H)$ for all $t\in[0,T]$.
	\end{definition}

\begin{remark}
    An $H$-valued process $X(\cdot)$ defined on $(-\infty,T]$ has its trajectories $X_t$ in $C_\lambda((-\infty,t];H)$ whenever $X(\cdot)$ has continuous paths and $\sup_{s\leqslant t}e^{\lambda(s-t)}\|X(s)\|_H<\infty$. 
\end{remark}
		\medskip

		In order to attain the well-posedness of SEE (\ref{ISDDE without control}), we propose the following conditions:
		\begin{assumption}\label{ass:ISFDE-Lip}
			(Uniform Lipschitz condition)
			There exists a constant $L_1>0$ such that for all $t\in[0,T]$,
			$x, x^{\prime}\in C_\lambda((-\infty,T];H)$,
			\[
			\|b(t, x)-b(t, x^{\prime})\|_H
			+\|\sigma(t,x)-\sigma(t,x^{\prime})\|_{\mathcal{L}_2}
			\leqslant L_1 \|x-x^{\prime}\|_{\lambda,T}.
			\]
		\end{assumption}
		
		\begin{assumption}\label{ass:ISFDE-int}
			\[
			\mathbb{E}\left[\int_0^T \|b(t,0)\|_H^2 dt\right]
			+\mathbb{E}\left[\int_0^T \|\sigma(t,0)\|_{\mathcal{L}_2}^2dt\right] <\infty.
			\]
		\end{assumption}
		
We start by proving a prior estimate for the solution of SEE (\ref{ISDDE without control}). In this paper, we allow the positive constant $C$ to change from line to line.

\begin{theorem}\label{thm:stability}
	For $i=1,2$, let $(b_{i},\sigma_{i})$ be two sets of coefficients such that Assumptions \ref{ass:ISFDE-Lip}--\ref{ass:ISFDE-int} hold, $\gamma_0^{(i)}\in L^2(\mathcal{F}_0;C_\lambda((-\infty,0];H)) $ be the initial datum. Let  $X^{(i)}(\cdot)$ be the corresponding solutions to SEEs (\ref{ISDDE without control}). Then there exists a constant $C > 0$, depending only on $T$ and $L_1$, such that  
	\begin{align}
		\mathbb{E}\Big[&\sup_{0\leqslant t\leqslant T}\|X^{(1)}(t)-X^{(2)}(t)\|_H^2\Big]
		\leqslant C \bigg\{ \mathbb{E}\Big[\|\gamma^{(1)}_0-\gamma^{(2)}_0\|_{\lambda,T}^2 \Big] \nonumber\\
		&\quad + \mathbb{E}\bigg[\int_0^T 
		\|b_1(t,X^{(2)}_t)-b_2(t,X^{(2)}_t)\|_H^2\,dt \bigg] 
		+ \mathbb{E}\bigg[\int_0^T 
		\|\sigma_1(t,X^{(2)}_t)-\sigma_2(t,X^{(2)}_t)\|_{\mathcal{L}_2}^2\,dt\bigg]\bigg\}. \label{eq:stability}
	\end{align}
\end{theorem}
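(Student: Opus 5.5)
We set $\hat X:=X^{(1)}-X^{(2)}$ on $(-\infty,T]$, so that $\hat X(t)=\gamma^{(1)}(t)-\gamma^{(2)}(t)$ for $t\leqslant 0$. For $t\in[0,T]$,
\[
\hat X(t)=\hat X(0)+\int_0^t\big[b_1(s,X^{(1)}_s)-b_2(s,X^{(2)}_s)\big]ds+\int_0^t\big[\sigma_1(s,X^{(1)}_s)-\sigma_2(s,X^{(2)}_s)\big]dW(s).
\]
We split each increment as $b_1(s,X^{(1)}_s)-b_1(s,X^{(2)}_s)$ plus $\delta b(s):=b_1(s,X^{(2)}_s)-b_2(s,X^{(2)}_s)$, and treat $\sigma$ the same way. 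By Assumption \ref{ass:ISFDE-Lip}, the first part is bounded by $L_1\|\hat X_s\|_{\lambda,T}$, where $\hat X_s$ is viewed as an element of the phase space; concretely this is $\sup_{r\leqslant s}e^{\lambda(r-s)}\|\hat X(r)\|_H$ up to the shift convention. The key observation is that this path norm is controlled by the quantity we want to estimate:
\[
\|\hat X_s\|_{\lambda,T}\leqslant \|\hat\gamma_0\|_{\lambda,T}+\sup_{0\leqslant r\leqslant s}\|\hat X(r)\|_H,
\]
since $e^{\lambda(\cdot)}\leqslant 1$ on the relevant negative range after shifting. The past part is thus bounded by the initial discrepancy, and the recent part by the running supremum.

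Next we define $\Phi(t):=\mathbb E\big[\sup_{0\leqslant r\leqslant t}\|\hat X(r)\|_H^2\big]$. Starting from the integral equation, we use $(a+b+c)^2\leqslant 3(a^2+b^2+c^2)$, Cauchy--Schwarz on the $ds$-integral, and the Burkholder--Davis--Gundy inequality for the Hilbert-space valued stochastic integral, which bounds $\mathbb E\sup\|\int_0^r\cdot\,dW\|_H^2$ by $C\,\mathbb E\int_0^t\|\cdot\|_{\mathcal L_2}^2ds$ via Doob's inequality. This gives
\[
\Phi(t)\leqslant C\Big\{\mathbb E\|\hat\gamma_0\|_{\lambda,T}^2+\mathbb E\!\int_0^T\!\big(\|\delta b\|_H^2+\|\delta\sigma\|_{\mathcal L_2}^2\big)dt+\int_0^t\Phi(s)\,ds\Big\},
\]
where the constant depends only on $T$ and $L_1$. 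Here we also use $\|\hat X(0)\|_H\leqslant\|\hat\gamma_0\|_{\lambda,T}$, which holds because $\theta=0$ lies in the supremum range.

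Gronwall's inequality then yields \eqref{eq:stability}. The only delicate step is to ensure $\Phi(T)<\infty$ beforehand, so that Gronwall applies. We expect this to be the main obstacle. We would handle it by a stopping-time localization, $\tau_n:=\inf\{t:\|\hat X(t)\|_H\geqslant n\}$, running the argument for the stopped process and letting $n\to\infty$ by monotone convergence. Alternatively, we would first derive the same kind of estimate for a single solution against the zero coefficients, using Assumption \ref{ass:ISFDE-int}, which shows each $X^{(i)}$ has finite $\mathbb E\sup$ norm.

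A minor point is the precise identification of the phase-space norm of $X_s$, given that the paper defines $\|\cdot\|_{\lambda,T}$ with sup over $\theta\leqslant T$. We would take the convention that $X_s$ is the path frozen after time $s$, or shifted to end at $s$. In either case the weight factor is at most $1$ on $[0,s]$ (up to a harmless constant $e^{\lambda T}$), and it is at most the initial weighted norm on $(-\infty,0]$. This gives the displayed domination with a constant depending only on $T$ and $\lambda$; since $\lambda$ is fixed, we absorb it into $C$.
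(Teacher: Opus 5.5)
Your proof is correct. It shares the paper's two essential ingredients: the domination $\|\hat X_s\|_{\lambda,T}\leqslant\|\hat\gamma_0\|_{\lambda,T}+\sup_{0\leqslant r\leqslant s}\|\hat X(r)\|_H$, and a final Gronwall step. The paper obtains the domination with the shift convention $X_s(\theta)=X(s+\theta)$, $\theta\leqslant 0$, so no $e^{\lambda T}$ factor and no $\lambda$-dependence of $C$ appear.

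The two proofs differ in how they control the running supremum. The paper applies It\^o's formula to $\|\hat X(t)\|_H^2$ and bounds the drift pairing by Young's inequality. It then controls the scalar martingale $\int_0^t\langle \hat X(s),\Delta\sigma(s)\,dW(s)\rangle_H$ by BDG, which produces a term $\frac14\mathbb E\big[\sup_{0\leqslant s\leqslant T}\|\hat X(s)\|_H^2\big]$ that must be absorbed into the left-hand side. You instead square the integral equation and use Doob's inequality (It\^o isometry) for the $H$-valued stochastic integral, so no absorption is needed.

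Your route is more elementary, and with the stopping-time localization it is fully self-contained. The paper tacitly assumes $\mathbb E\big[\sup_{0\leqslant t\leqslant T}\|\hat X(t)\|_H^2\big]<\infty$, both to absorb the $\frac14$-term and to apply Gronwall, yet Definition~\ref{definition X} does not require it. The paper's It\^o-formula route is the one that still works when direct squaring breaks down, e.g.\ for monotone rather than Lipschitz drifts or with an unbounded operator in the equation (the extension mentioned in the paper's conclusion). Under the Lipschitz hypotheses of this theorem, the two arguments are interchangeable.

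Two small remarks on your localization. First, take $\tau_n:=\inf\{t\geqslant 0:\|\hat X(t)\|_H\geqslant n\}\wedge T$. Then $\sup_{0\leqslant r\leqslant \tau_n}\|\hat X(r)\|_H\leqslant \max\{n,\|\hat\gamma_0\|_{\lambda,T}\}$, which is square integrable, so each stopped $\Phi_n$ is finite. Moreover, $\tau_n=T$ eventually by path continuity, so monotone convergence applies. Second, your alternative of first estimating each $X^{(i)}$ against zero coefficients meets the same finiteness issue, so it still needs this localization.
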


\begin{proof}
	For $t\in(-\infty,T]$, denote $\hat{X}(t):=X^{(1)}(t)-X^{(2)}(t)$, which  satisfies
	\begin{equation*}
		\hat{X}(t)=\hat{X}(0)+\int_0^t \big(b_1(s,X^{(1)}_s)-b_2(s,X^{(2)}_s)\big)\,ds
		+\int_0^t \big(\sigma_1(s,X^{(1)}_s)-\sigma_2(s,X^{(2)}_s)\big)\,dW(s),
	\end{equation*}
	with initial data $\hat{X}(t)=\gamma^{(1)}(t)-\gamma^{(2)}(t)$ for $t\leqslant 0$.
	Applying the Hilbert-space It\^{o}'s formula (\ref{ito on scalar function}) to $\|\hat{X}(t)\|_H^2$ on $[0,T]$ yields
	\begin{align}
		\|\hat{X}(t)\|_H^2 - \|\hat{X}(0)\|_H^2 =&
		2\int_0^{t}\langle \hat{X}(s), b_1(s,X^{(1)}_s)-b_2(s,X^{(2)}_s)\rangle_H ds \nonumber \\
		&+2\int_0^{t}\langle \hat{X}(s),\big[\sigma_1(s,X^{(1)}_s)-\sigma_2(s,X^{(2)}_s)\big]dW(s)\rangle_H \nonumber\\		&+\int_0^{t}\|\sigma_1(s,X^{(1)}_s)-\sigma_2(s,X^{(2)}_s)\|_{\mathcal{L}_2}^2ds,  \label{eq:Ito-local}
	\end{align}
	Note that for all $s \in [0, T]$,
	\begin{align}
	\|X^{(1)}_s-X^{(2)}_s\|_{\lambda,T}
	\leqslant& \sup_{\theta\leqslant -s} e^{\lambda\theta}
	\|X^{(1)}(s+\theta)-X^{(2)}(s+\theta)\|_H\nonumber \\
    &+ \sup_{-s\leqslant\theta\leqslant 0} e^{\lambda\theta}
	\|X^{(1)}(s+\theta)-X^{(2)}(s+\theta)\|_H \nonumber \\
	\leqslant & \|\gamma^{(1)}_0-\gamma^{(2)}_0\|_{\lambda,T} + \sup_{0\leqslant r \leqslant s}\|\hat X(r)\|_H \nonumber,
	\end{align}
	Then by Assumption \ref{ass:ISFDE-Lip}, the Cauchy--Schwarz inequality and Young's inequality
	\begin{align}
		2\int_0^t& \langle \hat X(s), b_1(s,X^{(1)}_s)-b_2(s,X^{(2)}_s)\rangle_H ds \nonumber \\
		& \leqslant \int_0^t \left(\|\hat X(s)\|^2_H + \|b_1(s,X^{(1)}_s)-b_2(s,X^{(2)}_s)\|^2_H\right)ds \nonumber \\
		&\leqslant TL_{1}\|\gamma_0^{(1)}-\gamma_0^{(2)}\|_{\lambda,T}^2 + 
		(L_1 + 1)\int_0^t \sup_{0\leqslant r\leqslant s}\|\hat X(r)\|_H^2 ds \nonumber \\
		&\quad + L_1 \int_0^t
		\|b_1(s,X^{(2)}_s)-b_2(s,X^{(2)}_s)\|_H^2 ds . \label{eq:drift-est}
	\end{align}
	Similarly, we can derive
		\begin{align}
		\int_0^{t}\|\sigma_1(s,X^{(1)}_s)-\sigma_2(s,X^{(2)}_s)\|_{\mathcal{L}_2}^2ds 
		\leqslant& TL_{1}\|\gamma_0^{(1)}-\gamma_0^{(2)}\|_{\lambda,T}^2 + 
		L_1\int_0^t \sup_{0\leqslant r\leqslant s}\|\hat X(r)\|_H^2 ds \nonumber \\
		& + L_1 \int_0^t
		\|\sigma_1(s,X^{(2)}_s)-\sigma_2(s,X^{(2)}_s)\|_{\mathcal{L}_2}^2 ds . \label{eq:diffusion-est}
	\end{align}
	Moreover, applying the Burkholder--Davis--Gundy inequality, we obtain
	\begin{align}
	\mathbb{E}&\bigg[\sup_{0\leqslant t\leqslant T}
	\int_0^t\!\!\langle \hat X(s),
	[\sigma_1(s,X^{(1)}_s)-\sigma_2(s,X^{(2)}_s)]\,dW(s)\rangle_H\bigg] \nonumber\\
	& \leqslant C\mathbb{E}\Big(
	\int_0^T \|\hat X(s)\|_H^2
	\|\sigma_1(s,X^{(1)}_s)-\sigma_2(s,X^{(2)}_s)\|_{\mathcal{L}_2}^2 ds
	\Big)^{1/2} \nonumber \\
	&\leqslant \frac{1}{4}\mathbb{E}\bigg[\sup_{0\leqslant s\leqslant T}\|\hat X(s)\|_H^2\bigg]
	+ C\mathbb{E}\bigg[\int_0^T
	\|\sigma_1(s,X^{(1)}_s)-\sigma_2(s,X^{(2)}_s)\|_{\mathcal{L}_2}^2 ds \bigg].
	\label{eq:BDG}
	\end{align}
	Taking supremum over $t\in[0,\tau]$ for $\tau \in (0, T]$, taking expectation on both sides of \eqref{eq:Ito-local}, and combining \eqref{eq:drift-est}, \eqref{eq:diffusion-est}
	and \eqref{eq:BDG}, we obtain
	\begin{align*}
		\mathbb{E}\bigg[\sup_{0\leqslant t\leqslant \tau}\|\hat X(t)\|_H^2\bigg]\leqslant
		&C \mathbb{E}\bigg[\|\gamma_0^{(1)}-\gamma_0^{(2)}\|_{\lambda,T}^2\bigg]
		+ C\int_0^{\tau} \mathbb{E}\bigg[\sup_{0\leqslant r\leqslant s}\|\hat X(r)\|_H^2\bigg] ds  \\
		& +C\mathbb{E}\bigg[\int_0^{\tau}
		\|b_1(s,X^{(2)}_s)-b_2(s,X^{(2)}_s)\|_H^2 ds\bigg] \\
		& +C\mathbb{E}\bigg[\int_0^{\tau}
		\|\sigma_1(s,X^{(2)}_s)-\sigma_2(s,X^{(2)}_s)\|_{\mathcal{L}_2}^2 ds \bigg].
	\end{align*}
	By Gr\"onwall's inequality, the desired stability estimate (\ref{eq:stability}) follows.
\end{proof}

\begin{corollary}\label{cor:apriori}
	Suppose Assumptions \ref{ass:ISFDE-Lip}--\ref{ass:ISFDE-int} hold, let $X(\cdot)$ be the solution to SEE (\ref{ISDDE without control}) in the sense of Definition \ref{definition X}. Then for any given initial data $\gamma_0 \in L^2(\mathcal{F}_0;C_\lambda((-\infty,0];H))$ such that $\gamma(\cdot)\in \mathcal{M}_\mathcal{F}^2(-\infty, 0 ; H)$, there exists a constant $C>0$, depending only on $T$ and $L_1$, such that
	\begin{equation}\label{a priori est SEE}
	    	\mathbb{E}\Big[\sup_{-\infty < t\leqslant T}\|X(t)\|_H^2\Big]
	\leqslant C\left\{
	\mathbb{E}\Big[\|\gamma_0\|_{\lambda,T}^2\Big]
	+\mathbb{E}\Big[\int_0^T \|b(t,0)\|_H^2\,dt\Big]
	+\mathbb{E}\Big[\int_0^T \|\sigma(t,0)\|_{\mathcal{L}_2}^2\,dt\Big]
	\right\}.
	\end{equation}
	Moreover, we have $X(\cdot) \in \mathcal{S}_\mathcal{F}^2(-\infty, T ; H)$ and 
    \[
    \left\{b\left(t, X_t\right)\right\}_{t \in[0, T]} \in \mathcal{M}_{\mathcal{F}}^2(0, T ; H),\quad \left\{\sigma\left(t, X_t\right)\right\}_{t \in[0, T]}\in \mathcal{M}_{\mathcal{F}}^2\left(0, T ; \mathcal{L}_2(\mathbb{R}^m,H) \right).
    \]
\end{corollary}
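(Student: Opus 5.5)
The plan is to obtain \eqref{a priori est SEE} as a special case of the stability estimate in Theorem~\ref{thm:stability}, comparing $X$ with the trivial solution. Set $(b_1,\sigma_1)=(b,\sigma)$ and $\gamma^{(1)}_0=\gamma_0$, so that $X^{(1)}=X$. Set $(b_2,\sigma_2)\equiv(0,0)$ and $\gamma^{(2)}_0\equiv 0$. The zero coefficients trivially satisfy Assumptions~\ref{ass:ISFDE-Lip}--\ref{ass:ISFDE-int}, and the corresponding solution is $X^{(2)}\equiv 0$ on $(-\infty,T]$, whose segments $X^{(2)}_t=0$ lie in $C_\lambda$. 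With these choices $b_1(t,X^{(2)}_t)-b_2(t,X^{(2)}_t)=b(t,0)$, and similarly $\sigma_1(t,X^{(2)}_t)-\sigma_2(t,X^{(2)}_t)=\sigma(t,0)$. Hence \eqref{eq:stability} gives immediately
\[
\mathbb{E}\Big[\sup_{0\leqslant t\leqslant T}\|X(t)\|_H^2\Big]\leqslant C\Big\{\mathbb{E}\big[\|\gamma_0\|_{\lambda,T}^2\big]+\mathbb{E}\int_0^T\|b(t,0)\|_H^2dt+\mathbb{E}\int_0^T\|\sigma(t,0)\|_{\mathcal L_2}^2dt\Big\},
\]
with $C$ depending only on $T$ and $L_1$.

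Next I split $\sup_{-\infty<t\leqslant T}\|X(t)\|_H^2$ into the supremum over $(-\infty,0]$ and the supremum over $[0,T]$. On $(-\infty,0]$ we have $X=\gamma$, so that piece is the size of the initial history. In the paper's convention the initial datum is measured by $\|\gamma_0\|_{\lambda,T}$, which is exactly the quantity the proof of Theorem~\ref{thm:stability} uses to control the pre-$0$ part of every segment $X_s$. I therefore bound $\mathbb{E}[\sup_{t\leqslant 0}\|\gamma(t)\|_H^2]$ by the initial-datum term $\mathbb{E}[\|\gamma_0\|_{\lambda,T}^2]$, enlarging $C$ if necessary. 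Adding the two pieces yields \eqref{a priori est SEE}. Finiteness of the right-hand side, from $\gamma_0\in L^2(\mathcal F_0;C_\lambda)$ and Assumption~\ref{ass:ISFDE-int}, together with continuity and adaptedness of $X$, then gives $X\in\mathcal S^2_{\mathcal F}(-\infty,T;H)$.

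For the integrability of the coefficients, I use Assumption~\ref{ass:ISFDE-Lip} with $x'=0$ to get $\|b(t,X_t)\|_H\leqslant\|b(t,0)\|_H+L_1\|X_t\|_{\lambda,T}$. The segment estimate from the proof of Theorem~\ref{thm:stability} gives $\|X_t\|_{\lambda,T}\leqslant\|\gamma_0\|_{\lambda,T}+\sup_{0\leqslant r\leqslant t}\|X(r)\|_H$. Squaring, integrating over $[0,T]$ and taking expectations, the bound just proved shows $\{b(t,X_t)\}\in\mathcal M^2_{\mathcal F}(0,T;H)$. The same argument applies to $\sigma$ in the $\mathcal L_2$ norm. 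Progressive measurability follows from progressive measurability of $b,\sigma$ in $(t,\omega)$ combined with the continuity of $t\mapsto X_t$ in $C_\lambda$.

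The step I expect to be the main obstacle is the control of the history part $\sup_{t\leqslant 0}\|\gamma(t)\|_H$ by the initial-datum norm. The weight $e^{\lambda\theta}$ is at most $1$ for $\theta\leqslant 0$, so this step relies on reading $\|\gamma_0\|_{\lambda,T}$ as the paper's measure of the full initial path. The rest is a direct specialization of Theorem~\ref{thm:stability}.
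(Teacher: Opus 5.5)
Your specialization of Theorem~\ref{thm:stability} with $(b_2,\sigma_2)\equiv(0,0)$, $\gamma^{(2)}_0\equiv 0$, $X^{(2)}\equiv 0$ is exactly how the corollary is meant to follow. The paper gives no separate argument. This step correctly yields the bound for $\mathbb{E}[\sup_{0\leqslant t\leqslant T}\|X(t)\|_H^2]$.

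\textbf{The gap.} It is the step you flagged yourself: bounding $\mathbb{E}[\sup_{t\leqslant 0}\|\gamma(t)\|_H^2]$ by $C\,\mathbb{E}[\|\gamma_0\|_{\lambda,T}^2]$. This is not a matter of how to read the norm; the inequality runs the wrong way. The weight $e^{\lambda\theta}\leqslant 1$ on $\theta\leqslant 0$ only gives $e^{\lambda\theta}\|\gamma(\theta)\|_H\leqslant\|\gamma(\theta)\|_H$, and no constant reverses it.

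\textbf{Counterexample.} Take $b\equiv 0$, $\sigma\equiv 0$, a unit vector $e\in H$, $R\geqslant 2$, and the deterministic tent $\gamma(\theta)=(1-|\theta+R|)^+e$.
\begin{itemize}
\item The hypotheses hold: $\gamma_0\in C_\lambda((-\infty,0];H)$, $\gamma\in\mathcal M^2_{\mathcal F}(-\infty,0;H)$ and $\gamma(0)=0$.
\item The solution is $X=\gamma$ on $(-\infty,0]$ and $X\equiv 0$ on $[0,T]$.
\item The left side of \eqref{a priori est SEE} equals $1$.
\item The right side is at most $Ce^{-2\lambda(R-1)}$, which tends to $0$ as $R\to\infty$.
\end{itemize}
Superposing disjoint tents of height $e^{\lambda n/2}$ and half-width $e^{-2\lambda n}$ centred at $-n$, $n\geqslant 2$, gives admissible data. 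It is continuous, lies in $L^2$, and satisfies $e^{\lambda\theta}\gamma(\theta)\to 0$, yet $\sup_{t\leqslant 0}\|\gamma(t)\|_H=\infty$. So the claim $X\in\mathcal S^2_{\mathcal F}(-\infty,T;H)$ also fails under the stated hypotheses. The supremum over $(-\infty,T]$ cannot be obtained from these assumptions by any argument.

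\textbf{What you can prove instead.} Any of the following is legitimate:
\begin{itemize}
\item The estimate with $\sup_{0\leqslant t\leqslant T}$ on the left, which is all Theorem~\ref{thm:stability} delivers.
\item A weighted version, $\mathbb{E}[\sup_{t\leqslant T}e^{2\lambda t}\|X(t)\|_H^2]\leqslant C\{\cdots\}$, with $C$ now depending also on $\lambda$. This holds because $\sup_{t\leqslant 0}e^{2\lambda t}\|\gamma(t)\|_H^2\leqslant\|\gamma_0\|_{\lambda,T}^2$ and $\sup_{0\leqslant t\leqslant T}e^{2\lambda t}\|X(t)\|_H^2\leqslant e^{2\lambda T}\sup_{0\leqslant t\leqslant T}\|X(t)\|_H^2$.
\item The original estimate with $\mathbb{E}[\sup_{t\leqslant 0}\|\gamma(t)\|_H^2]$ added to the right-hand side, assumed finite.
\end{itemize}

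The rest of your proposal survives under any of these corrections. The integrability of $b(t,X_t)$ and $\sigma(t,X_t)$ uses only three things: the Lipschitz bound with $x'=0$, the segment estimate $\|X_t\|_{\lambda,T}\leqslant\|\gamma_0\|_{\lambda,T}+C\sup_{0\leqslant r\leqslant t}\|X(r)\|_H$, and the $[0,T]$ estimate. In the Picard argument of Theorem~\ref{thm:ISFDE-wellposed}, all iterates share the same history $\gamma$, so only the $[0,T]$ part is ever needed there.
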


	\medskip

Now, we give the well-posedness of SEE \eqref{ISDDE without control}.
		\begin{theorem}\label{thm:ISFDE-wellposed}
			Suppose Assumptions \ref{ass:ISFDE-Lip}-\ref{ass:ISFDE-int} hold.
			Then for any given initial data $\gamma_0 \in L^2(\mathcal{F}_0$; $C_\lambda((-\infty,0];H))$ such that $\gamma(\cdot)\in \mathcal{M}_\mathcal{F}^2(-\infty, 0 ; H)$, the SEE \eqref{ISDDE without control} admits a unique solution $X(\cdot)$ in the sense of Definition \ref{definition X}.
		\end{theorem}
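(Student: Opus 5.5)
Uniqueness follows at once from Theorem~\ref{thm:stability}. Take $b_1=b_2=b$, $\sigma_1=\sigma_2=\sigma$ and $\gamma^{(1)}_0=\gamma^{(2)}_0=\gamma_0$. Then the right-hand side of \eqref{eq:stability} vanishes, so any two solutions agree on $[0,T]$ almost surely. They also agree on $(-\infty,0]$ by the initial condition, and continuity of paths gives indistinguishability.

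For existence I will use a Picard iteration, i.e. a contraction argument on a suitable space. Let $\mathcal{X}$ be the set of continuous adapted $H$-valued processes $Y$ on $(-\infty,T]$ with $Y(t)=\gamma(t)$ for $t\leqslant 0$ and $\mathbb{E}[\sup_{0\leqslant t\leqslant T}\|Y(t)\|_H^2]<\infty$. For $Y\in\mathcal{X}$, the remark after Definition~\ref{definition X} shows $Y_t\in C_\lambda((-\infty,t];H)$. Indeed, the tail $\theta\leqslant -t$ is controlled by $\|\gamma_0\|_{\lambda,T}$ and the decay condition at $-\infty$, while the segment $[0,t]$ is compact. Define
\[
\Phi(Y)(t):=\gamma(0)+\int_0^t b(s,Y_s)\,ds+\int_0^t\sigma(s,Y_s)\,dW(s),\qquad t\in[0,T],
\]
with $\Phi(Y)=\gamma$ on $(-\infty,0]$. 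The following facts make $\Phi$ well defined on $\mathcal{X}$:
\begin{itemize}
\item By Assumption~\ref{ass:ISFDE-Lip}, $\|b(s,Y_s)\|_H\leqslant \|b(s,0)\|_H+L_1(\|\gamma_0\|_{\lambda,T}+\sup_{r\leqslant s}\|Y(r)\|_H)$, using the splitting of $\|Y_s\|_{\lambda,T}$ from the proof of Theorem~\ref{thm:stability}.
\item The same bound holds for $\sigma$, with the Hilbert--Schmidt norm.
\item Together with Assumption~\ref{ass:ISFDE-int}, these give square-integrable integrands, and progressive measurability is inherited.
\item The stochastic integral has a continuous version, and the BDG inequality gives $\Phi(Y)\in\mathcal{X}$.
\item Continuity at $t=0$ holds because $\Phi(Y)(0)=\gamma(0)$.
\end{itemize}

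For the contraction I equip $\mathcal{X}$ with the weighted metric $d(Y,Y')^2:=\mathbb{E}[\sup_{0\leqslant t\leqslant T}e^{-\beta t}\|Y(t)-Y'(t)\|_H^2]$ for some large $\beta>0$. Since $Y-Y'$ vanishes on $(-\infty,0]$, we have $\|Y_s-Y'_s\|_{\lambda,T}\leqslant\sup_{0\leqslant r\leqslant s}\|Y(r)-Y'(r)\|_H$. Applying Cauchy--Schwarz to the drift and Doob/BDG to the martingale term gives
\[
\mathbb{E}\Big[\sup_{0\leqslant r\leqslant t}\|\Phi(Y)(r)-\Phi(Y')(r)\|_H^2\Big]\leqslant C(T+1)L_1^2\int_0^t\mathbb{E}\Big[\sup_{0\leqslant r\leqslant s}\|Y(r)-Y'(r)\|_H^2\Big]ds .
\]
Multiplying by $e^{-\beta t}$ then yields $d(\Phi Y,\Phi Y')^2\leqslant \frac{C}{\beta}\,d(Y,Y')^2$, which is a contraction once $\beta$ is large. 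Alternatively, iterating the integral inequality $n$ times gives the factor $(CT)^n/n!$, so some power $\Phi^n$ is a contraction; that would serve equally well.

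The remaining steps are completeness and identification of the limit. Completeness of $\mathcal{X}$ follows since it is a closed affine subset of $\mathcal{S}^2_{\mathcal{F}}(0,T;H)$ extended by the fixed $\gamma$. The fixed point is a continuous adapted process solving \eqref{ISDDE without control} with $X_t\in C_\lambda((-\infty,t];H)$, which is exactly Definition~\ref{definition X}.

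The main obstacle I expect is not the estimate but the functional-analytic bookkeeping. One must check that $s\mapsto b(s,Y_s)$ is progressively measurable, and that $Y_s$ lies in $C_\lambda((-\infty,T];H)$ as the coefficients require. That space is defined on $(-\infty,T]$, so I will identify $Y_s$ with the path stopped at $s$, i.e. constant on $[s,T]$. Progressive measurability should follow because $(s,\omega)\mapsto Y_s$ is continuous into $C_\lambda$ for each $\omega$, again using the $\lambda$-decay of $\gamma_0$ at $-\infty$, and $b$ is jointly measurable and Lipschitz in the path.
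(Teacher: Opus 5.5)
Your proof is, in substance, the paper's: uniqueness from Theorem~\ref{thm:stability} and existence by Picard iteration. Your fallback of iterating the integral inequality to get $(CT)^n/n!$ is exactly the paper's estimate $k^n(T)\leqslant \frac{(CT)^n}{n!}k^0(T)$. Your direct linear-growth/BDG check that $\Phi$ maps $\mathcal{X}$ into itself, together with your measurability remarks, is a better justification than the paper's appeal to Corollary~\ref{cor:apriori}, which is stated for solutions rather than Picard iterates.

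\textbf{The step that fails.} Your primary route, the one-step contraction in the metric $d(Y,Y')^2=\mathbb{E}[\sup_{0\leqslant t\leqslant T}e^{-\beta t}\|Y(t)-Y'(t)\|_H^2]$, does not follow from what you wrote. Your displayed estimate holds for each fixed $t$. Multiplying it by $e^{-\beta t}$ therefore only gives $\sup_{0\leqslant t\leqslant T}e^{-\beta t}\,\mathbb{E}[\sup_{0\leqslant r\leqslant t}\|\Phi(Y)(r)-\Phi(Y')(r)\|_H^2]\leqslant \frac{C}{\beta}d(Y,Y')^2$. There the supremum in $t$ sits outside the expectation, whereas $d(\Phi Y,\Phi Y')^2$ has it inside, and $\mathbb{E}\sup\geqslant\sup\mathbb{E}$ runs the wrong way.

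\textbf{The bound is false for $d$.} Take the scalar case with $b=0$, $\sigma(t,x)=Lx(t)$, and $Y-Y'=e^{\beta t/2}-1$ on $[0,T]$, so that $d(Y,Y')\leqslant 1$. Then $e^{-\beta t/2}(\Phi Y-\Phi Y')(t)$ is essentially an Ornstein--Uhlenbeck process with mean-reversion rate $\beta/2$ and variance $L^2/\beta$. Its expected squared maximum on $[0,T]$ is of order $L^2\log(\beta T)/\beta$ for large $\beta$, so no $\beta$-independent $C$ works.

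\textbf{The repair is easy.} Either use the $n$-fold iteration, as the paper does. Or put the supremum over $t$ outside the expectation with $\rho(Y,Y')^2:=\sup_{0\leqslant t\leqslant T}e^{-\beta t}\,\mathbb{E}[\sup_{0\leqslant r\leqslant t}\|Y(r)-Y'(r)\|_H^2]$. This norm satisfies $e^{-\beta T}\mathbb{E}[\sup_{0\leqslant r\leqslant T}\|Y(r)-Y'(r)\|_H^2]\leqslant\rho(Y,Y')^2\leqslant\mathbb{E}[\sup_{0\leqslant r\leqslant T}\|Y(r)-Y'(r)\|_H^2]$, so completeness is preserved. For $\rho$, your computation yields exactly $\rho(\Phi Y,\Phi Y')^2\leqslant\frac{C}{\beta}\rho(Y,Y')^2$.
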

		
		\begin{proof}
			The uniqueness follows directly from Theorem~\ref{thm:stability}.
			We shall prove the existence by a Picard iteration scheme and show convergence in $\mathcal{S}^2(-\infty,T;H)$.\\
			\noindent
			Set $X^{(0)}(t)=0$ for $t\in[0,T]$ and $X^{(0)}(t)=\gamma(t)$ for $t\leqslant 0$. For $n = 0,1,2,\ldots$, define the Picard sequence recursively by
			\begin{equation} \label{eq:Picard}
							X^{(n+1)}(t)=\gamma(0)+\int_0^t b(s,X^{(n)}_s)\,ds+\int_0^t \sigma(s,X^{(n)}_s)\,dW(s),
				\quad t\in[0,T],
			\end{equation}
			and $X^{(n+1)}(t)=\gamma(t)$ for $t\leqslant 0$. By Corollary \ref{cor:apriori}, we have $\mathbb{E}\big[\sup_{-\infty < t\leqslant T}\|X^{(n)}(t)\|_H^2\big]< +\infty$, and $X^{(n)}_t\in C_\lambda((-\infty,t];H)$ for each $n = 0,1,2,\ldots$, showing that $\big\{X^{(n)}(\cdot) \big\}_{n\geqslant 1}$ is a sequence in $\mathcal{S}_{\mathcal{F}}^2\left(-\infty,T ;H\right)$.
			Let $\Delta X^{(n)}(t):=X^{(n+1)}(t)-X^{(n)}(t)$, which satisfies the following equation
			\begin{align*}
				\Delta X^{(n)}(t)=&\int_{0}^t \Big[b\left(s, X^{(n)}_s\right) - b\left(s, X^{(n-1)}_s\right)\Big]d s\nonumber\\
				&+\int_{0}^t \Big[\sigma\left(s, X^{(n)}_s\right) - \sigma\left(s, X^{(n-1)}_s\right)\Big]d W(s).
			\end{align*}
			Applying It\^{o}'s formula (\ref{ito on scalar function}) to $\|\Delta X^{(n)}(t)\|_H^2$ on $t\in[0,T]$, we obtain
			\begin{align*}
				\|\Delta X^{(n)}(t)\|_H^2
				=&\;
				2\int_0^t
				\Big\langle \Delta X^{(n)}(s),
				b(s,X^{(n)}_s)-b(s,X^{(n-1)}_s)\Big\rangle_H\,ds \\
				&+2\int_0^t
				\Big\langle \Delta X^{(n)}(s),
				\big(\sigma(s,X^{(n)}_s)-\sigma(s,X^{(n-1)}_s)\big)\,dW(s)
				\Big\rangle_H \\
				&+\int_0^t
				\|\sigma(s,X^{(n)}_s)-\sigma(s,X^{(n-1)}_s)\|_{\mathcal{L}_2}^2\,ds .
			\end{align*}
			Taking the supremum over $t\in[0,T]$ and taking expectation yields
			\begin{align*}
				\mathbb{E}\Big[\sup_{0\leqslant t\leqslant T}\|\Delta X^{(n)}(t)\|_H^2\Big]
				\leqslant&
				2\mathbb{E}\bigg[\int_0^T
				\Big|\big\langle \Delta X^{(n)}(s),
				b(s,X^{(n)}_s)-b(s,X^{(n-1)}_s)\big\rangle_H\Big|ds\bigg] \\
				&+2\mathbb{E}\bigg[\sup_{0\leqslant t\leqslant T}
				\Big|\int_0^t
				\Big\langle \Delta X^{(n)}(s),
				(\sigma(s,X^{(n)}_s)-\sigma(s,X^{(n-1)}_s))dW(s)
				\Big\rangle_H\Big|\bigg] \\
				&+\mathbb{E}\bigg[\int_0^T
				\|\sigma(s,X^{(n)}_s)-\sigma(s,X^{(n-1)}_s)\|_{\mathcal{L}_2}^2ds\bigg] .
			\end{align*}
			For the drift coefficient, by Cauchy--Schwarz inequality and Young's inequality,
			\[
			2\Big|\big\langle \Delta X^{(n)}(s),
			b(s,X^{(n)}_s)-b(s,X^{(n-1)}_s)\big\rangle_H\Big|
			\leqslant \frac{1}{4T}\|\Delta X^{(n)}(s)\|_H^2
			+ 4T
			\|b(s,X^{(n)}_s)-b(s,X^{(n-1)}_s)\|_H^2 .
			\]
			For the stochastic integral term, by the Burkholder--Davis--Gundy inequality,
			\begin{align*}
				&\mathbb{E}\bigg[\sup_{0\leqslant t\leqslant T}
				\Big|\int_0^t
				\Big\langle \Delta X^{(n)}(s),
				(\sigma(s,X^{(n)}_s)-\sigma(s,X^{(n-1)}_s))dW(s)
				\Big\rangle_H\Big|\bigg] \\
				&\quad\leqslant
				C\mathbb{E}\Big(
				\int_0^T
				\|\Delta X^{(n)}(s)\|_H^2
				\|\sigma(s,X^{(n)}_s)-\sigma(s,X^{(n-1)}_s)\|_{\mathcal{L}_2}^2ds
				\Big)^{1/2} \\
				&\quad\leqslant
				\frac12\mathbb{E}\bigg[\sup_{0\leqslant s\leqslant T}\|\Delta X^{(n)}(s)\|_H^2\bigg]
				+ C\mathbb{E}\bigg[\int_0^T
				\|\sigma(s,X^{(n)}_s)-\sigma(s,X^{(n-1)}_s)\|_{\mathcal{L}_2}^2ds \bigg].
			\end{align*}
			Combining the above estimates, we obtain		
			\begin{align*}
			    \mathbb{E}&\Big[\sup_{0\leqslant t\leqslant T}\|\Delta X^{(n)}(t)\|_H^2\Big]\\
			&\leqslant C\,\mathbb{E}\bigg[\int_0^T
			\Big(\|b(s,X^{(n)}_s)-b(s,X^{(n-1)}_s)\|_H^2
			+\|\sigma(s,X^{(n)}_s)-\sigma(s,X^{(n-1)}_s)\|_{\mathcal{L}_2}^2\Big)ds \bigg].
			\end{align*}
			where $C>0$ is a constant independent of $n$. By Assumption \ref{ass:ISFDE-Lip} and by the fact that
			\[
			\|X^{(n)}_s-X^{(n-1)}_s\|_{\lambda,T}
			\leqslant \sup_{\theta\leqslant 0}\|X^{(n)}(s+\theta)-X^{(n-1)}(s+\theta)\|_H
			= \sup_{0\leqslant r\leqslant s}\|\Delta X^{(n-1)}(r)\|_H,
			\]
			we obtain
			\begin{equation}\label{eq:Picard-rec}
							\mathbb{E}\Big[\sup_{0\leqslant t\leqslant T}\|\Delta X^{(n)}(t)\|_H^2\Big]
				\leqslant C\int_0^T
				\mathbb{E}\Big[\sup_{0\leqslant r\leqslant s}\|\Delta X^{(n-1)}(r)\|_H^2\Big]\,ds,
			\end{equation}
			for some $C$ depending on $T$ and $L_1$.
			Define $k^n(t):=\mathbb{E}\big[\sup_{0\leqslant s\leqslant t}\|\Delta X^{(n)}(s)\|_H^2\big]$. Then \eqref{eq:Picard-rec} can be rewritten as
			\[
			k^n(T)\leqslant C\int_0^T k^{n-1}(s)ds.
			\]
			Iterating the above inequality yields that
			\[
			k^n(T)\leqslant \frac{(CT)^n}{n!}k^0(T),\quad n\geqslant 1,
			\]
			where we can easily show that $k^0(T)< +\infty$. Hence $\sum_{n\geqslant0}k^n(T)<\infty$ and $\big\{X^{(n)}(\cdot)\big\}_{n \geqslant 1}$ is a Cauchy sequence in $\mathcal{S}^2(-\infty,T;H)$.
			Therefore, there exists $X(\cdot)\in\mathcal{S}^2(-\infty,T;H)$ such that
			\[
			\mathbb{E}\Big[\sup_{0\leqslant t\leqslant T}\|X^{(n)}(t)-X(t)\|_H^2\Big]\rightarrow 0
			\quad\text{as }n\to\infty.
			\]
			Using Assumption \ref{ass:ISFDE-Lip} and the segment bound
			$\|X^{(n)}_t-X_t\|_{\lambda,T}\leqslant \sup_{0\leqslant s\leqslant t}\|X^{(n)}(s)-X(s)\|_H$ again,
			we have
			\[
			\mathbb{E}\Big[\int_0^T \|b(s,X^{(n)}_s)-b(s,X_s)\|_H^2\,ds\Big] \to 0,
			\qquad
			\mathbb{E}\Big[\int_0^T \|\sigma(s,X^{(n)}_s)-\sigma(s,X_s)\|_{\mathcal{L}_2}^2ds\Big] \to 0.
			\]
			Consequently,
			\[
			\int_0^t b(s,X^{(n)}_s)\,ds \to \int_0^t b(s,X_s)\,ds \quad \text{in }L^2(\Omega;H),
			\]
			and, by the It\^{o} isometry,
			\[
			\int_0^t \sigma(s,X^{(n)}_s)\,dW(s) \to \int_0^t \sigma(s,X_s)\,dW(s)\quad \text{in }L^2(\Omega;H),
			\]
			for all $t\in[0,T]$.
			Therefore, passing to the limit as $n\to \infty$ in \eqref{eq:Picard} yields that $X(\cdot)$ solves equation \eqref{ISDDE without control}.
		\end{proof}

\subsection{Infinitely anticipated backward stochastic evolution equations.}
		In this subsection, we study the well-posedness of the following Hilbert-valued infinitely anticipated backward stochastic evolution equation (IABSEE). It will serve as the adjoint equation to derive the maximum principle. 
		\begin{equation} \label{ABSDE without control}
			\left\{\begin{aligned}
				Y(t)=&\xi(T) + \int_{t}^{T}f\left(s, Y_{s+}, Z_{s+} \right) d t
				- \int_{t}^{T}Z(s) d W(s), \quad t \in[0, T]; \\
				Y(t) =&  \xi(t),\quad  Z(t) = \eta(t), \quad t \in[T, +\infty).
			\end{aligned}\right.
		\end{equation}
		where $\xi(\cdot) \in \mathcal{S}_{\mathcal{F}}^{2}(T,\infty;H)$ and $\eta(\cdot) \in \mathcal{M}_{\mathcal{F}}^{2,\beta}(T,\infty;\mathcal{L}_2(\mathbb{R}^m,H))$ are terminal conditions.
		For all $t\in [0,T]$, $f:\Omega \times \mathcal{M}_{\mathcal{F}}^{2}(t,\infty;H) \times \mathcal{M}_{\mathcal{F}}^{2,\beta}(t,\infty;\mathcal{L}_2(\mathbb{R}^m,H)) \rightarrow  L^2(\mathcal{F}_t,H)$ is the generator. 
        Moreover, we assume that for all
        $(y(\cdot),z(\cdot))\in \mathcal{M}_{\mathcal F}^{2}(t,\infty;H)
        \times \mathcal{M}_{\mathcal F}^{2,\beta}(t,\infty;\mathcal{L}_2(\mathbb{R}^m,H))$,
        the process
        $t\mapsto f(s,y_{t+},z_{t+})$
        is progressively measurable.
        Recall that $W$ is $m$-dimensional, then $Z(\cdot)$ takes values in $\mathcal{L}_2(\mathbb{R}^m,H)$. 

\begin{remark}
The generator in \eqref{ABSDE without control} depends on the whole future path segment and the pair $(Y,Z)$ is prescribed on $[T,\infty)$. In particular, the values of $(\xi,\eta)$ on $[T,\infty)$ make the anticipated terms $Y_{s+}$ and $Z_{s+}$ well defined for all $s\in[0,T]$.
\end{remark}
        
    \begin{definition}
A pair of processes $(Y(\cdot),Z(\cdot))$ is called a solution to IABSEE \eqref{ABSDE without control} if \eqref{ABSDE without control} holds $P$-a.s. and 
\[
(Y(\cdot),Z(\cdot))\in \mathcal{S}^2_{\mathcal F}(0,\infty;H)\times \mathcal{M}^{2,\beta}_{\mathcal F}(0,\infty;\mathcal{L}_2(\mathbb{R}^m,H)),
\]
\end{definition}
		
		In order to study the well-posedness of equation (\ref{ABSDE without control}), we impose the following assumptions:
		
		\begin{assumption}\label{ass:IABSDE-Lip}
			 (Uniform Lipschitz condition) There exists a constant $L_2>0$ such that for all $t \in [0, T]$, $\left(y(\cdot), z(\cdot)\right), \left(y^{\prime}(\cdot), z^{\prime}(\cdot)\right) \in \mathcal{M}_{\mathcal{F}}^{2}(t,\infty;H) \times \mathcal{M}_{\mathcal{F}}^{2,\beta}(t,\infty;\mathcal{L}_2(\mathbb{R}^m,H))$
		\begin{equation}\nonumber
			\begin{aligned}
			\mathbb{E}&\left[\int_{t}^{T}\left\|f(s,y_{s+},z_{s+})-f(s,y^{\prime}_{s+},z^{\prime}_{s+}) \right\|_{H}^2 e^{\beta s} ds\right]\\
			&\leqslant L_2\mathbb{E}\left[\int_{t}^{T}\sup_{s \leqslant r < +\infty}\left\|y(r) - y^{\prime}(r)\right\|_{H}^2 e^{\beta s} ds 
			+ \int_{t}^{+\infty}\left\|z(s) - z^{\prime}(s)\right\|_{\mathcal{L}_2}^2e^{\beta s} ds\right],
			\end{aligned}
		\end{equation}
		where $\beta \geqslant 0 $ is an arbitrary constant.
		\end{assumption}
		
		\begin{assumption}\label{ass:IABSDE-int}
		\begin{equation}\nonumber
		\mathbb{E} \left[\int_0^T\left\|f(s, 0,0)\right\|_{H}^2 d s\right] <+\infty.
		\end{equation}
		\end{assumption}

		We then have the following \textit{a priori} estimate of the solutions of IABSEE \eqref{ABSDE without control}.
\begin{theorem}\label{thm: IABSDE diff estimate}
	For $i=1,2$, let $f_i$ be the generators that satisfy Assumptions \ref{ass:IABSDE-Lip}--\ref{ass:IABSDE-int},  $\xi^{(i)}(\cdot) \in \mathcal{S}^2_{\mathcal{F}}\left(T, +\infty ; H\right)$, $\eta^{(i)}(\cdot) \in \mathcal{M}_{\mathcal{F}}^{2,\beta}\left(T, +\infty; \mathcal{L}_2(\mathbb{R}^m,H)\right)$ be the terminal conditions. Let $\big(Y^{(i)}(\cdot), Z^{(i)}(\cdot)\big)$ be the corresponding solutions to IABSEE \eqref{ABSDE without control}. Then there exists a constant $C>0$, depending only on $T$, $L_2$ and $\beta$, such that
		\begin{align}
		\mathbb{E}&\Big[\sup_{0\leqslant t< +\infty}\|Y^{(1)}(t)-Y^{(2)}(t)\|_H^2 + \int_{0}^{+\infty}e^{\beta t}\|Z^{(1)}(t) - Z^{(2)}(t)\|_{\mathcal{L}_2}^2 dt\Big] \nonumber \\
		&\leqslant C \bigg\{ \mathbb{E}\bigg[\sup_{T\leqslant t < +\infty}\|\xi^{(1)}(t)-\xi^{(2)}(t)\|_H^2 \bigg] 
		+ \mathbb{E}\bigg[\int_{T}^{+\infty}e^{\beta t}\|\eta^{(1)}(t) - \eta^{(2)}(t)\|_{\mathcal{L}_2}^2 dt \bigg] \nonumber \\
		&\qquad \quad+ \mathbb{E}\bigg[\int_0^T 
		\|f_1(t,Y^{(2)}_{t+}, Z^{(2)}_{t+})-f_2(t,Y^{(2)}_{t+}, Z^{(2)}_{t+})\|_H^2 dt \bigg] \bigg\}. \label{eq:IABSDE stability}
	\end{align}
\end{theorem}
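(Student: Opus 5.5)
We set $\hat Y:=Y^{(1)}-Y^{(2)}$, $\hat Z:=Z^{(1)}-Z^{(2)}$, $\hat\xi:=\xi^{(1)}-\xi^{(2)}$, $\hat\eta:=\eta^{(1)}-\eta^{(2)}$. On $[T,\infty)$ we have $\hat Y=\hat\xi$ and $\hat Z=\hat\eta$, so those parts of the left-hand side of \eqref{eq:IABSDE stability} are bounded by the first two terms on the right. It therefore suffices to control $\sup_{0\leqslant t\leqslant T}\|\hat Y(t)\|_H^2$ and $\int_0^T e^{\beta t}\|\hat Z(t)\|_{\mathcal L_2}^2dt$.

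On $[0,T]$ the difference satisfies
\[
\hat Y(t)=\hat\xi(T)+\int_t^T \hat f(s)\,ds-\int_t^T\hat Z(s)\,dW(s),\qquad \hat f(s):=f_1(s,Y^{(1)}_{s+},Z^{(1)}_{s+})-f_2(s,Y^{(2)}_{s+},Z^{(2)}_{s+}).
\]
We split $\hat f=\Delta f+\delta f$, with $\Delta f(s):=f_1(s,Y^{(1)}_{s+},Z^{(1)}_{s+})-f_1(s,Y^{(2)}_{s+},Z^{(2)}_{s+})$ and $\delta f(s):=f_1(s,Y^{(2)}_{s+},Z^{(2)}_{s+})-f_2(s,Y^{(2)}_{s+},Z^{(2)}_{s+})$. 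Applying the It\^o product formula \eqref{ito on 2 product} to $e^{\beta t}\|\hat Y(t)\|_H^2$ on $[t,T]$ gives
\[
e^{\beta t}\|\hat Y(t)\|^2_H+\int_t^T e^{\beta s}\big(\beta\|\hat Y(s)\|_H^2+\|\hat Z(s)\|^2_{\mathcal L_2}\big)ds
=e^{\beta T}\|\hat\xi(T)\|_H^2+2\int_t^Te^{\beta s}\langle\hat Y(s),\hat f(s)\rangle_H ds-2\int_t^Te^{\beta s}\langle \hat Y(s),\hat Z(s)dW(s)\rangle_H.
\]
For the cross term we use Young's inequality with a parameter $\varepsilon$: $2\langle \hat Y,\hat f\rangle\leqslant \varepsilon^{-1}\|\hat Y\|^2+2\varepsilon\|\Delta f\|^2+2\varepsilon\|\delta f\|^2$. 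We then take expectations and apply Assumption \ref{ass:IABSDE-Lip} to $f_1$ on $[t,T]$. This bounds the contribution of $\Delta f$ by
\[
2\varepsilon L_2\,\mathbb E\Big[\int_t^T e^{\beta s}\sup_{r\geqslant s}\|\hat Y(r)\|^2_H ds+\int_t^\infty e^{\beta s}\|\hat Z(s)\|^2_{\mathcal L_2}ds\Big].
\]
The $\hat Z$-integral over $[T,\infty)$ equals the $\hat\eta$-term. We choose $\varepsilon$ with $2\varepsilon L_2\leqslant 1/2$, so the $\hat Z$-part on $[t,T]$ is absorbed into the left-hand side.

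The main obstacle is the anticipated supremum $\sup_{r\geqslant s}\|\hat Y(r)\|^2_H$, which is not a function of the past. We bound it by $\sup_{s\leqslant r\leqslant T}\|\hat Y(r)\|^2_H+\sup_{r\geqslant T}\|\hat\xi(r)\|^2_H$. This leads us to define
\[
u(t):=\mathbb E\Big[\sup_{t\leqslant r\leqslant T}\|\hat Y(r)\|_H^2\Big].
\]
We first use the identity above, with expectation (the martingale term vanishes), to control $\mathbb E\int_t^Te^{\beta s}\|\hat Z(s)\|^2_{\mathcal L_2}ds$ by $C$ times the data terms plus $C\int_t^T u(s)\,ds$. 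We then take $\sup_{r\in[t,T]}$ in the identity before taking expectations. The stochastic integral is handled by the Burkholder--Davis--Gundy inequality, exactly as in \eqref{eq:BDG}: it is at most $\tfrac14 u(t)$ plus $C\,\mathbb E\int_t^T e^{\beta s}\|\hat Z\|^2_{\mathcal L_2}ds$, and the latter is already controlled. This yields a backward inequality
\[
u(t)\leqslant C\Big\{\mathbb E\sup_{r\geqslant T}\|\hat\xi(r)\|^2_H+\mathbb E\int_T^\infty e^{\beta s}\|\hat\eta(s)\|^2_{\mathcal L_2}ds+\mathbb E\int_0^T\|\delta f(s)\|_H^2ds\Big\}+C\int_t^Tu(s)\,ds,
\]
with constants depending on $T$, $L_2$ and $\beta$ through the weights $e^{\beta s}\in[1,e^{\beta T}]$.

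The backward Gr\"onwall inequality then gives the bound for $u(0)$. Substituting back yields the $\hat Z$-bound on $[0,T]$. Adding the $[T,\infty)$ contributions of $\hat\xi$ and $\hat\eta$ gives \eqref{eq:IABSDE stability}.
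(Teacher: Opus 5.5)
Your proposal is correct and follows essentially the same route as the paper: It\^o's formula for $e^{\beta t}\|\hat Y(t)\|_H^2$, the Lipschitz bound with the anticipated supremum split at $T$, a $\hat Z$-estimate from the expectation identity, BDG for the supremum, and a backward Gr\"onwall argument. The one difference is that you keep a free Young parameter and push the $\varepsilon^{-1}\|\hat Y\|_H^2$ term into the Gr\"onwall integral, whereas the paper absorbs it by taking $\beta>2L_2$; as a result your argument works for any fixed $\beta\geqslant 0$, and it correctly keeps the factor $2$ from $\|\hat f\|_H^2\leqslant 2\|\Delta f\|_H^2+2\|\delta f\|_H^2$, so the $\hat Z$-term is genuinely absorbed. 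One small correction: the BDG term is bounded by $\tfrac14\mathbb E[\sup_{t\leqslant r\leqslant T}e^{\beta r}\|\hat Y(r)\|_H^2]$, not by $\tfrac14 u(t)$, so do the absorption on this weighted quantity, which is comparable to $u(t)$ up to the factor $e^{\beta T}$.
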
	
\begin{proof}
		For $t\in [0, +\infty)$, set
		\[
		\hat Y(t):=Y^{(1)}(t)-Y^{(2)}(t), \qquad
		\hat Z(t):=Z^{(1)}(t)-Z^{(2)}(t).
		\]
		Then $(\hat Y,\hat Z)$ satisfies
		\begin{equation} \nonumber
	\left\{\begin{aligned}
		\hat{Y}(t)=&\hat\xi(T) + \int_{t}^{T}\hat f\left(s \right) d t
		-\int_{t}^{T}\hat Z(s) d W(s), \quad t \in[0, T]; \\
		\hat Y(t) =& \hat \xi(t),\quad \hat Z(t) = \hat\eta(t), \quad t \in[T, +\infty).
	\end{aligned}\right.
\end{equation}
		where
		\[
		\hat f(s)
		:=f_1\bigl(s,Y^{(1)}_{s+},Z^{(1)}_{s+}\bigr)
		-f_2\bigl(s,Y^{(2)}_{s+},Z^{(2)}_{s+}\bigr),\quad \hat \xi(t): = \xi^{(1)}(t) - \xi^{(2)}(t), \quad \hat \eta(t): = \eta^{(1)}(t) - \eta^{(2)}(t).
		\]
		Applying It\^{o}'s formula (\ref{ito on scalar function}) to
		$e^{\beta t}\|\hat Y(t)\|_H^2$  on $[0,T]$ yields
		\begin{align}
			e&^{\beta t}\|\hat Y(t)\|_H^2 + \int_{t}^{T}e^{\beta s} \Big(\beta \|\hat Y(s)\|_H^2 + \|\hat Z(s)\|_{\mathcal{L}_2}^2\Big)ds \nonumber\\
			&= e^{\beta T}\|\hat Y(T)\|_H^2
			+2\int_{t}^{T}e^{\beta s}\langle \hat Y(s),\hat f(s)\rangle_H ds
			-2\int_{t}^{T}e^{\beta s}\langle \hat Y(s),\hat Z(s)dW(s)\rangle_H . \label{eq:IABSDE ito}
		\end{align}
		For the generator coefficient in (\ref{eq:IABDE diff esti generaotor}), by Assumption \ref{ass:IABSDE-Lip}, Cauchy--Schwarz inequality, Young's inequality, and the fact that $\sup_{s\leqslant r < +\infty}\|\hat Y(r)\|_{H}^2  \leqslant \sup_{s\leqslant r \leqslant T}\|\hat Y(r)\|_{H}^2 +\sup_{T\leqslant r < +\infty}\|\hat\xi(r)\|_H^2 $, we obtain 
		\begin{align}
			2&\mathbb{E}\Big[\int_{t}^{T}e^{\beta s}\langle \hat Y(s),\hat f(s)\rangle_H ds\Big] \nonumber \\
			&\leqslant \mathbb{E}\Big[\int_{t}^{T}e^{\beta s}\Big(2L_2\|\hat Y(s)\|_H^2 + \frac{1}{2L_2}\|\hat f(s)\|_H^2 \Big)ds\Big]\nonumber \\
			&\leqslant  \mathbb{E}\Big[\int_{t}^{T}e^{\beta s}\Big(2L_2\|\hat Y(s)\|_H^2 + \frac{1}{2}\sup_{s\leqslant r< +\infty} \|\hat Y(r)\|_H^2\Big)ds\Big] 
			\nonumber \\
			&\quad+ \frac{1}{2}\mathbb{E}\Big[\int_{t}^{+\infty}e^{\beta s} \|\hat Z(s)\|_{\mathcal{L}_2}^2 ds\Big]
			+\frac{1}{2} \mathbb{E}\Big[\int_{t}^{T}e^{\beta s}\|f_1(s,Y^{(2)}_{s+}, Z^{(2)}_{s+})-f_2(s,Y^{(2)}_{s+}, Z^{(2)}_{s+})\|_H^2 ds\Big]\nonumber \\
			&\leqslant 2L_2 \mathbb{E}\Big[\int_{t}^{T}e^{\beta s}\|\hat Y(s)\|_H^2ds\Big] 
			+ \frac{1}{2}\mathbb{E}\Big[\int_{t}^{T}\sup_{s\leqslant r\leqslant T}\big(e^{\beta r} \|\hat Y(r)\|_H^2\big) ds\Big]
			+ C\mathbb{E}\Big[\sup_{T\leqslant r < +\infty}\|\hat\xi(r)\|_H^2 \Big] \nonumber \\
			&\quad+\frac{1}{2}\mathbb{E}\Big[\int_{t}^{T}e^{\beta s} \|\hat Z(s)\|_{\mathcal{L}_2}^2 ds\Big] 
			+C\mathbb{E}\Big[\int_{T}^{+\infty}e^{\beta s} \|\hat \eta(s)\|_{\mathcal{L}_2}^2 ds\Big]\nonumber \\
			&\quad+ C\mathbb{E}\Big[\int_{t}^{T}\|f_1(s,Y^{(2)}_{s+}, Z^{(2)}_{s+})-f_2(s,Y^{(2)}_{s+}, Z^{(2)}_{s+})\|_H^2 ds\Big]. \label{eq:IABDE diff esti generaotor}
		\end{align}
		Taking expectation on both sides of (\ref{eq:IABSDE ito}) and combining (\ref{eq:IABDE diff esti generaotor}), we have the following estimate over the $Z$ component by choosing $\beta > 2L_2$,
		\begin{align}
			\mathbb{E}\bigg[\int_{t}^{T}e^{\beta s} \|\hat Z(s)\|_{\mathcal{L}_2}^2ds\bigg]&\leqslant  C\bigg\{\mathbb{E}\Big[\int_{t}^{T}\sup_{s\leqslant r\leqslant T}\big(e^{\beta r} \|\hat Y(r)\|_H^2\big) ds\Big] \nonumber \\
			&\quad\quad+ 
			\mathbb{E}\Big[\sup_{T\leqslant t < +\infty}\|\hat\xi(t)\|_H^2\Big] 
			+\mathbb{E}\Big[ \int_{T}^{+\infty}e^{\beta t}\left\|\hat \eta(t)\right\|_{\mathcal{L}_2}^2 ds\Big] 
			 \nonumber \\
			&\quad\quad+ \mathbb{E}\Big[\int_{t}^{T}\|f_1(s,Y^{(2)}_{s+}, Z^{(2)}_{s+})-f_2(s,Y^{(2)}_{s+}, Z^{(2)}_{s+})\|_H^2 ds\Big]\bigg\}. \label{IABSDE Z est}
		\end{align}
		Moreover, for the martingale term in (\ref{eq:IABSDE ito}), notice that for all $s\in [t,T]$,
		\begin{align*}
			\left|\int_s^T e^{\beta r}\langle \hat Y(r), \hat Z(r) d W(r)\rangle_H\right| & =\left|\int_t^T e^{\beta r}\langle\hat Y(r), \hat Z(r) d W(r)\rangle_H-\int_t^s e^{\beta r}\langle\hat Y(r), \hat Z(r) d W(r)\rangle_H\right| \\
			& \leqslant\left|\int_t^T e^{\beta r}\langle\hat Y(r), \hat Z(r) d W(r)\rangle_H\right|+\left|\int_t^s e^{\beta r}\langle\hat Y(r), \hat Z(r) d W(r)\rangle_H\right|,
		\end{align*}
		by the Burkholder--Davis--Gundy inequality, we have for some constant $C>0$,
		\begin{align}
			\mathbb{E}&\left[\sup_{t\leqslant s\leqslant T}\left|\int_{s}^{T}e^{\beta r}\langle \hat Y(r),\hat Z(r)dW(r)\rangle_H \right|\right]\nonumber \\
			&\leqslant 2\mathbb E\left[\sup _{t \leqslant s \leqslant T}\left|\int_t^s e^{\beta r}\langle \hat Y(r),\hat Z(r)dW(r)\rangle_H\right|\right] \nonumber \\
			& \leqslant C\mathbb E\left[\left(\int_t^T e^{2 \beta r}\|\hat Y(r)\|_H^2 \|\hat Z(r)\|_{\mathcal{L}_2}^2 d r\right)^{1 / 2}\right] \nonumber \\
			& \leqslant C\mathbb E\left[\left(\sup _{t \leqslant r \leqslant T} e^{\beta r / 2}\|\hat Y(r)\|_H^2\right)\left(\int_t^T e^{\beta r}\|\hat Z(r)\|_{\mathcal{L}_2}^2 d r\right)^{1 / 2}\right] \nonumber\\
			& \leqslant \frac{1}{4}\mathbb E\left[\sup _{t \leqslant r \leqslant T} e^{\beta r}\|\hat Y(r)\|_H^2\right]+ C E\left[\int_t^T e^{\beta r}\|\hat Z(r)\|_{\mathcal{L}_2}^2 d r\right]. \label{IABSDE BDG}
		\end{align}
		Taking supremum for  $t\in[\tau,T]$   for $\tau \in [0,T)$, then taking expectation on both sides of (\ref{eq:IABSDE ito}), and combining (\ref{eq:IABDE diff esti generaotor}) (\ref{IABSDE Z est}) (\ref{IABSDE BDG}), we obtain by choosing $\beta > \frac{1}{2}$,
		\begin{align*}
			\mathbb{E}\left[\sup_{\tau\leqslant s\leqslant T}e^{\beta t}\| \hat Y(t)\|^2_H\right] &\leqslant  C\bigg\{\mathbb{E}\Big[\int_{\tau}^{T}\sup_{s\leqslant r\leqslant T}\big(e^{\beta r} \|\hat Y(r)\|_H^2\big) ds\Big] \nonumber \\
				&\quad\quad+ 
				\mathbb{E}\Big[\sup_{T\leqslant t < +\infty}\|\hat\xi(t)\|_H^2\Big] 
				+\mathbb{E}\Big[ \int_{T}^{+\infty}e^{\beta t}\left\|\hat \eta(t)\right\|_{\mathcal{L}_2}^2 ds\Big] 
				\nonumber \\
				&\quad\quad+ \mathbb{E}\Big[\int_{\tau}^{T}\|f_1(s,Y^{(2)}_{s+}, Z^{(2)}_{s+})-f_2(s,Y^{(2)}_{s+}, Z^{(2)}_{s+})\|_H^2 ds\Big]\bigg\}.
			\end{align*}
		Applying Gr\"onwall's inequality,  we have
		\begin{align*}
			\mathbb{E}\Big[
			\sup_{0\leqslant t\leqslant T} e^{\beta t}\|\hat Y(t)\|_H^2
			+\int_0^T e^{\beta t}\|\hat Z(t)\|_{\mathcal{L}_2}^2dt
			\Big]
			\leqslant&
			C\mathbb{E}\Big[
			\sup_{T\leqslant t < +\infty}\|\hat\xi(t)\|_H^2
			+\int_T^{+\infty} e^{\beta t}\|\hat \eta(t)\|_{\mathcal{L}_2}^2
			\bigr)dt
			\Big]\\
			&+ \mathbb{E}\Big[\int_{0}^{T}\|f_1(t,Y^{(2)}_{t+}, Z^{(2)}_{t+})-f_2(t,Y^{(2)}_{t+}, Z^{(2)}_{t+})\|_H^2 dt\Big].
		\end{align*}
	Then the desired stability estimate (\ref{eq:IABSDE stability}) follows.
	
\end{proof}

\begin{corollary}\label{^{(n)}}
	Suppose Assumptions \ref{ass:IABSDE-Lip}--\ref{ass:IABSDE-int} hold, let $\big(Y(\cdot),Z(\cdot)\big)$ be the solution to IABSEE \eqref{ABSDE without control}. Then for any given terminal conditions $\xi(\cdot) \in \mathcal{S}^2_{\mathcal{F}}\left(T, +\infty ; H\right)$, $\eta(\cdot) \in \mathcal{M}_{\mathcal{F}}^{2,\beta}\left(T, +\infty; \mathcal{L}_2(\mathbb{R}^m,H)\right)$, there exists a constant $C>0$, depending only on $T$, $L_2$ and $\beta$, such that
	\begin{align*}
		\mathbb{E}&\Big[\sup_{0\leqslant t\leqslant T}\|Y(t)\|_H^2 + \int_{0}^{+\infty}e^{\beta t}\left\|Z(t)\right\|_{\mathcal{L}_2}^2dt\Big] \\
		&\leqslant C \bigg\{ \mathbb{E}\Big[\sup_{T\leqslant t < 	+\infty}\|\xi(t)\|_H^2 \Big] 
		+ \mathbb{E}\Big[\int_{T}^{+\infty}e^{\beta t}\| \eta(t)\|_{\mathcal{L}_2}^2 dt \Big] 
		+ \mathbb{E}\Big[\int_0^T 
		\|f(t,0,0)\|_H^2 dt \Big] \bigg\}. 
	\end{align*}
	Moreover, we have $(Y(\cdot), Z(\cdot)) \in \mathcal{S}_{\mathcal{F}}^2\left(0, +\infty; H\right) \times \mathcal{M}_{\mathcal{F}}^{2,\beta}\left(0, +\infty; \mathcal{L}_2(\mathbb{R}^m,H)\right)$ and $\left\{f\left(t, Y_{t+}, Z_{t+}\right)\right\}_{t \in[0, T]} \in \mathcal{M}_{\mathcal{F}}^2(0, T ; H)$.
\end{corollary}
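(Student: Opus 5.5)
The plan is to obtain the corollary as a direct specialization of Theorem~\ref{thm: IABSDE diff estimate}, comparing the given solution with the trivial solution of a trivial equation. Take $(f_1,\xi^{(1)},\eta^{(1)}) := (f,\xi,\eta)$ with solution $(Y^{(1)},Z^{(1)}) := (Y,Z)$. For the second datum choose the generator $f_2(s,y,z) := f(s,0,0)$, which does not depend on $(y,z)$, together with $\xi^{(2)} \equiv 0$ and $\eta^{(2)} \equiv 0$.

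First I will check that $f_2$ is admissible. Assumption~\ref{ass:IABSDE-Lip} holds trivially for $f_2$, since the left-hand side vanishes, and Assumption~\ref{ass:IABSDE-int} for $f_2$ is exactly Assumption~\ref{ass:IABSDE-int} for $f$. However, $(Y^{(2)},Z^{(2)}) = (0,0)$ does not solve the second equation unless $f(s,0,0)=0$. So I will not use it as the comparison solution, and the choice of the second equation has to be adjusted.

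The cleaner route is $f_2 \equiv 0$, $\xi^{(2)}\equiv 0$, $\eta^{(2)}\equiv 0$. This equation has the explicit solution $(Y^{(2)},Z^{(2)})=(0,0)$ on $[0,\infty)$, which clearly lies in $\mathcal S^2_{\mathcal F}(0,\infty;H)\times\mathcal M^{2,\beta}_{\mathcal F}(0,\infty;\mathcal L_2(\mathbb R^m,H))$. The generator $f_2\equiv 0$ satisfies both assumptions. Applying \eqref{eq:IABSDE stability}, the last term becomes $\mathbb E\int_0^T\|f(t,0_{t+},0_{t+})-0\|_H^2dt=\mathbb E\int_0^T\|f(t,0,0)\|_H^2dt$. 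The left-hand side dominates $\mathbb E[\sup_{0\le t\le T}\|Y(t)\|_H^2+\int_0^\infty e^{\beta t}\|Z(t)\|_{\mathcal L_2}^2dt]$, which gives the displayed estimate with the same constant depending on $T,L_2,\beta$.

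For the ``moreover'' part, I will split $\sup_{t\ge0}\|Y(t)\|_H\le \sup_{0\le t\le T}\|Y(t)\|_H+\sup_{t\ge T}\|\xi(t)\|_H$. Both pieces are square integrable, by the estimate and by $\xi\in\mathcal S^2_{\mathcal F}(T,\infty;H)$ respectively. Continuity and adaptedness come from the definition of solution, so $Y\in\mathcal S^2_{\mathcal F}(0,\infty;H)$, and the $Z$-bound gives membership in the weighted space. For the generator, I will write
\[
\|f(t,Y_{t+},Z_{t+})\|_H^2\le 2\|f(t,Y_{t+},Z_{t+})-f(t,0,0)\|_H^2+2\|f(t,0,0)\|_H^2.
\]
I then apply Assumption~\ref{ass:IABSDE-Lip} with $\beta$-weights (using $e^{\beta s}\ge1$) and $(y',z')=(0,0)$. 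This bounds the first term by $L_2\,\mathbb E[\int_0^T\sup_{r\ge s}\|Y(r)\|_H^2e^{\beta s}ds+\int_0^\infty e^{\beta s}\|Z(s)\|^2ds]$, which is finite by the previous step. Progressive measurability is part of the standing hypothesis on $f$, so $f(\cdot,Y_{\cdot+},Z_{\cdot+})\in\mathcal M^2_{\mathcal F}(0,T;H)$.

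The only delicate point I expect is that Assumption~\ref{ass:IABSDE-Lip} requires its arguments to lie in $\mathcal M^2_{\mathcal F}(t,\infty;H)$. This is an integrability condition on $Y$ over an infinite horizon, which $\mathcal S^2$ alone does not give. I will handle it by reading the Lipschitz bound as applied to the solution, which is exactly how it is already used in the proof of Theorem~\ref{thm: IABSDE diff estimate}. If needed, I will note that the right-hand side is finite anyway because $\sup_{r\ge s}\|Y(r)\|^2$ is integrable over the bounded interval $[0,T]$.
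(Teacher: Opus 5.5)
Your proposal is correct and follows the route the paper intends: the paper gives no separate proof, and the corollary is the special case of Theorem~\ref{thm: IABSDE diff estimate} with $f_2\equiv 0$, $\xi^{(2)}\equiv 0$, $\eta^{(2)}\equiv 0$ and comparison solution $(0,0)$, which is exactly where you end up after correctly discarding your first choice $f_2=f(\cdot,0,0)$. Your treatment of the ``moreover'' part is also sound: you split off $f(t,0,0)$ and apply Assumption~\ref{ass:IABSDE-Lip} against $(0,0)$, and your remark that $Y_{s+}$ need not lie in $\mathcal M^2_{\mathcal F}(s,\infty;H)$ identifies a defect in the paper's setup rather than a gap in your argument.
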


\medskip

		Now we study the well-posedness of the IABSEE (\ref{ABSDE without control}). 
		\begin{theorem}\label{thm:IABSDE-wellposed}
			Suppose Assumptions \ref{ass:IABSDE-Lip}-\ref{ass:IABSDE-int} hold.
			Then for any given terminal conditions $\xi(\cdot) \in \mathcal{S}^2_{\mathcal{F}}\left(T, +\infty ; H\right)$, $\eta(\cdot) \in \mathcal{M}_{\mathcal{F}}^{2,\beta}\left(T, +\infty; \mathcal{L}_2(\mathbb{R}^m,H)\right)$, IABSEE (\ref{ABSDE without control}) admits a unique solution, that is, there exists a unique pair of processes $(Y(\cdot), Z(\cdot))\in \mathcal{S}_\mathcal{F}^2\left(0, +\infty; H\right) \times \mathcal{M}_\mathcal{F}^{2,\beta}\left(0, +\infty; \mathcal{L}_2(\mathbb{R}^m,H)\right)$ satisfying (\ref{ABSDE without control}).
		\end{theorem}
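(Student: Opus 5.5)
Uniqueness follows directly from Theorem~\ref{thm: IABSDE diff estimate}. If $(Y^{(1)},Z^{(1)})$ and $(Y^{(2)},Z^{(2)})$ both solve \eqref{ABSDE without control} with the same $(\xi,\eta)$ and the same $f$, then every term on the right-hand side of \eqref{eq:IABSDE stability} vanishes. Hence $Y^{(1)}=Y^{(2)}$ in $\mathcal{S}^2_{\mathcal F}(0,\infty;H)$ and $Z^{(1)}=Z^{(2)}$ in $\mathcal{M}^{2,\beta}_{\mathcal F}(0,\infty;\mathcal{L}_2(\mathbb{R}^m,H))$.

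For existence I will use a Picard iteration that mirrors the proof of Theorem~\ref{thm:ISFDE-wellposed}, freezing the anticipated arguments at each step. Set $(Y^{(0)},Z^{(0)})=(0,0)$ on $[0,T)$ and $(\xi,\eta)$ on $[T,\infty)$. Given $(Y^{(n)},Z^{(n)})$ in the solution space, the process $g^{(n)}(s):=f(s,Y^{(n)}_{s+},Z^{(n)}_{s+})$ belongs to $\mathcal{M}^2_{\mathcal F}(0,T;H)$ by Assumptions \ref{ass:IABSDE-Lip}--\ref{ass:IABSDE-int}, and it is progressively measurable by the standing hypothesis on $f$. I then solve the classical Hilbert-valued BSDE
\[
Y^{(n+1)}(t)=\xi(T)+\int_t^T g^{(n)}(s)\,ds-\int_t^T Z^{(n+1)}(s)\,dW(s).
\]
Concretely, $Y^{(n+1)}(t)=\mathbb{E}\big[\xi(T)+\int_t^T g^{(n)}(s)ds\,\big|\,\mathcal F_t\big]$, and $Z^{(n+1)}$ comes from the martingale representation theorem for $H$-valued square-integrable martingales driven by the finite-dimensional $W$. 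I extend $(Y^{(n+1)},Z^{(n+1)})$ by $(\xi,\eta)$ on $[T,\infty)$. Continuity of $Y^{(n+1)}$ on $[0,\infty)$ holds because $Y^{(n+1)}(T)=\xi(T)$ and $\xi$ is continuous.

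Next comes the contraction estimate. Let $\Delta Y^{(n)}=Y^{(n+1)}-Y^{(n)}$ and $\Delta Z^{(n)}=Z^{(n+1)}-Z^{(n)}$. These differences vanish on $[T,\infty)$, so every sup over $[s,\infty)$ reduces to a sup over $[s,T]$, and the $Z$-integral over $[s,\infty)$ reduces to one over $[s,T]$. Apply It\^o's formula to $e^{\beta t}\|\Delta Y^{(n)}(t)\|_H^2$ exactly as in \eqref{eq:IABSDE ito}, then use Young's inequality with a free parameter $\varepsilon$, the BDG bound \eqref{IABSDE BDG}, and Assumption~\ref{ass:IABSDE-Lip}. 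This should give, for $t\in[0,T]$,
\[
\mathbb{E}\Big[\sup_{t\le r\le T}e^{\beta r}\|\Delta Y^{(n)}(r)\|^2_H+\int_t^T e^{\beta r}\|\Delta Z^{(n)}(r)\|^2_{\mathcal L_2}dr\Big]
\le C\varepsilon^{-1} L_2\,\mathbb{E}\Big[\int_t^T\sup_{s\le r\le T}e^{\beta r}\|\Delta Y^{(n-1)}(r)\|^2_H ds+\int_t^T e^{\beta r}\|\Delta Z^{(n-1)}(r)\|^2_{\mathcal L_2}dr\Big].
\]

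This step is the main obstacle. The $Z$-term enters the Lipschitz bound without a time integral, so the factorial iteration used for the forward equation does not close by itself. The weight $\beta$ cannot absorb it either, because Assumption~\ref{ass:IABSDE-Lip} carries $e^{\beta s}$ on both sides. What I will try first is to avoid bounding $2\langle \Delta Y,\Delta g\rangle$ by Young's inequality in the form $\beta\|\Delta Y\|^2+\beta^{-1}\|\Delta g\|^2$. Instead I will use the explicit representation $\Delta Y^{(n)}(t)=\mathbb{E}[\int_t^T\Delta g^{(n-1)}ds\,|\,\mathcal F_t]$, which gives $e^{\beta t}\|\Delta Y^{(n)}(t)\|^2\le (T-t)\,\mathbb{E}[\int_t^T e^{\beta s}\|\Delta g^{(n-1)}\|^2ds\,|\,\mathcal F_t]$. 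Combined with It\^o's formula at $t$, this yields a factor $\beta^{-1}$ (or $T-t$) in front of $\mathbb{E}\int e^{\beta s}\|\Delta g\|^2$. Choosing $\beta$ large, or working on short subintervals $[T-\delta,T]$ and pasting backward as is standard for anticipated BSDEs, makes the map a strict contraction in the norm $\mathbb{E}[\sup e^{\beta r}\|Y\|^2+\int e^{\beta r}\|Z\|^2]$. If the dependence of the constant on $\beta$ is unfavorable, I fall back on the time-slicing argument: fix the solution on $[T-\delta,\infty)$, use it as new terminal data, and repeat finitely many times.

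Once contraction holds, $(Y^{(n)},Z^{(n)})$ is Cauchy, and its limit lies in $\mathcal{S}^2_{\mathcal F}(0,\infty;H)\times\mathcal{M}^{2,\beta}_{\mathcal F}(0,\infty;\mathcal L_2)$. Passing to the limit in the equation works as in Theorem~\ref{thm:ISFDE-wellposed}: the Lipschitz condition controls $g^{(n)}\to f(\cdot,Y_{\cdot+},Z_{\cdot+})$ in $\mathcal M^2_{\mathcal F}$, and the It\^o isometry handles the stochastic integral.
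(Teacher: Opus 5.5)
Your proposal is correct, and at the decisive step it takes a different, and sounder, route than the paper.

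\textbf{Where the two arguments differ.} Both use Picard iteration with the generator frozen at the previous iterate. The paper then asserts $u^n(0)+v^n(0)\leqslant C\int_0^T u^{n-1}(s)\,ds$, in which only the previous $Y$-increment appears, and iterates factorially as in the forward case. As you point out, Assumption~\ref{ass:IABSDE-Lip} puts $\mathbb{E}\int e^{\beta s}\|\Delta Z^{(n-1)}(s)\|_{\mathcal L_2}^2\,ds$ on the right with no time integral. Unlike in the stability proof, this term cannot be absorbed into the left-hand side, because it belongs to the previous iterate. So the paper's recursion is not justified as written, whereas your contraction closes.

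\textbf{Making the $1/\beta$ gain rigorous.} Drop the pointwise bound $e^{\beta t}\|\Delta Y^{(n)}(t)\|_H^2\leqslant (T-t)E_t[\cdots]$. It does not control the supremum in $t$: it is a conditional expectation of a merely integrable variable, so Doob's $L^2$ inequality does not apply. Instead, insert $2\langle\Delta Y,\Delta g\rangle_H\leqslant\beta\|\Delta Y\|_H^2+\beta^{-1}\|\Delta g\|_H^2$ into the It\^o identity and then apply BDG. This gives $\mathbb{E}[\sup_{t\leqslant T}e^{\beta t}\|\Delta Y^{(n)}(t)\|_H^2+\int_0^T e^{\beta s}\|\Delta Z^{(n)}(s)\|_{\mathcal L_2}^2ds]\leqslant C\beta^{-1}\mathbb{E}\int_0^T e^{\beta s}\|\Delta g^{(n-1)}(s)\|_H^2ds$, with $C$ depending only on the BDG constant. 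Two further facts finish it: $e^{\beta s}\sup_{r\geqslant s}\|\Delta Y(r)\|_H^2\leqslant\sup_{r\geqslant s}e^{\beta r}\|\Delta Y(r)\|_H^2$, and all differences vanish on $[T,\infty)$. The Lipschitz bound then yields the contraction factor $CL_2\max(T,1)/\beta$.

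\textbf{On the choice of $\beta$.} The large-$\beta$ argument needs $L_2$ to be independent of $\beta$. That is how ``arbitrary $\beta$'' must be read anyway, since the paper's own stability proof also picks $\beta>2L_2$. Changing $\beta$ is harmless: the weighted norms on $[0,T]$ are equivalent, and the tail on $[T,\infty)$ is fixed. If you want to keep one fixed $\beta$, your slicing fallback works with a uniform $\delta$. On a slice $[a,a+\delta]$ with perturbations supported there, the frozen-generator estimate gives a factor $C\delta$. The Lipschitz bound, applied with $t=a$, costs $L_2e^{\beta\delta}\max(\delta,1)$, independently of $a$. Dependence on the already constructed part of the path is harmless, since that part is frozen.

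\textbf{What each route buys.} If the paper's route were valid, it would avoid any choice of $\beta$ and any slicing. Yours is the one that actually accounts for the anticipated $Z$-dependence. One caveat applies to both arguments: the martingale representation step tacitly requires $\{\mathcal F_t\}$ to be the augmented filtration of $W$.
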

\begin{proof}
Uniqueness follows directly from the Stability Theorem~\ref{thm: IABSDE diff estimate}.
We shall prove the existence through the Picard iteration.\\
\noindent
Define the initial pair $\big(Y^{(0)}(t),Z^{(0)}(t)\big)=\big(0,0\big)$ for $t\in[0,T]$ and $\big(Y^{(0)}(t),Z^{(0)}(t)\big)=\big(\xi(t),\eta(t)\big)$ for $t\geqslant T$. Obviously, $(Y^{(0)}(\cdot), Z^{(0)}(\cdot))\in \mathcal{S}_\mathcal{F}^2\left(0, +\infty; H\right) \times \mathcal{M}_\mathcal{F}^{2,\beta}\left(0, +\infty; \mathcal{L}_2(\mathbb{R}^m,H)\right)$. For $n=1,2, \ldots$, define the Picard sequence through
\begin{equation}\label{picard}
	\left\{ \begin{aligned}
		Y^{(n)}(t)=&\xi(T)+\int_{t}^T f\big(s, Y_{s+}^{(n-1)}, Z_{s+}^{(n-1)}\big) d s-\int_{t}^T Z^{(n)}(s) d W(s), \quad &&t\in [0,T];\\
Y^{(n)}(t) =&  \xi(t),\quad  Z^{(n)}(t) = \eta(t), \quad t \in[T, +\infty).
	\end{aligned}\right.
\end{equation}
By Corollary (\ref{^{(n)}}), we have a uniform \textit{a priori} bound for the Picard sequence 
\begin{align*}
	\mathbb{E}&\Big[\sup_{0\leqslant t\leqslant T}\|Y^{(n)}(t)\|_H^2 + \int_{0}^{+\infty}e^{\beta t}\left\|Z^{(n)}(t)\right\|_{\mathcal{L}_2}^2dt\Big] \\
	&\leqslant C \bigg\{ \mathbb{E}\Big[\sup_{T\leqslant t < 	+\infty}\|\xi(t)\|_H^2 \Big] 
	+ \mathbb{E}\Big[\int_{T}^{+\infty}e^{\beta t}\| \eta(t)\|_{\mathcal{L}_2}^2 dt \Big] 
	+ \mathbb{E}\bigg[\int_0^T 
	\|f(t,0,0)\|_H^2 dt \bigg] \bigg\}. 
\end{align*}
where the constant $C>0$ is independent of $n$. Thus, $\big\{\big(Y^{(n)}(\cdot), Z^{(n)}(\cdot)\big) \big\}_{n \geqslant 1}$ is a sequence in $\mathcal{S}_{\mathcal{F}}^2\left(0,+\infty ; H\right) \times\mathcal{M}_{\mathcal{F}}^{2,\beta}\left(0,+\infty ;\mathcal{L}_2(\mathbb{R}^m,H)\right)$.\\
\noindent
Consider the equation for the difference $\Delta Y^{(n)}(t):=Y^{(n)}(t)-Y^{(n-1)}(t)$, $\Delta Z^{(n)}(t):=Z^{(n)}(t)-Z^{(n-1)}(t)$,
\begin{align*}
	\Delta Y^{(n)}(t) = \int_{t}^{T}\big[f(s, Y^{(n-1)}_{s+},  Z^{(n-1)}_{s+}) - f(s, Y^{(n-2)}_{s+},  Z^{(n-2)}_{s+})\big]ds - \int_{t}^{T} \Delta Z^{(n)}(s) dW(s), \quad t\in[0,T].
\end{align*}
with the terminal conditions $Y^{(n)}(t) = 0, Z^{(n)}(t) = 0$ for $t\geqslant T$.
Applying It\^{o}'s formula (\ref{ito on scalar function}) to
$e^{\beta t}\|\Delta Y^{(n)}(t)\|_H^2$  on $[0,T]$ yields
\begin{align}
	e&^{\beta t}\|\Delta Y^{(n)}(t)\|_H^2 + \int_{t}^{T}e^{\beta s} \Big(\beta \|\Delta Y^{(n)}(s)\|_H^2 + \|\Delta Z^{(n)}(s)\|_{\mathcal{L}_2}^2\Big)ds \nonumber\\
	&= 2\int_{t}^{T}e^{\beta s}\langle \Delta Y^{(n)}(s),f(s, Y^{(n-1)}_{s+},  Z^{(n-1)}_{s+}) - f(s, Y^{(n-2)}_{s+},  Z^{(n-2)}_{s+})\rangle_H ds \nonumber \\
	&\quad +2\int_{t}^{T}e^{\beta s}\langle \Delta Y^{(n)}(s),\Delta Z^{(n)}(s)dW(s)\rangle_H . \label{ito}
\end{align}
Using the same method we deal with (\ref{eq:IABSDE ito}) to analyze the generator and the martingale part in (\ref{ito}), we obtain that there exists a positive constant $C$ depending only on $T$, $L_2$ and $\beta$ such that 
\begin{align*}
	\mathbb{E} & \Big[\sup_{t\leqslant s\leqslant T}e^{\beta s}\|\Delta Y^{(n)}(s)\|_H^2\Big] +
	\mathbb{E} \Big[ \int_t^T e^{\beta s}\|\Delta Z^{(n)}(s)\|^2_{\mathcal{L}_2} ds\Big]\nonumber\\
	& \leqslant C \int_t^T \mathbb{E} \Big[\sup_{s\leqslant r\leqslant T}e^{\beta r}\|\Delta Y^{(n-1)}(r)\|_H^2\Big] d s.
\end{align*}
Let $u^n(t):=\mathbb{E} \Big[\sup_{t\leqslant s\leqslant T}e^{\beta s}\|\Delta Y^{(n)}(s)\|_H^2\Big]$ and
$v^n(t):=\mathbb{E} \Big[ \int_t^T e^{\beta s}\|\Delta Z^{(n)}(s)\|^2_{\mathcal{L}_2} ds\Big]$, the above formula can be rewritten as
\begin{equation}\nonumber
	u^n(0)+v^n(0)\leqslant C\int_{0}^{T}u^{n-1}(s) d s, \quad u^{n}(T)=0,\quad n\geqslant 1.
\end{equation}
Iterating the above inequality yields that
\[
u^n(0)\leqslant \frac{(TC)^{n-1}}{(n-1)!}u^1(0), \quad
v^n(0)\leqslant \frac{T^{n-1}C^{n-2}}{(n-2)!}u^1(0),
\]
where we can easily show that  $u^1(0)< +\infty$. Thus $\left\{\left(Y^{(n)}(\cdot), Z^{(n)}(\cdot)\right)\right\}_{n \geqslant 1}$ is a Cauchy sequence in Banach space $\mathcal{S}_{\mathcal{F}}^2\left(0,+\infty ; H\right) \times$ $\mathcal{M}_{\mathcal{F}}^{2,\beta}\left(0,+\infty ; \mathcal{L}_2(\mathbb{R}^m,H)\right)$.  \\
\noindent
Denote by $\big(Y(\cdot), Z(\cdot)\big)$ the limit of Picard sequence $\left\{\left(Y^{(n)}(\cdot), Z^{(n)}(\cdot)\right)\right\}_{n \geqslant 1}$ as  $n \rightarrow \infty$. By Assumption \ref{ass:IABSDE-Lip}, we have 
\[
\mathbb{E}\Big[\int_0^T \|f(s,Y^{(n)}_{s+}, Z^{(n)}_{s+})-f(s,Y_{s+}, Z_{s+})\|_H^2\,ds\Big] \to 0.
\]
Then for all $t\in[0,T]$, the following convergence holds in $L^2(\Omega;H)$, 
\[
\int_t^T f(s,Y^{(n)}_{s+}, Z^{(n)}_{s+})ds \to \int_t^T f(s,Y_{s+}, Z_{s+})ds, \quad 
\int_{t}^T Z^{(n)}(s) d W(s) \to \int_{t}^T Z(s) d W(s).
\]
Passing the limit in (\ref{picard}), as $n \rightarrow \infty$, we obtain that $\big(Y(\cdot), Z(\cdot)\big)$ 
solves equation (\ref{ABSDE without control}).

\end{proof}

       \section{Non-anticipative path derivatives and infinite-window dual operators in Hilbert spaces.}\label{chapter Non-anticipative path derivatives and infinite-window dual operators in Hilbert spaces}

This section gives an infinite-window Hilbert-space version of the delay/anticipation duality, which will play a key role in deriving the adjoint equation and the SMP in the following chapter.
Throughout this section, $E$ and $F$ are generic separable Hilbert spaces with inner products $\langle\cdot,\cdot\rangle_E$ and $\langle\cdot,\cdot\rangle_F$ respectively, and $\alpha$ is the finite signed Borel measure on $[0,+\infty)$. We denote by $|\alpha|$ the total variation of $\alpha$, and we fix a Borel function
\[
\jmath:[0,+\infty)\to\{-1,1\}
\quad\text{such that}\quad
\alpha(d\theta)=\jmath(\theta)\,|\alpha|(d\theta).
\]

No absolute continuity with respect to the Lebesgue measure is imposed on $\alpha$; in particular, Dirac cases and mixed discrete-continuous measures are fully allowed.

\subsection{Fading-memory paths and lag-kernel derivatives.}\label{section 5.1}

Fix $\lambda>0$ and recall the definition of the space
\[
C_\lambda(( -\infty,T];E)
:=
\left\{x\in C(( -\infty,T];E):
\|x\|_{\lambda,T}:=\sup_{s\leqslant T}e^{\lambda s}\|x(s)\|_E<\infty,
\ \lim_{s\to-\infty}e^{\lambda s}x(s)=0
\right\}.
\]

For each $t\in[0,T]$, define the non-anticipative extension
\[
x_t(s):=
\begin{cases}
x(s),& s\leqslant t,\\
x(t),& t<s\leqslant T,
\end{cases}
\qquad s\in(-\infty,T],
\]
and the non-anticipative subspace
\[
C_{\lambda,t}(( -\infty,T];E):=\{x_t:x\in C_\lambda(( -\infty,T];E)\}.
\]

\begin{lemma}\label{lem:fading-memory-banach}
The space $C_\lambda(( -\infty,T];E)$ is a Banach space. Moreover, for every $r\leqslant T$, the evaluation map
\[
\delta_r:C_\lambda(( -\infty,T];E)\to E,
\qquad \delta_r(x):=x(r),
\]
is continuous and satisfies
\[
\|x(r)\|_E\leqslant e^{-\lambda r}\|x\|_{\lambda,T},
\qquad x\in C_\lambda(( -\infty,T];E).
\]
\end{lemma}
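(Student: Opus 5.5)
The evaluation estimate is immediate and I will dispose of it first. For $x\in C_\lambda((-\infty,T];E)$ and $r\leqslant T$, the definition of the norm gives $e^{\lambda r}\|x(r)\|_E\leqslant \sup_{s\leqslant T}e^{\lambda s}\|x(s)\|_E=\|x\|_{\lambda,T}$, hence $\|x(r)\|_E\leqslant e^{-\lambda r}\|x\|_{\lambda,T}$. Since $\delta_r$ is linear, this bound shows it is bounded with operator norm at most $e^{-\lambda r}$, and therefore continuous. The same display also shows that $\|\cdot\|_{\lambda,T}$ is a genuine norm: the triangle inequality and homogeneity pass through the weighted supremum, and $\|x\|_{\lambda,T}=0$ forces $x(r)=0$ for every $r$. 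The defining conditions are preserved under linear combinations, so the space is a normed vector space.

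For completeness I will use the isometry $\Phi(x)(s):=e^{\lambda s}x(s)$. It maps $C_\lambda((-\infty,T];E)$ onto the space $C_0$ of continuous functions $y:(-\infty,T]\to E$ with $y(s)\to 0$ as $s\to-\infty$, equipped with the sup norm. Boundedness is automatic here, since a continuous function vanishing at $-\infty$ is bounded on $(-\infty,T]$. The inverse is $y\mapsto e^{-\lambda s}y(s)$, and both directions preserve continuity because $e^{\pm\lambda s}$ is continuous. It therefore suffices to show that $C_0$ is closed in the Banach space $C_b((-\infty,T];E)$ of bounded continuous $E$-valued functions with the sup norm, where completeness of $E$ is used.

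Closedness goes as follows. Let $y^n\to y$ uniformly with $y^n\in C_0$. The uniform limit $y$ is continuous by the standard $\varepsilon/3$ argument. Given $\varepsilon>0$, choose $n$ with $\|y^n-y\|_\infty<\varepsilon/2$, and then choose $M$ such that $\|y^n(s)\|_E<\varepsilon/2$ for all $s<-M$. This gives $\|y(s)\|_E<\varepsilon$ for $s<-M$, so $y\in C_0$.

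Equivalently, working directly in $C_\lambda$: for a Cauchy sequence $(x^n)$, the evaluation bound shows that $(x^n(r))$ is Cauchy in $E$ for each $r$, so it has a pointwise limit $x$. The convergence $e^{\lambda s}x^n(s)\to e^{\lambda s}x(s)$ is uniform in $s$, and $x$ inherits continuity and the decay at $-\infty$ by the argument above. The only step needing care is this tail argument, and it is routine; I expect no real obstacle in the proof.
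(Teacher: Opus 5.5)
Your proposal is correct and follows essentially the same route as the paper: both pass to the sup-norm setting via $y(s)=e^{\lambda s}x(s)$, use completeness of the bounded continuous $E$-valued functions, check the decay at $-\infty$ with the same $\varepsilon/2$ tail argument, and read the evaluation bound directly off the definition of $\|\cdot\|_{\lambda,T}$. Phrasing this as closedness of $C_0$ in $C_b$ under the isometry, instead of handling a Cauchy sequence directly as the paper does, is only a cosmetic difference.
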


\begin{proof}
Let $(x_n)_{n\geqslant1}$ be a Cauchy sequence in $C_\lambda(( -\infty,T];E)$, and define
\[
y_n(s):=e^{\lambda s}x_n(s),\qquad s\leqslant T.
\]
Then $(y_n)_{n\geqslant1}$ is Cauchy under the sup norm on $C(( -\infty,T];E)$, hence there exists a bounded continuous function $y:( -\infty,T]\to E$ such that
\[
\sup_{s\leqslant T}\|y_n(s)-y(s)\|_E\longrightarrow0.
\]
We claim that $y(s)\to0$ as $s\to-\infty$. Fix $\varepsilon>0$. Choose $N$ such that
\[
\sup_{s\leqslant T}\|y_N(s)-y(s)\|_E<\frac{\varepsilon}{2}.
\]
Since $x_N\in C_\lambda(( -\infty,T];E)$, there exists $R<0$ such that
\[
\|y_N(s)\|_E=\|e^{\lambda s}x_N(s)\|_E<\frac{\varepsilon}{2},
\qquad s\leqslant R.
\]
Hence, for all $s\leqslant R$,
\[
\|y(s)\|_E\leqslant \|y(s)-y_N(s)\|_E+\|y_N(s)\|_E<\varepsilon.
\]
Thus $y(s)\to0$ as $s\to-\infty$.
Define
\[
x(s):=e^{-\lambda s}y(s),\qquad s\leqslant T.
\]
Then $x\in C_\lambda(( -\infty,T];E)$ and
\[
\|x_n-x\|_{\lambda,T}
=
\sup_{s\leqslant T}e^{\lambda s}\|x_n(s)-x(s)\|_E
=
\sup_{s\leqslant T}\|y_n(s)-y(s)\|_E
\longrightarrow0.
\]
Therefore $C_\lambda(( -\infty,T];E)$ is complete.
\\
\\
\noindent
For the evaluation map $\delta_r$, the definition of $\|\cdot\|_{\lambda,T}$ gives, for every $r\leqslant T$,
\[
\|x(r)\|_E=e^{-\lambda r}e^{\lambda r}\|x(r)\|_E
\leqslant e^{-\lambda r}\sup_{s\leqslant T}e^{\lambda s}\|x(s)\|_E
=e^{-\lambda r}\|x\|_{\lambda,T}.
\]
Hence $\delta_r$ is continuous.
\end{proof}

\begin{definition} \label{def: non-anticipatively differentiable}
Let
\[
a:[0,T]\times C_\lambda(( -\infty,T];E)\to F
\]
be measurable. We say that $a$ is \emph{non-anticipatively differentiable with respect to the path variable} at $(t,x)$ if there exists a bounded linear operator
\[
\partial_x a(t,x)\in \mathcal L\bigl(C_{\lambda,t}(( -\infty,T];E),F\bigr)
\]
such that
\[
a(t,x+h)-a(t,x)-\partial_x a(t,x)h=o(\|h\|_{\lambda,T})
\qquad\text{as }\|h\|_{\lambda,T}\to0,
\]
for all $h\in C_{\lambda,t}(( -\infty,T];E)$.
We call the operator $\partial_x a(t,x)$ the (infinite-window) non-anticipative path derivative of $a(t,\cdot)$ at the path $x$. 
\end{definition}

If such a derivative $\partial_x a(t,x)$ exists, it is unique. Indeed, the non-anticipative path derivative is precisely the Fr\'echet derivative restricted to the past information over $(-\infty,t]$. The uniqueness of $\partial_x a(t,x)$ therefore follows directly from the uniqueness of the Fr\'echet derivative; see Remark~\ref{connection of 2 derivatives} for further details.

\begin{assumption}\label{ass:lag-kernel}
Fix a reference path $x\in C_\lambda(( -\infty,T];E)$, and write
\[
\rho_t:=\partial_x a(t,x)
\in \mathcal L\bigl(C_{\lambda,t}(( -\infty,T];E),F\bigr),
\qquad t\in[0,T].
\]
We assume that there exists a kernel
\[
K:[0,T]\times[0,+\infty)\to\mathcal L(E,F)
\]
such that:
\begin{enumerate}
\item[(i)] for every $e\in E$, the map $(t,\theta)\mapsto K(t,\theta)e$ is jointly measurable;
\item[(ii)] for every $t\in[0,T]$ and every $h\in C_{\lambda,t}(( -\infty,T];E)$,
\[
\rho_t(h)=\int_0^{+\infty}K(t,\theta)h(t-\theta)\,\alpha(d\theta);
\]
\item[(iii)] the constants
\[
M_0:=\sup_{t\in[0,T]}\int_0^{+\infty}\|K(t,\theta)\|_{\mathcal L(E,F)}\,|\alpha|(d\theta)
\]
and
\[
M:=\sup_{u\in(-\infty,T]}\int_0^{+\infty}\|K(u+\theta,\theta)\|_{\mathcal L(E,F)}\mathbf 1_{[0,T]}(u+\theta)\,|\alpha|(d\theta)
\]
are finite.
\end{enumerate}
\end{assumption}

\begin{remark}
The non-anticipative operator in the sense of Definition~\ref{def: non-anticipatively differentiable} is the infinite-window extension of the non-anticipative derivative introduced in Liu, Song and Wang \cite{songjian2025}. We note that Definition~\ref{def: non-anticipatively differentiable} allows arbitrary finite signed measures $\alpha$, in particular:
\begin{enumerate}
\item[(a)] if $\alpha=\delta_\delta$, then
\[
\rho_t(h)=K(t,\delta)h(t-\delta);
\]
The Dirac case is well defined. Indeed, by Lemma~\ref{lem:fading-memory-banach}, the evaluation map $h\mapsto h(t-\delta)$ is continuous on $C_\lambda(( -\infty,T];E)$.
\item[(b)] if $\alpha$ is absolutely continuous with density $w(\theta)$, then
\[
\rho_t(h)=\int_0^{+\infty}K(t,\theta)h(t-\theta)w(\theta)\,d\theta.
\]
\end{enumerate}
\end{remark}

\begin{remark}
The finite-dimensional delay operator is covered by taking $E=F=\mathbb R^k$ and
\[
K(t,\theta)=\phi(t-\theta,t)I_k.
\]
Then
\[
\rho_t(h)=\int_0^\infty \phi(t-\theta,t)\,h(t-\theta)\,\alpha(d\theta),
\]
which coincides with the delay control operator studied in Cheng \cite{ziji2}. The Hilbert-valued delay control operator \(v_d\) in \eqref{ISDDE with control} is treated in Corollary~\ref{cor:section3-specialization} by taking \(E=F=U\).
\end{remark}

Now we present a  change-of-variables identity, which will be used repeatedly to prove the duality identity in the following chapters. 
\begin{lemma}\label{lem:cv-alpha}
Let $g:[0,T]\times[0,+\infty)\to[0,+\infty]$ be measurable. Then
\begin{equation}\label{eq:cv-alpha}
\int_0^T\int_0^{+\infty}g(t,\theta)\,|\alpha|(d\theta)dt
=
\int_{-\infty}^T\int_0^{+\infty}g(u+\theta,\theta)\mathbf 1_{[0,T]}(u+\theta)\,|\alpha|(d\theta)du.
\end{equation}
\end{lemma}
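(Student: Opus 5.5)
The plan is to apply Tonelli's theorem to a nonnegative measurable integrand on the product space $\mathbb R\times[0,+\infty)$ with the product measure $du\otimes|\alpha|(d\theta)$, and then make a linear change of variables in the Lebesgue variable for each fixed $\theta$. No integrability assumption is needed because $g\geqslant 0$: all integrals are understood in $[0,+\infty]$. Since $|\alpha|$ is a finite (hence $\sigma$-finite) positive measure and Lebesgue measure is $\sigma$-finite, Tonelli applies.

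First, I extend $g$ by setting $\tilde g(t,\theta):=g(t,\theta)\mathbf 1_{[0,T]}(t)$ for $t\in\mathbb R$. This function is jointly measurable on $\mathbb R\times[0,+\infty)$. The left-hand side of \eqref{eq:cv-alpha} then equals $\int_{\mathbb R}\int_0^{+\infty}\tilde g(t,\theta)\,|\alpha|(d\theta)\,dt$. By Tonelli, I rewrite it as $\int_0^{+\infty}\big(\int_{\mathbb R}\tilde g(t,\theta)\,dt\big)|\alpha|(d\theta)$.

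For each fixed $\theta\geqslant0$, I substitute $t=u+\theta$. Lebesgue measure is translation invariant, so the inner integral becomes $\int_{\mathbb R}g(u+\theta,\theta)\mathbf 1_{[0,T]}(u+\theta)\,du$. The indicator forces $u\leqslant T-\theta\leqslant T$, so the $u$-integral may be restricted to $(-\infty,T]$ without changing its value.

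Next I must check that $(u,\theta)\mapsto g(u+\theta,\theta)\mathbf 1_{[0,T]}(u+\theta)$ is jointly measurable. It is the composition of $\tilde g$ with the continuous (hence Borel) map $(u,\theta)\mapsto(u+\theta,\theta)$. With that in hand, Tonelli applies again and swaps the order back, giving the right-hand side of \eqref{eq:cv-alpha}.

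The only delicate point is this measurability of the composed integrand. If $g$ is merely Lebesgue measurable in $t$ rather than Borel, the composition with the shear map is still measurable with respect to the completed product measure, because the shear is a measure-preserving bijection of $\mathbb R\times[0,\infty)$ for $du\otimes|\alpha|$. I will assume $g$ is Borel, as is implicit in the lemma, which avoids the issue.
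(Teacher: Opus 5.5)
Your proposal is correct and follows essentially the same route as the paper's proof: Tonelli to move the $|\alpha|$-integral outside, the translation $t=u+\theta$ for each fixed $\theta$ (with the indicator $\mathbf 1_{[0,T]}(u+\theta)$ permitting restriction to $u\leqslant T$), and Tonelli again to swap back. Your explicit verification that $(u,\theta)\mapsto g(u+\theta,\theta)\mathbf 1_{[0,T]}(u+\theta)$ is jointly measurable, via composition with the shear map, is a detail the paper leaves implicit and is a welcome addition.
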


\begin{proof}
By Tonelli's theorem,
\[
\int_0^T\int_0^{+\infty}g(t,\theta)\,|\alpha|(d\theta)dt
=
\int_0^{+\infty}\int_0^T g(t,\theta)\,dt\,|\alpha|(d\theta).
\]
Fix $\theta\geqslant0$. With the substitution $u=t-\theta$, so that $t=u+\theta$ and $dt=du$, we obtain
\[
\int_0^T g(t,\theta)\,dt
=
\int_{-\theta}^{T-\theta}g(u+\theta,\theta)\,du
=
\int_{-\infty}^T g(u+\theta,\theta)\mathbf 1_{[-\theta,T-\theta]}(u)\,du.
\]
Since
\[
\mathbf 1_{[-\theta,T-\theta]}(u)=\mathbf 1_{[0,T]}(u+\theta),
\]
we get
\[
\int_0^T g(t,\theta)\,dt
=
\int_{-\infty}^T g(u+\theta,\theta)\mathbf 1_{[0,T]}(u+\theta)\,du.
\]
Integrating this identity with respect to $|\alpha|(d\theta)$ and applying Tonelli's theorem once more yields \eqref{eq:cv-alpha}.
\end{proof}

\subsection{The induced \texorpdfstring{$L^2$}{L2}-operator and its Hilbert adjoint.}\label{The induced L2-operator and its Hilbert adjoint}

We now pass from the pathwise derivative to the associated infinite-window operator on Hilbert spaces. For $Z\in L^2( -\infty,T;E)$, define
\begin{equation}\label{eq:R-def}
(\mathcal RZ)(t)
:=
\int_0^{+\infty}K(t,\theta)Z(t-\theta)\,\alpha(d\theta),
\qquad t\in[0,T],
\end{equation}
whenever the integral is well defined.

\begin{theorem}\label{thm:R-bounded}
Under Assumption~\ref{ass:lag-kernel}, the following statements hold.
\begin{enumerate}
\item[(i)] For every $Z\in L^2( -\infty,T;E)$, the right-hand side of \eqref{eq:R-def} is well defined for almost every $t\in[0,T]$ as a Bochner integral in $F$.
\item[(ii)] The map $Z\mapsto \mathcal RZ$ is well defined on $L^2( -\infty,T;E)$; namely, if $Z_1=Z_2$ almost everywhere on $( -\infty,T]$, then $\mathcal RZ_1=\mathcal RZ_2$ almost everywhere on $[0,T]$.
\item[(iii)] $\mathcal R$ is a bounded linear operator from $L^2(-\infty,T;E)$ to $L^2(0,T;F)$ and satisfies
\begin{equation}\label{eq:R-bound}
\|\mathcal RZ\|_{L^2(0,T;F)}^2
\leqslant M_0M\,\|Z\|_{L^2(-\infty,T;E)}^2,
\qquad Z\in L^2(-\infty,T;E).
\end{equation}
\end{enumerate}
\end{theorem}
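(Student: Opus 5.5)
The plan is a Schur-test argument: weight the integrand by the kernel norm, apply Cauchy--Schwarz against $|\alpha|$, and then use the change of variables of Lemma~\ref{lem:cv-alpha} to move the $\theta$-integration onto the source time $u=t-\theta$. The whole argument rests on one scalar estimate. Fix $Z\in L^2(-\infty,T;E)$ and, for a chosen representative, set
\[
\Phi(t):=\int_0^{+\infty}\|K(t,\theta)\|_{\mathcal L(E,F)}\|Z(t-\theta)\|_E\,|\alpha|(d\theta)\in[0,+\infty].
\]
This is a measurable function of $t$. Indeed, by Assumption~\ref{ass:lag-kernel}(i) and separability of $E$, the map $(t,\theta)\mapsto\|K(t,\theta)\|_{\mathcal L(E,F)}$ is a supremum of countably many measurable functions $\|K(t,\theta)e_k\|_F$ over a dense subset of the unit ball. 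Also, $(t,\theta)\mapsto Z(t-\theta)$ is jointly measurable, and Tonelli applies.

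By Cauchy--Schwarz with respect to the measure $\|K(t,\theta)\|\,|\alpha|(d\theta)$,
\[
\Phi(t)^2\leqslant\Big(\int_0^{+\infty}\|K(t,\theta)\|\,|\alpha|(d\theta)\Big)\Big(\int_0^{+\infty}\|K(t,\theta)\|\,\|Z(t-\theta)\|_E^2\,|\alpha|(d\theta)\Big)\leqslant M_0\int_0^{+\infty}\|K(t,\theta)\|\,\|Z(t-\theta)\|_E^2\,|\alpha|(d\theta).
\]
Integrate over $t\in[0,T]$ and apply Lemma~\ref{lem:cv-alpha} with $g(t,\theta)=\|K(t,\theta)\|\,\|Z(t-\theta)\|_E^2$. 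This turns the right side into
\[
M_0\int_{-\infty}^T\|Z(u)\|_E^2\int_0^{+\infty}\|K(u+\theta,\theta)\|\mathbf 1_{[0,T]}(u+\theta)\,|\alpha|(d\theta)\,du\leqslant M_0M\|Z\|^2_{L^2(-\infty,T;E)}.
\]
Hence $\int_0^T\Phi^2\,dt\leqslant M_0M\|Z\|^2<\infty$.

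Each part of the theorem then follows from this estimate.

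For (i): $\Phi(t)<\infty$ for a.e.\ $t$. For such $t$, the function $\theta\mapsto K(t,\theta)Z(t-\theta)$ is strongly measurable: $Z$ is a limit of simple functions, and $K(t,\cdot)e$ is measurable for fixed $e$. Its norm is $|\alpha|$-integrable, so the Bochner integral against the signed measure $\alpha=\jmath|\alpha|$ exists, namely $\int K(t,\theta)Z(t-\theta)\jmath(\theta)\,|\alpha|(d\theta)$. Strong measurability of $t\mapsto(\mathcal RZ)(t)$ follows by pairing with vectors of $F$ and using Fubini together with Pettis' theorem, since $F$ is separable.

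For (ii): apply the estimate to $Z_1-Z_2$, which is $0$ in $L^2$. Then $\Phi=0$ a.e., so $\|(\mathcal RZ_1-\mathcal RZ_2)(t)\|_F\leqslant\Phi(t)=0$ for a.e.\ $t$. This is the point where the change of variables matters most. A null set in $u$ need not look null along a single $\theta$-slice when $\alpha$ has atoms, but Lemma~\ref{lem:cv-alpha} shows that integrating in $t$ makes this harmless.

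For (iii): linearity is clear, and $\|(\mathcal RZ)(t)\|_F\leqslant\Phi(t)$ gives \eqref{eq:R-bound} directly.

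I expect the main obstacle to be the measure-theoretic bookkeeping rather than the inequality itself. The needed facts are joint measurability of $\|K(t,\theta)\|$ and of $Z(t-\theta)$, and well-definedness when $\alpha$ has atoms. Both are settled by working only with nonnegative integrands under Tonelli, via Lemma~\ref{lem:cv-alpha}, before passing to the vector-valued integrals.
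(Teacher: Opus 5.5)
Your proposal is correct and follows essentially the same route as the paper. Both arguments apply Cauchy--Schwarz against the weighted measure $\|K(t,\theta)\|_{\mathcal L(E,F)}\,|\alpha|(d\theta)$, use the change of variables of Lemma~\ref{lem:cv-alpha} to obtain the constant $M_0M$, and derive (ii) by applying the same estimate to $Z_1-Z_2$. The differences are only cosmetic: you package everything into the single scalar function $\Phi$, you get strong measurability of $t\mapsto(\mathcal RZ)(t)$ via Pettis' theorem rather than the Bochner--Fubini theorem, and you explicitly justify measurability of $(t,\theta)\mapsto\|K(t,\theta)\|_{\mathcal L(E,F)}$, which the paper uses without comment.
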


\begin{proof}
Let
\[
\widetilde K(t,\theta):=\jmath(\theta)K(t,\theta),
\qquad (t,\theta)\in[0,T]\times[0,+\infty).
\]
Then
\[
\|\widetilde K(t,\theta)\|_{\mathcal L(E,F)}=\|K(t,\theta)\|_{\mathcal L(E,F)}
\]
and
\[
\int_0^{+\infty}K(t,\theta)Z(t-\theta)\,\alpha(d\theta)
=
\int_0^{+\infty}\widetilde K(t,\theta)Z(t-\theta)\,|\alpha|(d\theta).
\]
Thus, it is enough to work with the positive measure $|\alpha|$.
\\
\\
\indent
We start by proving (i). Fix $Z\in L^2(-\infty,T;E)$. Choose a strongly measurable representative of $Z$, still denoted by $Z$. Since $E$ is separable, there exists a sequence of simple measurable functions $(Z_n)_{n\geqslant1}$ such that $Z_n(s)\to Z(s)$ for almost every $s\leqslant T$. Modifying $Z$ on a Lebesgue-null set if necessary, we may and do assume that $Z_n(s)\to Z(s)$ for every $s\leqslant T$.
\\
\noindent
For each $n$, write
\[
Z_n(s)=\sum_{m=1}^{N_n}e_{n,m}\mathbf 1_{A_{n,m}}(s),
\]
where $e_{n,m}\in E$ and $A_{n,m}\subset(-\infty,T]$ are measurable. Then
\[
(t,\theta)\longmapsto \widetilde K(t,\theta)Z_n(t-\theta)
=
\sum_{m=1}^{N_n}\widetilde K(t,\theta)e_{n,m}\,\mathbf 1_{A_{n,m}}(t-\theta)
\]
is jointly measurable because $(t,\theta)\mapsto \widetilde K(t,\theta)e_{n,m}$ is measurable by Assumption~\ref{ass:lag-kernel} (i), and $(t,\theta)\mapsto t-\theta$ is measurable. Passing to the pointwise limit in $n$, we conclude that
\[
(t,\theta)\longmapsto \widetilde K(t,\theta)Z(t-\theta)
\]
is jointly measurable.
Define the nonnegative measurable function
\[
g(t,\theta):=\|Z(t-\theta)\|_E^2\,\|K(t,\theta)\|_{\mathcal L(E,F)},
\qquad (t,\theta)\in[0,T]\times[0,+\infty).
\]
Applying Lemma~\ref{lem:cv-alpha} to $g$, we obtain
\begin{align*}
\int_0^T\int_0^{+\infty}g(t,\theta)\,|\alpha|(d\theta)dt
&=
\int_{-\infty}^T\int_0^{+\infty}\|Z(u)\|_E^2\,\|K(u+\theta,\theta)\|_{\mathcal L(E,F)}\mathbf 1_{[0,T]}(u+\theta)\,|\alpha|(d\theta)du\\
&\leqslant
M\int_{-\infty}^T\|Z(u)\|_E^2\,du
<\infty.
\end{align*}
Therefore, by Tonelli's theorem,
\begin{equation}\label{eq:g-finite}
\int_0^{+\infty}\|Z(t-\theta)\|_E^2\,\|K(t,\theta)\|_{\mathcal L(E,F)}\,|\alpha|(d\theta)<\infty
\end{equation}
for almost every $t\in[0,T]$.
Fix such a $t$. Since
\[
\|\widetilde K(t,\theta)Z(t-\theta)\|_F
\leqslant
\|K(t,\theta)\|_{\mathcal L(E,F)}\,\|Z(t-\theta)\|_E,
\]
then applying the Cauchy--Schwarz inequality with respect to the finite measure
$
\|K(t,\theta)\|_{\mathcal L(E,F)}\,|\alpha|(d\theta)
$
gives
\begin{align*}
\int_0^{+\infty}\|\widetilde K(t,\theta)Z(t-\theta)\|_F\,|\alpha|(d\theta)
&\leqslant
\left(\int_0^{+\infty}\|Z(t-\theta)\|_E^2\,\|K(t,\theta)\|_{\mathcal L(E,F)}\,|\alpha|(d\theta)\right)^{1/2}\\
&\qquad\times
\left(\int_0^{+\infty}\|K(t,\theta)\|_{\mathcal L(E,F)}\,|\alpha|(d\theta)\right)^{1/2}\\
&\leqslant
\sqrt{M_0}
\left(\int_0^{+\infty}\|Z(t-\theta)\|_E^2\,\|K(t,\theta)\|_{\mathcal L(E,F)}\,|\alpha|(d\theta)\right)^{1/2}
<\infty,
\end{align*}
where we used Assumption~\ref{ass:lag-kernel} (iii) and \eqref{eq:g-finite}. Hence the Bochner integral
\[
\int_0^{+\infty}\widetilde K(t,\theta)Z(t-\theta)\,|\alpha|(d\theta)
\]
is well defined for almost every $t\in[0,T]$. Thus, we proved (i).
\\
\\
\indent
We next prove (ii). Let $Z_1,Z_2\in L^2(-\infty,T;E)$ satisfy $Z_1=Z_2$ a.e., and set $D:=Z_1-Z_2$. Repeating the previous argument with $D$ in place of $Z$, we obtain
\[
\int_0^T\int_0^{+\infty}\|D(t-\theta)\|_E^2\,\|K(t,\theta)\|_{\mathcal L(E,F)}\,|\alpha|(d\theta)dt=0.
\]
Hence, for almost every $t\in[0,T]$,
\[
\int_0^{+\infty}\|D(t-\theta)\|_E^2\,\|K(t,\theta)\|_{\mathcal L(E,F)}\,|\alpha|(d\theta)=0.
\]
For such a $t$, the same Cauchy--Schwarz estimate yields
\[
\int_0^{+\infty}\|\widetilde K(t,\theta)D(t-\theta)\|_F\,|\alpha|(d\theta)=0,
\]
so that
\[
\int_0^{+\infty}\widetilde K(t,\theta)D(t-\theta)\,|\alpha|(d\theta)=0.
\]
Therefore $\mathcal RZ_1(t)=\mathcal RZ_2(t)$ for almost every $t\in[0,T]$, proving that $\mathcal R$ is well defined on equivalence classes.
\\
\\
\indent
We now prove the $L^2$ bound \eqref{eq:R-bound}. For almost every $t\in[0,T]$, the preceding estimate gives
\begin{align*}
\|\mathcal RZ(t)\|_F^2
&\leqslant
\left(\int_0^{+\infty}\|\widetilde K(t,\theta)Z(t-\theta)\|_F\,|\alpha|(d\theta)\right)^2\\
&\leqslant
\left(\int_0^{+\infty}\|Z(t-\theta)\|_E^2\,\|K(t,\theta)\|_{\mathcal L(E,F)}\,|\alpha|(d\theta)\right)
\left(\int_0^{+\infty}\|K(t,\theta)\|_{\mathcal L(E,F)}\,|\alpha|(d\theta)\right)\\
&\leqslant
M_0\int_0^{+\infty}\|Z(t-\theta)\|_E^2\,\|K(t,\theta)\|_{\mathcal L(E,F)}\,|\alpha|(d\theta).
\end{align*}
Integrating over $t\in[0,T]$ and applying Lemma~\ref{lem:cv-alpha} again, we obtain
\begin{align*}
\int_0^T\|\mathcal RZ(t)\|_F^2dt
&\leqslant
M_0\int_0^T\int_0^{+\infty}\|Z(t-\theta)\|_E^2\,\|K(t,\theta)\|_{\mathcal L(E,F)}\,|\alpha|(d\theta)dt\\
&=
M_0\int_{-\infty}^T\int_0^{+\infty}\|Z(u)\|_E^2\,\|K(u+\theta,\theta)\|_{\mathcal L(E,F)}\mathbf 1_{[0,T]}(u+\theta)\,|\alpha|(d\theta)du\\
&\leqslant
M_0M\int_{-\infty}^T\|Z(u)\|_E^2du.
\end{align*}
This proves \eqref{eq:R-bound}.
\\
\\
\indent
Finally, we verify that $t\mapsto\mathcal RZ(t)$ is strongly measurable. Define
\[
\Phi(t,\theta):=\widetilde K(t,\theta)Z(t-\theta).
\]
We have already shown that $\Phi$ is jointly measurable. Moreover, the pointwise estimate above shows that
\[
a(t):=\int_0^{+\infty}\|\Phi(t,\theta)\|_F\,|\alpha|(d\theta)
\]
belongs to $L^2(0,T)$ and therefore to $L^1(0,T)$. Hence
\[
\int_0^T\int_0^{+\infty}\|\Phi(t,\theta)\|_F\,|\alpha|(d\theta)dt<\infty.
\]
By the Bochner--Fubini theorem, $t\mapsto\int_0^{+\infty}\Phi(t,\theta)\,|\alpha|(d\theta)=\mathcal RZ(t)$ is strongly measurable. Together with \eqref{eq:R-bound}, this proves (iii).
\end{proof}

\begin{remark}
Theorem~\ref{thm:R-bounded} already covers the Dirac case. If $\alpha=\delta_\delta$, then
\[
(\mathcal RZ)(t)=K(t,\delta)Z(t-\delta)
\qquad\text{for almost every }t\in[0,T],
\]
and the map $Z\mapsto \mathcal RZ$ is well defined on $L^2(-\infty,T;E)$ because the ambiguity of pointwise representatives disappears after integration in $t$.
\end{remark}

\begin{theorem}\label{thm:R-adjoint}
Let $\mathcal R:L^2(-\infty,T;E)\to L^2(0,T;F)$ be the bounded operator of Theorem~\ref{thm:R-bounded}. Then its Hilbert adjoint $\mathcal R^*:L^2(0,T;F)\to L^2(-\infty,T;E)$ is given by
\begin{equation}\label{eq:R-star}
(\mathcal R^*Q)(u)
=
\int_0^{+\infty}K(u+\theta,\theta)^*Q(u+\theta)\mathbf 1_{[0,T]}(u+\theta)\,\alpha(d\theta)
\end{equation}
for almost every $u\in(-\infty,T]$. Moreover,
\begin{equation}\label{eq:R-star-bound}
\|\mathcal R^*Q\|_{L^2(-\infty,T;E)}^2
\leqslant M_0M\,\|Q\|_{L^2(0,T;F)}^2,
\qquad Q\in L^2(0,T;F).
\end{equation}
\end{theorem}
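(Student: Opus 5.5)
Since $\mathcal R$ is bounded by Theorem~\ref{thm:R-bounded}, its Hilbert adjoint $\mathcal R^*$ exists and is unique. The bound \eqref{eq:R-star-bound} then follows at once from $\|\mathcal R^*\|=\|\mathcal R\|$ together with \eqref{eq:R-bound}. What remains is to identify $\mathcal R^*$ with the explicit formula \eqref{eq:R-star}. I would do this in three steps.

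\emph{Step 1: the formula is well defined.} Let $\mathcal S Q$ denote the right-hand side of \eqref{eq:R-star}. I would show that $\mathcal S Q$ is a well-defined element of $L^2(-\infty,T;E)$ by repeating the argument of Theorem~\ref{thm:R-bounded} with the roles of the variables swapped. As before, write $\alpha=\jmath|\alpha|$ and note that $\|K(u+\theta,\theta)^*\|_{\mathcal L(F,E)}=\|K(u+\theta,\theta)\|_{\mathcal L(E,F)}$. Joint measurability of $(u,\theta)\mapsto K(u+\theta,\theta)^*Q(u+\theta)$ follows from weak measurability: for $e\in E$, the map $\langle K(u+\theta,\theta)^*q,e\rangle_E=\langle q,K(u+\theta,\theta)e\rangle_F$ is measurable, $Q$ is approximated by simple functions, and Pettis' theorem applies because the spaces are separable. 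For fixed $u$, I apply Cauchy--Schwarz with respect to the finite measure $\|K(u+\theta,\theta)\|\mathbf 1_{[0,T]}(u+\theta)\,|\alpha|(d\theta)$, whose mass is at most $M$. This gives
\[
\|\mathcal SQ(u)\|_E^2\leqslant M\int_0^{+\infty}\|Q(u+\theta)\|_F^2\|K(u+\theta,\theta)\|\mathbf 1_{[0,T]}(u+\theta)\,|\alpha|(d\theta).
\]
Integrating in $u$ and using Lemma~\ref{lem:cv-alpha} in reverse, with $g(t,\theta)=\|Q(t)\|_F^2\|K(t,\theta)\|$, turns the right-hand side into $\int_0^T\|Q(t)\|_F^2\int_0^{+\infty}\|K(t,\theta)\|\,|\alpha|(d\theta)\,dt\leqslant M_0\|Q\|^2_{L^2(0,T;F)}$. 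So $\mathcal S Q$ exists for a.e. $u$ and satisfies the stated bound directly. Independence of representatives follows exactly as in part (ii) of Theorem~\ref{thm:R-bounded}.

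\emph{Step 2: the duality identity.} For $Z\in L^2(-\infty,T;E)$ and $Q\in L^2(0,T;F)$, I would compute
\[
\langle \mathcal RZ,Q\rangle_{L^2(0,T;F)}=\int_0^T\int_0^{+\infty}\jmath(\theta)\langle K(t,\theta)Z(t-\theta),Q(t)\rangle_F\,|\alpha|(d\theta)\,dt.
\]
Moving $K(t,\theta)$ to the other side of the inner product gives $\langle Z(t-\theta),K(t,\theta)^*Q(t)\rangle_E$. I then change variables $u=t-\theta$ via Lemma~\ref{lem:cv-alpha} and exchange the order of integration. This yields $\int_{-\infty}^T\langle Z(u),\mathcal SQ(u)\rangle_E\,du$, where the inner integral in $\theta$ is pulled inside the Bochner integral by linearity and continuity of the inner product. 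Since $\langle \mathcal RZ,Q\rangle=\langle Z,\mathcal SQ\rangle$ for all $Z$ and $Q$, uniqueness of the adjoint gives $\mathcal R^*=\mathcal S$.

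\emph{Main obstacle.} The delicate point is justifying Fubini for the \emph{signed} integrand, since Lemma~\ref{lem:cv-alpha} is stated only for nonnegative $g$. I would apply the lemma to the absolute integrand $\|K(t,\theta)\|\,\|Z(t-\theta)\|_E\,\|Q(t)\|_F$ and bound it by Young's inequality: it is at most $\tfrac12\|K\|\|Z(t-\theta)\|^2+\tfrac12\|K\|\|Q(t)\|^2$. Each piece is integrable, with total bound $\tfrac12 M\|Z\|^2+\tfrac12 M_0\|Q\|^2$, by the computations in Theorem~\ref{thm:R-bounded} and Step 1. With absolute integrability established, the same substitution applied to the positive and negative parts, together with Fubini, legitimizes every interchange.
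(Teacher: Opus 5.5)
Your proposal is correct and follows essentially the same route as the paper. It establishes well-definedness of the formula by Cauchy--Schwarz against the weighted measure $\|K(u+\theta,\theta)\|\mathbf 1_{[0,T]}(u+\theta)\,|\alpha|(d\theta)$ together with Lemma~\ref{lem:cv-alpha}, then proves absolute integrability and applies Fubini with the substitution $u=t-\theta$ to get the duality identity. The differences are cosmetic: you also obtain the norm bound abstractly from $\|\mathcal R^*\|=\|\mathcal R\|$, you use Young's inequality where the paper uses Cauchy--Schwarz on $[0,T]\times[0,+\infty)$, and you make explicit (via Pettis) the joint measurability of the adjoint integrand, which the paper takes for granted.
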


\begin{proof}
Fix $Q\in L^2(0,T;F)$ and extend it by $0$ outside $[0,T]$; we keep the same notation. As above, it is enough to work with the positive measure $|\alpha|$ and the kernel $\widetilde K=\jmath K$.

We first show that the right-hand side of \eqref{eq:R-star} is well defined for almost every $u\in(-\infty,T]$. Define
\[
h(u,\theta):=\|Q(u+\theta)\|_F^2\,\|K(u+\theta,\theta)\|_{\mathcal L(E,F)}\mathbf 1_{[0,T]}(u+\theta).
\]
This is a nonnegative measurable function on $(-\infty,T]\times[0,+\infty)$. Applying Lemma~\ref{lem:cv-alpha} with
\[
g(t,\theta):=\|Q(t)\|_F^2\,\|K(t,\theta)\|_{\mathcal L(E,F)},
\]
we obtain
\begin{align*}
\int_{-\infty}^T\int_0^{+\infty}h(u,\theta)\,|\alpha|(d\theta)du
&=
\int_0^T\int_0^{+\infty}\|Q(t)\|_F^2\,\|K(t,\theta)\|_{\mathcal L(E,F)}\,|\alpha|(d\theta)dt\\
&\leqslant
M_0\int_0^T\|Q(t)\|_F^2dt
<\infty.
\end{align*}
Therefore, for almost every $u\in(-\infty,T]$,
\begin{equation}\label{eq:h-finite}
\int_0^{+\infty}\|Q(u+\theta)\|_F^2\,\|K(u+\theta,\theta)\|_{\mathcal L(E,F)}\mathbf 1_{[0,T]}(u+\theta)\,|\alpha|(d\theta)<\infty.
\end{equation}
For such a $u$, applying the Cauchy--Schwarz inequality with respect to the measure
\[
\|K(u+\theta,\theta)\|_{\mathcal L(E,F)}\mathbf 1_{[0,T]}(u+\theta)\,|\alpha|(d\theta)
\]
gives
\begin{align*}
&\int_0^{+\infty}\|\widetilde K(u+\theta,\theta)^*Q(u+\theta)\|_E\mathbf 1_{[0,T]}(u+\theta)\,|\alpha|(d\theta)\\
&\qquad\leqslant
\left(\int_0^{+\infty}\|Q(u+\theta)\|_F^2\,\|K(u+\theta,\theta)\|_{\mathcal L(E,F)}\mathbf 1_{[0,T]}(u+\theta)\,|\alpha|(d\theta)\right)^{1/2}\\
&\qquad\quad\times
\left(\int_0^{+\infty}\|K(u+\theta,\theta)\|_{\mathcal L(E,F)}\mathbf 1_{[0,T]}(u+\theta)\,|\alpha|(d\theta)\right)^{1/2}\\
&\qquad\leqslant
\sqrt{M}
\left(\int_0^{+\infty}\|Q(u+\theta)\|_F^2\,\|K(u+\theta,\theta)\|_{\mathcal L(E,F)}\mathbf 1_{[0,T]}(u+\theta)\,|\alpha|(d\theta)\right)^{1/2}
<\infty.
\end{align*}
Hence the Bochner integral in \eqref{eq:R-star} is well defined for almost every $u$.
\\
\\
\indent
We next prove \eqref{eq:R-star-bound}. For almost every $u\in(-\infty,T]$,
\begin{align*}
\|\mathcal R^*Q(u)\|_E^2
&\leqslant
\left(\int_0^{+\infty}\|\widetilde K(u+\theta,\theta)^*Q(u+\theta)\|_E\mathbf 1_{[0,T]}(u+\theta)\,|\alpha|(d\theta)\right)^2\\
&\leqslant
\left(\int_0^{+\infty}\|Q(u+\theta)\|_F^2\,\|K(u+\theta,\theta)\|_{\mathcal L(E,F)}\mathbf 1_{[0,T]}(u+\theta)\,|\alpha|(d\theta)\right)\\
&\qquad\times
\left(\int_0^{+\infty}\|K(u+\theta,\theta)\|_{\mathcal L(E,F)}\mathbf 1_{[0,T]}(u+\theta)\,|\alpha|(d\theta)\right)\\
&\leqslant
M\int_0^{+\infty}\|Q(u+\theta)\|_F^2\,\|K(u+\theta,\theta)\|_{\mathcal L(E,F)}\mathbf 1_{[0,T]}(u+\theta)\,|\alpha|(d\theta).
\end{align*}
Integrating in $u$ and applying Lemma~\ref{lem:cv-alpha} once again yields
\begin{align*}
\int_{-\infty}^T\|\mathcal R^*Q(u)\|_E^2du
&\leqslant
M\int_{-\infty}^T\int_0^{+\infty}\|Q(u+\theta)\|_F^2\,\|K(u+\theta,\theta)\|_{\mathcal L(E,F)}\mathbf 1_{[0,T]}(u+\theta)\,|\alpha|(d\theta)du\\
&=
M\int_0^T\int_0^{+\infty}\|Q(t)\|_F^2\,\|K(t,\theta)\|_{\mathcal L(E,F)}\,|\alpha|(d\theta)dt\\
&\leqslant
M_0M\int_0^T\|Q(t)\|_F^2dt.
\end{align*}
This proves \eqref{eq:R-star-bound}.
\\
\\
\indent
It remains to verify the adjoint identity. Let $Z\in L^2(-\infty,T;E)$. By the Cauchy--Schwarz inequality on the product space $[0,T]\times[0,+\infty)$ endowed with the measure $dt\otimes|\alpha|(d\theta)$,
\begin{align*}
&\int_0^T\int_0^{+\infty}\bigl|\langle \widetilde K(t,\theta)Z(t-\theta),Q(t)\rangle_F\bigr|\,|\alpha|(d\theta)dt\\
&\qquad\leqslant
\left(\int_0^T\int_0^{+\infty}\|Z(t-\theta)\|_E^2\,\|K(t,\theta)\|_{\mathcal L(E,F)}\,|\alpha|(d\theta)dt\right)^{1/2}\\
&\qquad\quad\times
\left(\int_0^T\int_0^{+\infty}\|Q(t)\|_F^2\,\|K(t,\theta)\|_{\mathcal L(E,F)}\,|\alpha|(d\theta)dt\right)^{1/2}\\
&\qquad\leqslant
\sqrt{M}\,\|Z\|_{L^2(-\infty,T;E)}\,\sqrt{M_0}\,\|Q\|_{L^2(0,T;F)}<\infty.
\end{align*}
Hence Fubini's theorem is applicable, and we compute
\begin{align*}
\int_0^T\langle \mathcal RZ(t),Q(t)\rangle_Fdt
&=
\int_0^T\int_0^{+\infty}\langle \widetilde K(t,\theta)Z(t-\theta),Q(t)\rangle_F\,|\alpha|(d\theta)dt\\
&=
\int_0^T\int_0^{+\infty}\langle Z(t-\theta),\widetilde K(t,\theta)^*Q(t)\rangle_E\,|\alpha|(d\theta)dt.
\end{align*}
For fixed $\theta\geqslant0$, substitute $u=t-\theta$, so that $t=u+\theta$ and $dt=du$. Then
\begin{align*}
&\int_0^T\int_0^{+\infty}\langle Z(t-\theta),\widetilde K(t,\theta)^*Q(t)\rangle_E\,|\alpha|(d\theta)dt\\
&\qquad=
\int_0^{+\infty}\int_{-\theta}^{T-\theta}\langle Z(u),\widetilde K(u+\theta,\theta)^*Q(u+\theta)\rangle_E\,du\,|\alpha|(d\theta)\\
&\qquad=
\int_0^{+\infty}\int_{-\infty}^T\langle Z(u),\widetilde K(u+\theta,\theta)^*Q(u+\theta)\rangle_E\mathbf 1_{[0,T]}(u+\theta)\,du\,|\alpha|(d\theta).
\end{align*}
The absolute integrability proved above allows us to apply Fubini's theorem again and obtain
\begin{align*}
\int_0^T\langle \mathcal RZ(t),Q(t)\rangle_Fdt
&=
\int_{-\infty}^T\left\langle Z(u),\int_0^{+\infty}\widetilde K(u+\theta,\theta)^*Q(u+\theta)\mathbf 1_{[0,T]}(u+\theta)\,|\alpha|(d\theta)\right\rangle_Edu\\
&=
\int_{-\infty}^T\langle Z(u),\mathcal R^*Q(u)\rangle_Edu.
\end{align*}
Since this identity holds for every $Z\in L^2(-\infty,T;E)$, the formula given by \eqref{eq:R-star} is the Hilbert adjoint of $\mathcal R$.
\end{proof}

\begin{remark}
If $\alpha=\delta_\delta$, then \eqref{eq:R-star} reduces to
\[
(\mathcal R^*Q)(u)=K(u+\delta,\delta)^*Q(u+\delta)\mathbf 1_{[0,T]}(u+\delta)
\qquad\text{for almost every }u\leqslant T.
\]
Thus, the point-delay case is included without any additional arguments.
\end{remark}

\subsection{Adapted adjoint.}

Building on the $L^2$-duality results for non-anticipative operators established in Section~\ref{The induced L2-operator and its Hilbert adjoint}, we now extend these identities to the space $\mathcal{M}^2_{\mathcal{F}}$, which is the standard space in stochastic analysis, by incorporating conditional expectation to enforce adaptedness on the adjoint operator.

\begin{proposition}\label{prop:adapted-adjoint}
Assume now that
\[
K:\Omega\times[0,T]\times[0,+\infty)\to\mathcal L(E,F)
\]
satisfies the following conditions:
\begin{enumerate}
\item[(i)] for every $e\in E$, the map $(\omega,t,\theta)\mapsto K(\omega,t,\theta)e$ is jointly measurable;
\item[(ii)] for each $(t,\theta)$, the random operator $K(t,\theta)$ is $\mathcal F_t$-measurable;
\item[(iii)] there exist deterministic constants $M_0,M<\infty$ such that, almost surely,
\[
\sup_{t\in[0,T]}\int_0^{+\infty}\|K(t,\theta)\|_{\mathcal L(E,F)}\,|\alpha|(d\theta)\leqslant M_0
\]
and
\[
\sup_{u\in(-\infty,T]}\int_0^{+\infty}\|K(u+\theta,\theta)\|_{\mathcal L(E,F)}\mathbf 1_{[0,T]}(u+\theta)\,|\alpha|(d\theta)\leqslant M.
\]
\end{enumerate}
Let $Z\in \mathcal{M}^2_{\mathcal F}( -\infty,T;E)$ and $Q\in \mathcal{M}^2_{\mathcal F}(0,T;F)$, and assume in addition that
\[
Z(t)=0,
\qquad t\leqslant0.
\]
Define
\[
(\mathcal RZ)(t):=\int_0^{+\infty}K(t,\theta)Z(t-\theta)\,\alpha(d\theta),
\qquad t\in[0,T],
\]
and
\begin{equation}\label{eq:adapted-adjoint}
(\mathcal A^*Q)(t)
:=
E_t\left[\int_0^{+\infty}K(t+\theta,\theta)^*Q(t+\theta)\mathbf 1_{[0,T]}(t+\theta)\,\alpha(d\theta)\right],
\qquad t\in[0,T].
\end{equation}
where $E_t[\cdot]$ denotes the conditional expectation with respect to $\mathcal{F}_t$. Here $\mathcal A^*Q$ is understood as the optional projection of the square-integrable measurable process $t\mapsto (\mathcal R^*Q)(t)$; equivalently, \eqref{eq:adapted-adjoint} holds for almost every $t\in[0,T]$, almost surely.
Then $\mathcal RZ\in \mathcal{M}^2_{\mathcal F}(0,T;F)$, $\mathcal A^*Q\in \mathcal{M}^2_{\mathcal F}(0,T;E)$, and the following duality identity holds
\begin{equation}\label{eq:adapted-duality}
\mathbb E\bigg[\int_0^T\langle \mathcal RZ(t),Q(t)\rangle_Fdt\bigg]
=
\mathbb E\bigg[\int_0^T\langle Z(t),(\mathcal A^*Q)(t)\rangle_Edt\bigg].
\end{equation}
Moreover,
\begin{equation}\label{eq:adapted-bound}
\mathbb E\bigg[\int_0^T\|(\mathcal A^*Q)(t)\|_E^2dt\bigg]
\leqslant
M_0M\,\mathbb E\bigg[\int_0^T\|Q(t)\|_F^2dt\bigg].
\end{equation}
\end{proposition}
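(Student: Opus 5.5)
The plan is to apply Theorem~\ref{thm:R-bounded} and Theorem~\ref{thm:R-adjoint} pathwise, for fixed $\omega$, and then take expectations and a conditional expectation. Fix $\omega$ outside a null set on which $Z(\omega,\cdot)\in L^2(-\infty,T;E)$, $Q(\omega,\cdot)\in L^2(0,T;F)$ and the bounds (iii) hold. The deterministic kernel $K(\omega,\cdot,\cdot)$ then satisfies Assumption~\ref{ass:lag-kernel} with the same constants $M_0,M$. So $\mathcal R Z(\omega,\cdot)$ and $\mathcal R^*Q(\omega,\cdot)$ are well defined a.e. Moreover, \eqref{eq:R-bound} and \eqref{eq:R-star-bound} hold for each $\omega$, and the deterministic adjoint identity gives
\[
\int_0^T\langle \mathcal RZ(t),Q(t)\rangle_F\,dt=\int_{-\infty}^T\langle Z(u),(\mathcal R^*Q)(u)\rangle_E\,du
\]
pathwise. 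Since $Z(u)=0$ for $u\leqslant 0$, the right-hand side reduces to $\int_0^T$. Taking expectations in the two norm bounds gives square integrability in $(\omega,t)$. Absolute integrability of both sides, needed to take expectations in the identity, follows from Cauchy--Schwarz and these bounds.

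\textbf{Measurability and adaptedness.} Joint measurability of $(\omega,t,\theta)\mapsto \widetilde K(\omega,t,\theta)Z(\omega,t-\theta)$ is obtained exactly as in the proof of Theorem~\ref{thm:R-bounded}, via simple approximations, now with $\omega$ included and using hypothesis (i). The Bochner--Fubini theorem then makes $\mathcal RZ$ and $\mathcal R^*Q$ jointly measurable in $(\omega,t)$. For progressive measurability of $\mathcal RZ$, note that for $s\leqslant t$, $K(t,\theta)Z(t-\theta)$ is $\mathcal F_t$-measurable by (ii) and adaptedness of $Z$. A cleaner route is to restrict to $\Omega\times[0,r]$ for each $r$: on that set only values of $K$ and $Z$ up to time $r$ enter, and progressive measurability of $Z$ carries over. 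If progressive measurability of $K$ in $(\omega,t)$ is not available directly from (i)--(ii), I would replace $\mathcal RZ$ by its progressive modification. This is harmless because it is measurable, adapted and in $L^2$. For $\mathcal A^*Q$, define it as the optional projection of the bounded-$L^2$ measurable process $\mathcal R^*Q$. The optional projection exists, since $\mathcal R^*Q\in L^2(\Omega\times[0,T])$ allows truncation followed by the standard theorem for each coordinate in a separable Hilbert space. It is optional, hence progressive, and for a.e. $t$ it equals $E_t[(\mathcal R^*Q)(t)]$ a.s.

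\textbf{Duality identity and bound.} Since $Z$ is progressively measurable, a Fubini argument gives $\mathbb E\int_0^T\langle Z(t),(\mathcal R^*Q)(t)\rangle dt=\int_0^T\mathbb E[\langle Z(t),E_t(\mathcal R^*Q)(t)\rangle]dt$. This uses the tower property together with the $\mathcal F_t$-measurability of $Z(t)$, and the inner product and conditional expectation commute because $Z(t)$ is $\mathcal F_t$-measurable and square integrable. The result is \eqref{eq:adapted-duality}. For \eqref{eq:adapted-bound}, conditional Jensen gives $\|E_t[X]\|_E^2\leqslant E_t\|X\|_E^2$, so $\mathbb E\int_0^T\|\mathcal A^*Q\|^2dt\leqslant \mathbb E\int_{-\infty}^T\|\mathcal R^*Q\|^2du\leqslant M_0M\,\mathbb E\int_0^T\|Q\|^2dt$ by \eqref{eq:R-star-bound} pathwise.

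\textbf{Main obstacle.} The delicate step is the measure-theoretic one: making sense of $E_t[(\mathcal R^*Q)(t)]$ simultaneously for a.e. $t$ as a jointly measurable, adapted process. This is why I would go through the optional projection, rather than through conditional expectations taken for each $t$ separately, and then verify the identity $\mathbb E[\langle Z(t),{}^o\!X(t)\rangle]=\mathbb E[\langle Z(t),X(t)\rangle]$ for a.e. $t$.
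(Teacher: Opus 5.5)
Your proposal is correct and follows essentially the same route as the paper. Both arguments apply Theorems~\ref{thm:R-bounded} and~\ref{thm:R-adjoint} pathwise for a.e.\ $\omega$, integrate the resulting bounds and the pathwise adjoint identity (using $Z=0$ on $(-\infty,0]$), define $\mathcal A^*Q$ as the optional projection of $\mathcal R^*Q$, and conclude via the $\mathcal F_t$-measurability of $Z(t)$ and conditional Jensen. Your extra care on two points the paper passes over quickly does not change the argument: progressive measurability of $\mathcal RZ$ (via a progressive modification), and existence of the optional projection for a Hilbert-valued process that is only in $L^2(dP\otimes dt)$.
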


\begin{proof}
Fix $\omega$ outside a null set on which the bounds in (iii) hold. For this $\omega$, the kernel $K(\omega,\cdot,\cdot)$ satisfies the assumptions of Theorems~\ref{thm:R-bounded} and~\ref{thm:R-adjoint} with the same constants $M_0$ and $M$. Applying those theorems pathwise gives, for almost every $\omega$,
\[
\int_0^T\|\mathcal RZ(t,\omega)\|_F^2dt
\leqslant
M_0M\int_{-\infty}^T\|Z(u,\omega)\|_E^2du,
\]
and
\[
\int_{-\infty}^T\|\mathcal R^*Q(u,\omega)\|_E^2du
\leqslant
M_0M\int_0^T\|Q(t,\omega)\|_F^2dt,
\]
where
\[
(\mathcal R^*Q)(u)
:=
\int_0^{+\infty}K(u+\theta,\theta)^*Q(u+\theta)\mathbf 1_{[0,T]}(u+\theta)\,\alpha(d\theta).
\]
Integrating with respect to $P$ yields
\[
\mathbb E\bigg[\int_0^T\|\mathcal RZ(t)\|_F^2dt\bigg]
\leqslant
M_0M\,\mathbb E\bigg[\int_{-\infty}^T\|Z(u)\|_E^2du\bigg],
\]
and
\begin{equation}\label{eq:pathwise-rstar-bound}
\mathbb E\bigg[\int_{-\infty}^T\|\mathcal R^*Q(u)\|_E^2du\bigg]
\leqslant
M_0M\,\mathbb E\bigg[\int_0^T\|Q(t)\|_F^2dt\bigg].
\end{equation}
Since $K(t,\theta)$ and $Z(t-\theta)$ are both $\mathcal F_t$-measurable, the integrand in $\mathcal RZ(t)$ is $\mathcal F_t$-measurable for each fixed $t$. Therefore $\mathcal RZ$ is adapted, hence $\mathcal RZ\in \mathcal{M}^2_{\mathcal F}(0,T;F)$.
\\
\\
\indent
Moreover, the process $t\mapsto \mathcal R^*Q(t)$ is jointly measurable and square integrable by \eqref{eq:pathwise-rstar-bound}. Hence its optional projection exists, and by definition this optional projection is $\mathcal A^*Q$. In particular, $\mathcal A^*Q$ is adapted and jointly measurable.
\\
\noindent
Next, by Jensen's inequality for conditional expectation and \eqref{eq:pathwise-rstar-bound},
\begin{align*}
\mathbb E\bigg[\int_0^T\|(\mathcal A^*Q)(t)\|_E^2dt\bigg]
&=
\mathbb E\bigg[\int_0^T\left\|E_t\bigl[(\mathcal R^*Q)(t)\bigr]\right\|_E^2dt\bigg]\\
&\leqslant
\mathbb E\bigg[\int_0^T E_t\bigl[\|\mathcal R^*Q(t)\|_E^2\bigr]dt\bigg]\\
&=
\mathbb E \bigg[\int_0^T\|\mathcal R^*Q(t)\|_E^2dt\bigg]\\
&\leqslant
\mathbb E\bigg[\int_{-\infty}^T\|\mathcal R^*Q(t)\|_E^2dt\bigg]\\
&\leqslant
M_0M\,\mathbb E\bigg[\int_0^T\|Q(t)\|_F^2dt\bigg].
\end{align*}
Thus \eqref{eq:adapted-bound} holds, and in particular $\mathcal A^*Q\in \mathcal{M}^2_{\mathcal F}(0,T;E)$.
\\
\indent

We now prove the duality identity. By the path-wise adjoint formula from Theorem~\ref{thm:R-adjoint} and the assumption \(Z(t)=0\) for \(t\leqslant 0\), we have for almost every \(\omega\),
\[
\int_0^T \langle\mathcal RZ(t,\omega),Q(t,\omega)\rangle_F\,dt
=
\int_0^T \langle Z(t,\omega),\mathcal R^*Q(t,\omega)\rangle_E\,dt.
\]
Moreover, by Cauchy--Schwarz inequality and \eqref{eq:pathwise-rstar-bound},
\[
\mathbb{E}\bigg[\int_0^T \bigl|\langle Z(t),\mathcal R^*Q(t)\rangle_E\bigr|\,dt\bigg]
\leqslant
\left(\mathbb{E}\bigg[\int_0^T \|Z(t)\|_E^2\,dt\bigg]\right)^{1/2}
\left(\mathbb{E}\bigg[\int_0^T \|\mathcal R^*Q(t)\|_E^2\,dt\bigg]\right)^{1/2}
<\infty,
\]
and similarly
\[
\mathbb{E}\bigg[\int_0^T \bigl|\langle \mathcal RZ(t),Q(t)\rangle_F\bigr|\,dt\bigg]<\infty.
\]
Hence we may take expectations in the above path-wise identity and obtain
\[
\mathbb{E}\bigg[\int_0^T \langle\mathcal RZ(t),Q(t)\rangle_F\,dt\bigg]
=
\mathbb{E}\bigg[\int_0^T \langle Z(t),\mathcal R^*Q(t)\rangle_E\,dt\bigg].
\]
Since \(Z(t)\) is \(\mathcal{F}_t\)-measurable, for almost every \(t\in[0,T]\),
\[
\mathbb{E}\bigl[\langle Z(t),\mathcal R^*Q(t)\rangle_E\bigr]
=
\mathbb{E}\bigl[\langle Z(t),E_t[\mathcal R^*Q(t)]\rangle_E\bigr]
=
\mathbb{E}\bigl[\langle Z(t),(\mathcal A^*Q)(t)\rangle_E\bigr].
\]
Integrating over \(t\in[0,T]\) yields \eqref{eq:adapted-duality}.
\end{proof}

\begin{corollary}\label{cor:section3-specialization}
Assume that the control space is a separable Hilbert space $U$, and set $E=F=U$. Let
\[
K(t,\theta)=\phi(t-\theta,t)I_U,
\qquad t\in[0,T],\ \theta\geqslant0.
\]
Then
\[
(\mathcal Rv)(t)=\int_0^{+\infty}\phi(t-\theta,t)v(t-\theta)\,\alpha(d\theta)=:v_d(t),
\qquad t\in[0,T].
\]
Moreover, under Assumption~\ref{ass: delay measure}, the constants in Assumption~\ref{ass:lag-kernel} satisfy
\[
M_0\leqslant C_\phi |\alpha|([0,+\infty)),
\qquad
M\leqslant C_\phi |\alpha|([0,+\infty)).
\]
Hence Proposition~\ref{prop:adapted-adjoint} applies, and the adapted adjoint is
\[
(\mathcal A^*Q)(t)
=
E_t\left[\int_0^{+\infty}\phi(t,t+\theta)Q(t+\theta)\,\alpha(d\theta)\right],
\qquad t\in[0,T].
\]
\end{corollary}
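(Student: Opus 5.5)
The plan is to check directly that the kernel $K(t,\theta)=\phi(t-\theta,t)I_U$ satisfies hypotheses (i)--(iii) of Proposition~\ref{prop:adapted-adjoint}, and then to substitute it into formulas \eqref{eq:R-def} and \eqref{eq:adapted-adjoint}. Assumption~\ref{ass: delay measure} is stated later in the paper. I will use it in the natural form it presumably takes: $\phi$ is jointly measurable (and $\mathcal F_t$-measurable in the appropriate variable when random), it is bounded by $|\phi(s,t)|\leqslant C_\phi$, and $\alpha$ is a finite signed measure on $[0,+\infty)$.

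First, measurability. For each $e\in U$, the map $(t,\theta)\mapsto \phi(t-\theta,t)e$ is measurable, because it is the composition of the measurable map $(t,\theta)\mapsto(t-\theta,t)$ with $\phi$, multiplied by a constant vector. This gives (i). If $\phi$ is random, the adaptedness condition on $\phi(\cdot,t)$ yields (ii).

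Second, the constants. We have $\|K(t,\theta)\|_{\mathcal L(U,U)}=|\phi(t-\theta,t)|\leqslant C_\phi$. Hence
\[
\int_0^{+\infty}\|K(t,\theta)\|\,|\alpha|(d\theta)\leqslant C_\phi|\alpha|([0,+\infty)),
\]
which gives the bound on $M_0$. For $M$, the indicator $\mathbf 1_{[0,T]}(u+\theta)$ is at most $1$, so
\[
\int_0^{+\infty}\|K(u+\theta,\theta)\|\mathbf 1_{[0,T]}(u+\theta)\,|\alpha|(d\theta)\leqslant C_\phi|\alpha|([0,+\infty))
\]
uniformly in $u\leqslant T$.

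Third, substitution. By definition, $\mathcal Rv(t)=\int_0^{+\infty}\phi(t-\theta,t)v(t-\theta)\,\alpha(d\theta)=v_d(t)$. Since $K$ is a scalar multiple of the identity, $K(u+\theta,\theta)^*=\phi(u,u+\theta)I_U$. Putting this into \eqref{eq:adapted-adjoint} with $u=t$ gives
\[
E_t\Big[\int_0^{+\infty}\phi(t,t+\theta)Q(t+\theta)\mathbf 1_{[0,T]}(t+\theta)\,\alpha(d\theta)\Big].
\]
The indicator can be dropped under the convention that $Q$ is extended by zero outside $[0,T]$, which is the convention already used in the proof of Theorem~\ref{thm:R-adjoint}.

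The main subtlety is that Proposition~\ref{prop:adapted-adjoint} asks for $Z(t)=0$ for $t\leqslant0$, whereas controls have the initial segment $\varphi$. I would point out that this hypothesis is needed only for the duality identity \eqref{eq:adapted-duality}, not for the formula for $\mathcal A^*$. In the variational argument it holds automatically, because perturbations $v-\bar v$ vanish on $(-\infty,0]$ since both controls equal $\varphi$ there. The expression for $\mathcal A^*Q$ is therefore valid as stated.
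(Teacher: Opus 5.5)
Your proposal is correct and follows the same route as the paper. You substitute $K(t,\theta)=\phi(t-\theta,t)I_U$ into the definitions of $\mathcal R$ and $\mathcal A^*$, use $K(t+\theta,\theta)^*=\phi(t,t+\theta)I_U$, and drop the indicator $\mathbf 1_{[0,T]}(t+\theta)$. As you note, dropping it is legitimate precisely because $Q$ is extended by zero beyond $T$, which is also what the paper does for $Q_b,Q_\sigma,Q_l$ in the proof of the necessary condition. Your proposal goes further than the paper's proof in three places that it leaves implicit: you check the bounds $M_0,M\leqslant C_\phi|\alpha|([0,+\infty))$ explicitly, you verify the measurability hypotheses, and you point out that the requirement $Z=0$ on $(-\infty,0]$ matters only for the duality identity and holds for $\hat v=v-u$.
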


\begin{proof}
The identity for $\mathcal Rv$ follows immediately from the definition of $K$. Since
\[
K(t+\theta,\theta)=\phi(t,t+\theta)I_U,
\]
formula \eqref{eq:adapted-adjoint} gives
\[
(\mathcal A^*Q)(t)
=
E_t\left[\int_0^{+\infty}\phi(t,t+\theta)I_U^*Q(t+\theta)\mathbf 1_{[0,T]}(t+\theta)\,\alpha(d\theta)\right].
\]
Because $t\in[0,T]$ and $\theta\geqslant0$, the indicator $\mathbf 1_{[0,T]}(t+\theta)$ simply restricts the integral to those $\theta$ for which $t+\theta\leqslant T$. Noticing $I_U^*=I_U$ and
\[
K(t+\theta,\theta)^*
=
\bigl[\phi(t,t+\theta)I_U\bigr]^*
=
\phi(t,t+\theta)I_U^*,
\]
the adapted adjoint reduces to
\[
(\mathcal A^*Q)(t)
=
E_t\left[\int_0^{+\infty}\phi(t,t+\theta)Q(t+\theta)\,\alpha(d\theta)\right].
\]
Taking $Q=H^u_{v}$ coincides with the term appearing in $(\ref{condition neccesary SMP})$.
\end{proof}

	\section{Optimal Control Problem.}\label{section SMP}
		
		In this section, we study the stochastic maximum principle for a stochastic delayed control system described by infinitely delayed SEE. Based on the duality property introduced in Section \ref{chapter Non-anticipative path derivatives and infinite-window dual operators in Hilbert spaces} and the variational inequality, we give the necessary conditions of the SMP in Section \ref{chapter necessary SMP}. Then we present the sufficient optimality conditions in Section \ref{chapter sufficient SMP} with additional convexity assumptions.
		\subsection{Formulation of the optimal control problem.}
		Let $U$ be the control domain with scalar product $\left\langle \cdot, \cdot \right\rangle_{U}$ and norm $\|\cdot\|_U$. Suppose $U$ is a nonempty convex subset of a real separable Hilbert space $\mathcal{U}$ which is identified with its dual space.  Consider the following controlled infinitely delayed stochastic evolution equation on the Hilbert space $H$:
		\begin{equation}\label{ISDDE with control}
			\left\{
			\begin{aligned}
				dX^v(t) &= b(t,X^v_t, v_d(t))dt + \sigma(t,X^v_t,v_d(t))dW(t), \qquad t\in[0,T],\\
				X^v(t) &= \gamma(t), \quad v(t) = \varphi(t), \qquad t\in (-\infty, 0],
			\end{aligned}
			\right.
		\end{equation}
        
		Here, the initial datum $\gamma(\cdot)\in \mathcal{M}_\mathcal{F}^2(-\infty, 0 ; H)$ satisfies $\gamma_0 \in L^2(\mathcal{F}_0;C_\lambda((-\infty,0];H))$ and $\varphi(\cdot)\in \mathcal{M}_\mathcal{F}^2(-\infty, 0 ; U)$. The random functions
		\[
		b:[0,T]\times \Omega\times C_\lambda((-\infty,T];H)\times \mathcal{U}\to H,\quad 
		\sigma:[0,T]\times \Omega\times C_\lambda((-\infty,T];H)\times\mathcal{U} \to \mathcal{L}_2(\mathbb{R}^m,H)
		\]
		satisfy that $b(\cdot,\cdot,x,v)$, $\sigma(\cdot,\cdot,x,v)$ are progressively measurable for each $(x,v)\in C_\lambda((-\infty,T];H)\times \mathcal{U}$ .
		
		To account for the memory effect in the control input in (\ref{ISDDE with control}), we define the delayed control $v_d(\cdot)$ via an integral with respect to a $\sigma$-finite measure:
		\begin{equation}\nonumber
			v_d(t) := \int_{0}^{+\infty}\phi(t-\theta,t)v(t-\theta)\alpha(d\theta) , \quad t \in [0, T],
		\end{equation}
where the scalar kernel $\phi(\cdot, \cdot)$ is bounded and $\alpha$ is a $\sigma$-finite measure on $[0,+\infty)$. Denote by $\left|\alpha\right|$ the total variation of measure $\alpha$ and assume:
		\begin{assumption}\label{ass: delay measure}
		     There exists constants $C_{\phi}>0$ and $C_{\alpha}>0$ such that 
		\[
        |\phi(\cdot,\cdot)| \leqslant C_{\phi}, 
		\quad\int_{0}^{+\infty}\left|\alpha\right|(d\theta) \leqslant C_{\alpha}.
        \]
		\end{assumption}
        
		Define the set of admissible controls as 
		\[
		\mathcal{U}_{a d}=\left\{v(\cdot):(-\infty, T] \times\Omega\rightarrow U \mid v(\cdot)\in \mathcal{M}_{\mathcal{F}}^2\left(0, T ; U\right)~ \text{and}~ v(t)=\varphi(t) \text { for } t \leqslant 0\right\}.
		\]
		
The objective is to minimize the following performance functional over $\mathcal{U}_{a d}$
	\begin{equation}\label{cost functional}
	\begin{aligned}
		J(v(\cdot))=\mathbb{E} \left[\int_{0}^{T} l\left(t, X_t^{v}, v_d(t)\right) d t\right]+ \mathbb{E}\big[h(X^v(T))\big],
	\end{aligned}
\end{equation}
where  $l:[0,T]\times \Omega \times C_\lambda((-\infty,T];H)\times \mathcal{U} \rightarrow \mathbb{R}$ and $h:\Omega \times H \rightarrow \mathbb{R}$ are given measurable functions such that $l(\cdot, \cdot, x, v)$ is progressively measurable for each $(x,v)\in C_\lambda((-\infty,T];H)\times \mathcal{U}$ and $h(\cdot, x_T)$ is $\mathcal{F}_T$ measurable for each $x_T\in H$. 

We introduce the following assumptions:
		 \begin{assumption}\label{ass: Integrability} (Integrability) 
The processes \(\{b(t,0,0)\}_{t\in[0,T]}\) and \(\{\sigma(t,0,0)\}_{t\in[0,T]}\) satisfy
\[
\left\{b\left(t, 0,0\right)\right\}_{t \in[0, T]} \in \mathcal{M}_{\mathcal{F}}^2(0, T ; H),\quad \left\{\sigma\left(t, 0,0\right)\right\}_{t \in[0, T]} \in \mathcal{M}_{\mathcal{F}}^2\left(0, T ; \mathcal{L}_2(\mathbb{R}^m,H) \right)
\]
Moreover, for all $v(\cdot)\in \mathcal{U}_{a d}$,
\[
\mathbb{E}\left[\int_0^T |\,l(t,X_t^v,v_d(t))\,|\,dt\right]<\infty .
\] 
\end{assumption}

	\begin{assumption}\label{ass: differentiability} (Differentiability)	In this assumption, we write $\mathcal C_\lambda:=C_\lambda((-\infty,T];H)$ for brevity.
	    \begin{enumerate}
\item[(i)] For almost all $(t, \omega)\in [0,T]\times \Omega$, $b$ and $\sigma$ are continuously Fr\'echet differentiable with respect to $(x, v)\in \mathcal C_\lambda \times \mathcal{U}$. Their  Fr\'echet derivatives satisfy
\begin{align*}
    b_x(t,\omega,x,v)&\in \mathcal L(\mathcal C_\lambda,H),
&&\sigma_x(t,\omega,x,v)\in \mathcal L(\mathcal C_\lambda, \mathcal L_2(\mathbb{R}^m,H)),\\
b_v(t,\omega,x,v)&\in \mathcal L(\mathcal U,H),
&&\sigma_v(t,\omega,x,v)\in \mathcal L(\mathcal U,\mathcal L_2(\mathbb{R}^m,H)).
\end{align*}
Moreover, there exists a constant \(N_1>0\) such that, for almost all \((t,\omega)\) and all \((x,v)\in\mathcal C_\lambda\times \mathcal U\),
\begin{align*}
    &\|b_x(t,\omega,x,v)\|_{\mathcal L(\mathcal C_\lambda,H)}
+\|\sigma_x(t,\omega,x,v)\|_{\mathcal L(\mathcal C_\lambda,\mathcal L_2(\mathbb{R}^m,H))}\\
&+\|b_v(t,\omega,x,v)\|_{\mathcal L(\mathcal U,H)}
+\|\sigma_v(t,\omega,x,v)\|_{\mathcal L(\mathcal U,\mathcal L_2(\mathbb{R}^m,H))}
\leqslant N_1.
\end{align*}

\item[(ii)] For almost all $(t, \omega)\in [0,T]\times \Omega$, $l$ is  continuously Fr\'echet differentiable with respect to $(x, v)\in C_{\lambda}\times \mathcal{U}$ and $h$ is continuously Fr\'echet differentiable. 
Under the identification \(U^*\simeq U\), their Fr\'echet derivatives satisfy
\[
l_x(t,\omega,x,v)\in \mathcal L(C_\lambda, \mathbb{R}) ,\qquad
l_v(t,\omega,x,v)\in U,\qquad
h_{x_T}(x_T)\in H.
\]
Moreover, there exists a constant \(N_2>0\) such that, for almost all \((t,\omega)\) and all \((x,v,x_T)\in \mathcal C_\lambda\times \mathcal U\times H\),
\[
|l(t,\omega,x,v)|\leqslant N_2\big(1+\|x\|_{\lambda,T}^2+\|v\|_U^2\big),
\]
\[
\|l_x(t,\omega,x,v)\|_{\mathcal L(C_\lambda, \mathbb{R})}+\|l_v(t,\omega,x,v)\|_U
\leqslant N_2\big(1+\|x\|_{\lambda,T}+\|v\|_U\big),
\]
\[
|h(x_T)|+\|h_{x_T}(x_T)\|_H
\leqslant N_2\big(1+\|x_T\|_H\big).
\]

    \item[(iii)]
The functions $b, \sigma$ and $l$ are non-anticipatively differentiable with respect to $x\in \mathcal C_\lambda$ in the sense of Definition~\ref{def: non-anticipatively differentiable}, with path derivatives $\partial
 	_{x} b(t,x,v)$, $\partial _{x} \sigma (t,x,v)$ and $\partial
 	_{x} l(t,x,v)$.
Moreover, for each fixed \((x,v)\in \mathcal C_\lambda\times \mathcal{U}\), the corresponding kernels satisfy Assumption~\ref{ass:lag-kernel} and the measurability requirements of Proposition~\ref{prop:adapted-adjoint}, with common deterministic bounds \(M_0\) and \(M\), uniformly in \((\omega,x,v)\).
\end{enumerate}
	\end{assumption}

    \begin{remark}\label{connection of 2 derivatives}
For $\Phi=b,\sigma,l$, we use $\Phi_x(t,x,v)$ to denote the Fr\'echet derivative on the phase space $C_{\lambda}(( -\infty, T];H)$. Since the coefficients are non-anticipative, we use $\partial_x\Phi(t,x,v)$ to denote the corresponding non-anticipative path derivative, which acts only on the past-restricted subspace $C_{\lambda,t}(( -\infty, T];H) \subsetneq C_{\lambda}(( -\infty, T];H)$.

The Fr\'echet derivative considers all perturbations on $C_{\lambda}(( -\infty, T];H)$, while the non-anticipative derivative only concerns the perturbations supported in $(-\infty,t]$. In Section~\ref{section SMP}, we use $C_{\lambda}(( -\infty, T];H)$ as the ambient phase space and use $C_{\lambda,t}(( -\infty, T];H)$ whenever the non-anticipative structure of time $t$ must be made explicit.
\end{remark}

    \begin{remark}
        	By Theorem~\ref{thm:ISFDE-wellposed}, it follows from Assumption~\ref{ass: Integrability} and Assumption~\ref{ass: differentiability}  (i) that the controlled SEE (\ref{ISDDE with control}) admits a unique solution $X^v(\cdot)\in\mathcal{S}_\mathcal{F}^2(-\infty, T ; H) $ in the sense of Definition~\ref{definition X} for all $v(\cdot)\in \mathcal{U}_{a d}$.  Moreover, Assumption~\ref{ass: differentiability}  (ii) implies that $\left| J(v(\cdot))\right|< +\infty$.
    \end{remark}

	\subsection{Necessary Conditions.}\label{chapter necessary SMP}
	Let $u(\cdot)\in \mathcal{U}_{a d}$ be the optimal control of our problem and denote by $X^u(\cdot)$ the optimal trajectory. Taking an arbitrary $v(\cdot)\in \mathcal{U}_{a d}$, since $U$ is convex, we know that $u^{\varepsilon}(\cdot):= u(\cdot) + \varepsilon (v(\cdot)-u(\cdot)) \in \mathcal{U}_{a d}$ for $0 \leqslant \varepsilon \leqslant 1$. Let $X^{\varepsilon}(\cdot)$ be the corresponding trajectory of system (\ref{ISDDE with control}) with perturbed control $v^{\varepsilon}(\cdot)$.

For simplicity, we denote:
\begin{align*}
    \hat{v}(\cdot):= v(\cdot)-u(\cdot),\quad
    \hat{v}_d(\cdot):= v_d(\cdot)-u_d(\cdot),
\end{align*}
and introduce the following abbreviations for the functions $\Phi =  b,\sigma,l$ and their derivatives evaluated at the optimal pair $\left(u(\cdot), X^u(\cdot)\right)$:
\begin{align*}
	\Phi^u(t)&:=\Phi (t, X^{u}_{t},u_{d}(t)), \\
		\partial ^u_{x}\Phi (t)&:=\partial _{x}\Phi (t,X^u_{t},u_{d}(t)), \\
        \Phi^u_v(t)&:= \Phi_v (t,X^u_{t},u_{d}(t)).
\end{align*}  

To derive the first-order necessary condition in terms of small $\varepsilon$, we consider the following linear infinitely delayed SEE, which we call the "variational equation":
\begin{equation}\label{eq: variational equation}
    \left\{
			\begin{aligned}
				d\hat X(t) &= \Big[\partial^u_x b(t) (\hat X_t) + b^u_v(t)\hat{v}_d(t)\Big]dt + \Big[\partial^u_x \sigma(t) (\hat X_t) + \sigma^u_v(t)\hat{v}_d(t)\Big] dW(t), \quad t\in[0,T],\\
				\hat X(t) &= 0, \qquad t\in (-\infty, 0].
			\end{aligned}
			\right.
\end{equation}

\begin{remark}
    For the well-posedness of the equation (\ref{eq: variational equation}), it suffices to show that Assumptions~\ref{ass:ISFDE-Lip}-\ref{ass:ISFDE-int} hold for the variational equation (\ref{eq: variational equation}). For each fixed perturbation $\hat v(\cdot)$, define
\[
\bar b(t,x_t):=\partial_x^u b(t)(x_t)+b_v^u(t)\hat v_d(t),
\qquad
\bar\sigma(t,x_t):=\partial_x^u \sigma(t)(x_t)+\sigma_v^u(t)\hat v_d(t).
\]
By Assumption~\ref{ass: differentiability} (i) and the kernel bounds in Assumption~\ref{ass: differentiability} (iii), there exists a constant $C>0$ such that, for all $x_t,x_t'\in C_{\lambda,t}(( -\infty, T];H)$,
\[
\|\bar b(t,x_t)-\bar b(t,x_t')\|_H +\|\bar\sigma(t,x_t)-\bar\sigma(t,x_t')\|_{\mathcal L_2}
\leqslant C\|x_t-x_t'\|_{\lambda,T}.
\]
Moreover, $\bar b(\cdot,0)\in \mathcal{M}_{\mathcal F}^2(0,T;H)$ and $\bar\sigma(\cdot,0)\in \mathcal M_{\mathcal F}^2(0,T;\mathcal{L}_2(\mathbb R^m,H))$. Hence Assumptions~\ref{ass:ISFDE-Lip}-\ref{ass:ISFDE-int} hold for the variational equation (\ref{eq: variational equation}), and Theorem~\ref{thm:ISFDE-wellposed} implies that (\ref{eq: variational equation}) admits a unique solution.
\end{remark}

For each $t\in [0,T]$, we set
\[
\tilde{X}^{\varepsilon}(t):= \frac{X^{\varepsilon}(t) - X^u(t)}{\varepsilon} - \hat{X}(t).
\]

Then we have the following convergence result.
	\begin{lemma}\label{variational estimate}
			Suppose Assumption~\ref{ass: Integrability}-\ref{ass: differentiability} hold, then we have 
			\[
            \lim_{\varepsilon \rightarrow 0}\mathbb{E}\left[\sup_{0 \leqslant t \leqslant T}\left\|\tilde{X}^{\varepsilon}(t)\right\|_H^2\right]=0.
            \]
		\end{lemma}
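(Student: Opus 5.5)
We argue by a Taylor expansion of the coefficients along the segment between the optimal and perturbed state–control pairs, followed by the same Itô/BDG/Gr\"onwall scheme used in the proofs of Theorems~\ref{thm:stability} and~\ref{thm:ISFDE-wellposed}.

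\emph{Preliminary bounds.} Since $u^\varepsilon_d=u_d+\varepsilon\hat v_d$ by linearity of $v\mapsto v_d$, Theorem~\ref{thm:stability} (with $\gamma^{(1)}=\gamma^{(2)}$) and the Lipschitz bounds in Assumption~\ref{ass: differentiability}(i) give
\[
\mathbb E\Big[\sup_{0\leqslant t\leqslant T}\|X^\varepsilon(t)-X^u(t)\|_H^2\Big]\leqslant C\varepsilon^2\,\mathbb E\Big[\int_0^T\|\hat v_d(t)\|_U^2dt\Big]\leqslant C\varepsilon^2,
\]
where the last step uses Corollary~\ref{cor:section3-specialization} and Theorem~\ref{thm:R-bounded}. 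Since the two processes agree on $(-\infty,0]$, the segment bound from the proof of Theorem~\ref{thm:stability} gives $\|X^\varepsilon_t-X^u_t\|_{\lambda,T}\leqslant\sup_{r\leqslant t}\|X^\varepsilon(r)-X^u(r)\|_H$. Hence $X^\varepsilon_t\to X^u_t$ in $C_\lambda$, in probability and uniformly in $t$.

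\emph{Equation for $\tilde X^\varepsilon$.} By the mean value theorem in integral form,
\[
\frac{b(t,X^\varepsilon_t,u^\varepsilon_d(t))-b^u(t)}{\varepsilon}=\int_0^1\Big[b_x^{\mu}(t)\Big(\frac{X^\varepsilon_t-X^u_t}{\varepsilon}\Big)+b_v^{\mu}(t)\hat v_d(t)\Big]d\mu ,
\]
where the superscript $\mu$ means evaluation at $(X^u_t+\mu(X^\varepsilon_t-X^u_t),\,u_d+\mu\varepsilon\hat v_d)$. The increment $X^\varepsilon_t-X^u_t$ vanishes on $(-\infty,0]$ and, being a segment of a path up to time $t$, lies in $C_{\lambda,t}$. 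By Remark~\ref{connection of 2 derivatives} we may therefore replace $b_x$ by $\partial_x b$. Substituting $(X^\varepsilon_t-X^u_t)/\varepsilon=\tilde X^\varepsilon_t+\hat X_t$ shows that $\tilde X^\varepsilon$ solves a linear equation with zero initial datum:
\[
d\tilde X^\varepsilon(t)=\Big[\textstyle\int_0^1\partial_xb^{\mu}(t)\,d\mu\,(\tilde X^\varepsilon_t)+A^\varepsilon(t)\Big]dt+\Big[\textstyle\int_0^1\partial_x\sigma^{\mu}(t)\,d\mu\,(\tilde X^\varepsilon_t)+B^\varepsilon(t)\Big]dW(t).
\]
The remainder terms are
\[
A^\varepsilon(t)=\int_0^1\big(\partial_xb^{\mu}(t)-\partial_x^ub(t)\big)\hat X_t\,d\mu+\int_0^1\big(b_v^{\mu}(t)-b_v^u(t)\big)\hat v_d(t)\,d\mu ,
\]
and $B^\varepsilon$ is defined analogously with $\sigma$ in place of $b$.

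\emph{Estimate.} The operator coefficient is bounded by $N_1$. Applying It\^o's formula \eqref{ito on scalar function} to $\|\tilde X^\varepsilon(t)\|_H^2$, together with BDG and the segment bound above, yields
\[
\mathbb E\sup_{s\leqslant t}\|\tilde X^\varepsilon(s)\|_H^2\leqslant C\int_0^t\mathbb E\sup_{r\leqslant s}\|\tilde X^\varepsilon(r)\|_H^2\,ds+C\,\mathbb E\int_0^T\big(\|A^\varepsilon\|_H^2+\|B^\varepsilon\|_{\mathcal L_2}^2\big)dt .
\]
Gr\"onwall's inequality then reduces the claim to showing
\[
\mathbb E\int_0^T\big(\|A^\varepsilon(t)\|_H^2+\|B^\varepsilon(t)\|_{\mathcal L_2}^2\big)dt\longrightarrow0 .
\]

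\emph{Main obstacle: the remainder terms.} Pointwise in $(t,\omega,\mu)$, the continuity of the Fr\'echet derivatives in Assumption~\ref{ass: differentiability}(i), combined with the convergence in probability of the arguments, gives $\|\partial_xb^\mu(t)-\partial^u_xb(t)\|_{\mathcal L}\to0$ and $\|b^\mu_v(t)-b^u_v(t)\|_{\mathcal L}\to0$ in probability. The integrands are dominated by
\[
2N_1\big(\|\hat X_t\|_{\lambda,T}+\|\hat v_d(t)\|_U\big),
\]
which is square integrable because $\mathbb E\sup_t\|\hat X(t)\|_H^2<\infty$ by Corollary~\ref{cor:apriori} and $\hat v_d\in\mathcal M^2_{\mathcal F}$. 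Dominated convergence, in its convergence-in-probability form applied along subsequences, then gives the desired limit. The delicate point is that the convergence is only in probability and the operator norms are taken on the infinite-window space $C_\lambda$. This is why the uniform bound $N_1$ and the domination via $\|\hat X_t\|_{\lambda,T}\leqslant\sup_{r\leqslant t}\|\hat X(r)\|_H$, which holds because $\hat X$ vanishes for $t\leqslant0$, are essential.
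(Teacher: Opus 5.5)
Your proposal is correct and follows essentially the same route as the paper: a first-order Taylor expansion turns the equation for $\tilde X^{\varepsilon}$ into a linear equation with an operator coefficient bounded by $N_1$ plus a remainder forcing term, an a priori It\^o/BDG/Gr\"onwall estimate (the paper simply cites Corollary~\ref{cor:apriori}) bounds $\tilde X^{\varepsilon}$ by that remainder, and dominated convergence via continuity of the derivatives finishes. Your joint mean-value expansion along $\bigl(X^u_t+\mu(X^\varepsilon_t-X^u_t),\,u_d+\mu\varepsilon\hat v_d\bigr)$ and your preliminary bound $\mathbb E\bigl[\sup_{t}\|X^\varepsilon(t)-X^u(t)\|_H^2\bigr]\leqslant C\varepsilon^2$ spell out two points the paper glosses over: the paper expands in $x$ at $u_d$ and in $v$ at $X^u_t$ separately, which is exact only up to a cross term, and it never says why the intermediate arguments converge.
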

		\begin{proof} By the definition of $\tilde{X}^{\varepsilon}(t)$, we know that it satisfies
			\[
            \left\{\begin{aligned}
				d\tilde{X}^{\varepsilon}(t)=&\bigg[\frac{b(t, X^{\varepsilon}(t), v_d^{\varepsilon}(t))-b^u(t)}{\varepsilon}-\partial^u_xb(t)(\hat{X}_t)-b^u_v(t)\hat{v}_{d}(t)\bigg]dt\\
				& + \bigg[\frac{\sigma(t, X^{\varepsilon}(t), v_d^{\varepsilon}(t))-\sigma^u(t)}{\varepsilon}-\partial^u_x \sigma(t)(\hat{X}_t)-\sigma^u_v(t)\hat{v}_{d}(t)\bigg] d W(t), \quad  t \in[0, T]; \\
				\tilde{X}^{\varepsilon}(t) = &0, \quad t\in(-\infty, 0].
			\end{aligned}\right.
            \]
            It follows from the the first-order Taylor’s expansion of $b, \sigma$ that the above dynamic of $\tilde{X}^{\varepsilon}$ can be written as
			\[
            \left\{\begin{aligned}
				d\tilde{X}^{\varepsilon}(t)=&\Big\{\partial_x^{\varepsilon} b(t)(\tilde{X}^{\varepsilon}_t) + \big[\partial_x^{\varepsilon} b(t)-\partial^u_xb(t)\big](\hat{X}_t)+\big[ b^{\varepsilon}_v(t)-b^u_v(t)\big]\hat{v}_{d}(t)\Big\}dt\\
				& + \Big\{\partial_x^{\varepsilon} \sigma(t)(\tilde{X}^{\varepsilon}_t) + \big[\partial_x^{\varepsilon} \sigma(t)-\partial^u_x\sigma(t)\big](\hat{X}_t)+\big[ \sigma^{\varepsilon}_v(t)-\sigma^u_v(t)\big]\hat{v}_{d}(t)\Big\} d W(t), \quad  t \in[0, T]; \\
				\tilde{X}^{\varepsilon}(t) = &0, \quad t\in(-\infty, 0],
			\end{aligned}\right. 
            \]
            where we denote for $\Phi = b, \sigma$,
            \begin{align*}
                \partial_x^{\varepsilon} \Phi(t)&:=  \int_{0}^{1}\partial_x\Phi\left(t,X^u_t+\lambda(X^{\varepsilon}_t-X^u_t),u_d(t)\right)d\lambda,   \\ 
                \Phi_v^{\varepsilon}&:=\int_{0}^{1}\Phi_v\left(t,X^u_t,u_d(t) +\lambda(u_d^{\varepsilon}(t)-u_d(t))\right)d\lambda.
            \end{align*}
        Applying the \textit{a priori} estimate (\ref{a priori est SEE}) yields 
        \begin{align*}
            \mathbb{E}\left[\sup_{0 \leqslant t \leqslant T}\left\|\tilde{X}^{\varepsilon}(t)\right\|_H^2\right]\leqslant
            C\bigg\{
	&\mathbb{E}\Big[\int_0^T \left\|\big[\partial_x^{\varepsilon} b(t)-\partial^u_xb(t)\big](\hat{X}_t)+\big[ b^{\varepsilon}_v(t)-b^u_v(t)\big]\hat{v}_{d}(t)\right\|_H^2dt\Big]\\
	&+\mathbb{E}\Big[\int_0^T \left\|\big[\partial_x^{\varepsilon} \sigma(t)-\partial^u_x\sigma(t)\big](\hat{X}_t)+\big[ \sigma^{\varepsilon}_v(t)-\sigma^u_v(t)\big]\hat{v}_{d}(t)\right\|_{\mathcal{L}_2}^2\,dt\Big]
	\bigg\} \rightarrow0,
        \end{align*}
       as $\varepsilon \rightarrow0$, where the convergence holds by the continuous differentiability of $b, \sigma$ and the dominated convergence theorem.
		\end{proof}
\\

    Since $u(\cdot)$ is the optimal control of the delayed system (\ref{ISDDE with control}), i.e. $\varepsilon^{-1}\left[J\left(u^{\varepsilon}(\cdot)\right)-J\left(u(\cdot)\right)\right] \geqslant 0 $, the  following variational inequality follows directly by using 
	Lemma \ref{variational estimate} and Taylor's expansion of $l, \gamma$.

    \begin{lemma}
Suppose that Assumptions~\ref{ass: Integrability}-\ref{ass: differentiability} hold. Then the variational inequality holds:
\begin{equation}\label{eq variational inequality}
    \mathbb E\left[\int_0^T\Big(\partial_x^u l(t)(\hat X_t)+\langle l_v^u(t),\hat v_d(t)\rangle_U\Big)\,dt
+\langle h_{x_T}(X^u(T)),\hat X(T)\rangle_H\right]\geqslant 0.
\end{equation}
\end{lemma}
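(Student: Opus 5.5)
We start from the optimality of $u(\cdot)$, which gives $0\leqslant \varepsilon^{-1}[J(u^\varepsilon(\cdot))-J(u(\cdot))]$ for every $\varepsilon\in(0,1]$. We then identify the limit of this difference quotient as $\varepsilon\to0$. Because $v\mapsto v_d$ is linear, $u^\varepsilon_d(t)-u_d(t)=\varepsilon\hat v_d(t)$ exactly. For the running cost, the mean value theorem in integral form gives
\begin{align*}
\frac{l(t,X^\varepsilon_t,u^\varepsilon_d(t))-l^u(t)}{\varepsilon}
=\int_0^1\Big[ l_x(t,\Theta^{\varepsilon,\mu}_t)\Big(\frac{X^\varepsilon_t-X^u_t}{\varepsilon}\Big)+\big\langle l_v(t,\Theta^{\varepsilon,\mu}_t),\hat v_d(t)\big\rangle_U\Big]d\mu .
\end{align*}
Here $\Theta^{\varepsilon,\mu}_t:=(X^u_t+\mu(X^\varepsilon_t-X^u_t),\,u_d(t)+\mu\varepsilon\hat v_d(t))$. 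The terminal cost is treated the same way, with $h_{x_T}$ evaluated along the segment between $X^u(T)$ and $X^\varepsilon(T)$.

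The increments $X^\varepsilon_t-X^u_t$ vanish on $(-\infty,0]$ and, after the non-anticipative extension, are supported in the past of $t$. Remark~\ref{connection of 2 derivatives} therefore allows us to replace $l_x$ by the non-anticipative derivative $\partial_x l$ on these directions. We then decompose $\varepsilon^{-1}(X^\varepsilon_t-X^u_t)=\hat X_t+\tilde X^\varepsilon_t$ and split the quotient into three pieces: the main term $\partial_x^u l(t)(\hat X_t)+\langle l^u_v(t),\hat v_d(t)\rangle_U$, a remainder in which $\tilde X^\varepsilon_t$ appears, and a remainder carrying the difference of derivatives $\partial_x l(t,\Theta^{\varepsilon,\mu}_t)-\partial^u_x l(t)$ (and likewise for $l_v$). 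Since $\tilde X^\varepsilon$ and $\hat X$ vanish for $t\leqslant0$, we have $\|\tilde X^\varepsilon_t\|_{\lambda,T}\leqslant C\sup_{0\leqslant s\leqslant t}\|\tilde X^\varepsilon(s)\|_H$, and the same bound holds for $\hat X_t$.

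We then estimate the two remainders. By the growth bound in Assumption~\ref{ass: differentiability}(ii) and the Cauchy--Schwarz inequality, the first remainder is at most
\[
C\,\mathbb E\Big[\int_0^T\big(1+\|X^u_t\|^2_{\lambda,T}+\|X^\varepsilon_t\|^2_{\lambda,T}+\|u_d(t)\|^2_U+\|\hat v_d(t)\|_U^2\big)dt\Big]^{1/2}\,\mathbb E\Big[\sup_{0\leqslant t\leqslant T}\|\tilde X^\varepsilon(t)\|^2_H\Big]^{1/2}.
\]
The first factor is bounded uniformly in $\varepsilon$ by Corollary~\ref{cor:apriori} together with Theorem~\ref{thm:R-bounded}, since $\|u_d\|,\|\hat v_d\|$ are controlled by the $\mathcal M^2$ norms of the controls. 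The second factor tends to $0$ by Lemma~\ref{variational estimate}.

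For the second remainder we use Theorem~\ref{thm:stability}, which gives $X^\varepsilon\to X^u$ in $\mathcal S^2_{\mathcal F}$ (its right-hand side is $O(\varepsilon^2)$ by the Lipschitz bound $N_1$). Hence $\Theta^{\varepsilon,\mu}_t\to(X^u_t,u_d(t))$ in probability, in $\mathcal C_\lambda\times\mathcal U$, along subsequences. Continuity of the Fréchet derivatives then gives pointwise convergence of the integrand to $0$. This remainder is dominated by $C(1+\|X^u_t\|_{\lambda,T}+\|X^\varepsilon_t\|_{\lambda,T}+\|u_d\|_U+\|\hat v_d\|_U)(\|\hat X_t\|_{\lambda,T}+\|\hat v_d(t)\|_U)$, which is uniformly integrable, so Vitali's (generalized dominated) convergence theorem applies. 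The terminal term is handled identically, using the linear growth of $h_{x_T}$ and $\mathbb E[\|X^\varepsilon(T)-X^u(T)\|^2_H]=O(\varepsilon^2)$.

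Passing to the limit in the nonnegative difference quotient then yields \eqref{eq variational inequality}. The main obstacle is the second remainder. There the convergence holds only in probability and along subsequences, and the dominating function itself depends on $\varepsilon$. We would handle this with the subsequence principle: every sequence $\varepsilon_n\to0$ has a further subsequence along which the limit is $0$, so the full limit is $0$. Uniform integrability follows from uniform $L^2$ bounds on the product terms via Cauchy--Schwarz.
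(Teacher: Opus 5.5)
Your proposal is correct and follows the same route as the paper, which only states that the inequality follows from $\varepsilon^{-1}[J(u^\varepsilon(\cdot))-J(u(\cdot))]\geqslant 0$, Lemma~\ref{variational estimate} and a first-order Taylor expansion of $l$ and $h$. Your write-up supplies the details the paper omits. You split off the $\tilde X^\varepsilon$-remainder, which Cauchy--Schwarz and Lemma~\ref{variational estimate} dispose of, and the derivative-difference remainder, which continuity of the derivatives plus Vitali disposes of; uniform integrability there holds because $\|\hat X_t\|_{\lambda,T}+\|\hat v_d(t)\|_U$ is a fixed $L^2$ function and the other factor is uniformly $L^2$-bounded.
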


To derive the maximum principle, we introduce the adjoint equation associated with the variational equation~(\ref{eq: variational equation}). 
\begin{equation}\label{adjoint equation}
\left\{
\begin{aligned}
-dp(t)&=E_t\Big[\partial_x^{\ast,u}b(t)(p_{t+})+\partial_x^{\ast,u}\sigma(t)(q_{t+})+\partial_x^{\ast,u}l(t)(1)\Big]dt-q(t)dW(t),\quad t\in[0,T],\\
p(T)&=h_{x_T}(X^u(T)),\qquad
p(t)=0, \quad t\in(T,\infty),   \qquad q(t)=0,\quad t\in[T,\infty).
\end{aligned}
\right.
\end{equation}

Here, for $\Phi=b,\sigma,l$, $\partial_x^{\ast,u}\Phi(t)$ denotes the path-wise adjoint operator of $\partial_x^u\Phi(t)$, and let $E_t[\partial_x^{\ast,u}\Phi(t)(\cdot)]$ denote its adapted version. In addition, $\partial_x^{\ast,u}\Phi(t)(1)$ means that the operator $\partial_x^{\ast,u}\Phi(t)$ acts on the constant scalar process $1$.

\begin{remark}
    The adjoint equation (\ref{adjoint equation}) forms a special case of the IABSEE (\ref{ABSDE without control}), since the adapted generator in  (\ref{adjoint equation}) depends on the whole future value of $(p_{t+}, q_{t+})$. 

    By Proposition~\ref{prop:adapted-adjoint}, the adapted adjoint operators are bounded operators on $\mathcal{M}^2_{\mathcal{F}}(0,T; \cdot)$ with estimate (\ref{eq:adapted-bound}). Hence,  Assumption~\ref{ass:IABSDE-Lip} holds for equation (\ref{adjoint equation}), and by Theorem~\ref{thm:IABSDE-wellposed}, equation (\ref{adjoint equation}) has a unique solution $(p(\cdot), q(\cdot))\in\mathcal{S}_\mathcal{F}^2\left(0, +\infty; H\right) \times \mathcal{M}_\mathcal{F}^{2,\beta}\left(0, +\infty; \mathcal{L}_2(\mathbb{R}^m,H)\right)$.

\end{remark}

		Define the Hamiltonian function $H: [0,	T] \times \Omega\times  C_\lambda((-\infty,T];H)\times \mathcal{U} \times H \times \mathcal{L}_2(\mathbb{R}^m,H)\rightarrow  \mathbb{R}$: 
		\[
        		H\left(t, x, v, p, q\right):= \left\langle b(t, x, v), p \right\rangle_H  + \left\langle \sigma(t, x, v), q \right\rangle_2 +l\left(t, x, v\right),
        \]
with associated shorthand:
\begin{align*}
	H^u(t)&:=H (t, X^{u}_{t},u_{d}(t), p(t), q(t)), \\
		\partial ^u_{x}H (t)&:=\partial _{x}H (t,X^u_{t},u_{d}(t),p(t), q(t)), \\
        H^u_v(t)&:= H_v (t,X^u_{t},u_{d}(t),p(t), q(t)),
\end{align*}  
where $(p(\cdot), q(\cdot))$ solve the equation (\ref{adjoint equation}).
        \\
        
		Now we can give the necessary condition for the maximum principle.
		\begin{theorem}\label{necessary SMP}
			(Necessary conditions of optimality) Suppose Assumptions~\ref{ass: Integrability}-\ref{ass: differentiability} hold. Let $u(\cdot)$ be an optimal control  of the stochastic control system with delay (\ref{ISDDE with control})-(\ref{cost functional}),  $X^u(\cdot)$ be the corresponding optimal state trajectory and $(p(\cdot), q(\cdot))$ be the solution of the adjoint equation (\ref{adjoint equation}). Then
			\begin{equation}\label{condition neccesary SMP}
				\begin{aligned}
					\left\langle E_t\left[ \int_{0}^{+\infty}\phi(t,t+\theta)H^u_{v}(t+\theta)\alpha(d\theta)\right],v-u(t)\right\rangle_U  \geqslant 0,
				\end{aligned}
			\end{equation}
			a.e., a.s. for all $v\in U$.
		\end{theorem}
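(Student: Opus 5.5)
The plan is the standard duality argument: apply the product Itô formula \eqref{ito on 2 product} to $\langle p(t),\hat X(t)\rangle_H$ on $[0,T]$, and use the adjoint machinery of Section~\ref{chapter Non-anticipative path derivatives and infinite-window dual operators in Hilbert spaces} to remove the state perturbation $\hat X$ from the variational inequality \eqref{eq variational inequality}. This leaves an inequality involving only $\hat v$. Localization in $(t,\omega)$ then gives the pointwise condition \eqref{condition neccesary SMP}.

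\textbf{Step 1: duality relation.} Since $\hat X(0)=0$ and $p(T)=h_{x_T}(X^u(T))$, taking expectations (the stochastic integrals are true martingales because $p\in\mathcal S^2_{\mathcal F}$, $\hat X\in\mathcal S^2_{\mathcal F}$ and the coefficients lie in $\mathcal M^2_{\mathcal F}$) gives
\begin{align*}
\mathbb E\langle h_{x_T}(X^u(T)),\hat X(T)\rangle_H
={}&\mathbb E\int_0^T\Big(\langle p(t),\partial^u_xb(t)(\hat X_t)\rangle_H+\langle q(t),\partial^u_x\sigma(t)(\hat X_t)\rangle_2\Big)dt\\
&-\mathbb E\int_0^T\Big\langle \hat X(t),E_t\big[\partial_x^{\ast,u}b(t)(p_{t+})+\partial_x^{\ast,u}\sigma(t)(q_{t+})+\partial_x^{\ast,u}l(t)(1)\big]\Big\rangle_Hdt\\
&+\mathbb E\int_0^T\Big(\langle p(t),b^u_v(t)\hat v_d(t)\rangle_H+\langle q(t),\sigma^u_v(t)\hat v_d(t)\rangle_2\Big)dt .
\end{align*}

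\textbf{Step 2: cancel the $\hat X$-terms.} Apply Proposition~\ref{prop:adapted-adjoint} three times, with $Z=\hat X$ (which vanishes on $(-\infty,0]$). The three choices are: $E=F=H$ with the kernel of $\partial^u_x b$ and $Q=p$; $F=\mathcal L_2(\mathbb R^m,H)$ with $Q=q$; and $F=\mathbb R$ with $Q\equiv1$. The kernel requirements hold by Assumption~\ref{ass: differentiability}(iii). The duality identity \eqref{eq:adapted-duality} makes the first and second lines of the display cancel, except for the $l$-term, which gives $\mathbb E\int_0^T\partial_x^u l(t)(\hat X_t)dt$.

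Two technical points need checking here. First, $\hat X_t$ is only continuous, so I identify $\partial_x^u\Phi(t)(\hat X_t)$ with $(\mathcal R\hat X)(t)$ through the kernel representation of Assumption~\ref{ass:lag-kernel}(ii). Second, the operator $\partial_x^{\ast,u}\Phi(t)(p_{t+})$ in \eqref{adjoint equation} must be exactly $\mathcal R^*p$ evaluated at $t$, so that $E_t[\cdot]$ coincides with $\mathcal A^*p$. Substituting into \eqref{eq variational inequality} then gives
\[
\mathbb E\int_0^T\langle H^u_v(t),\hat v_d(t)\rangle_U\,dt\geqslant0,
\]
where $H^u_v=b_v^{u,*}p+\sigma_v^{u,*}q+l^u_v$.

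\textbf{Step 3: move the delay from $\hat v_d$ to $H^u_v$.} The perturbation satisfies $\hat v_d=\mathcal R\hat v$, with $\hat v=0$ on $(-\infty,0]$ because both controls equal $\varphi$ there. Corollary~\ref{cor:section3-specialization} and \eqref{eq:adapted-duality} with $Q=H^u_v\in\mathcal M^2_{\mathcal F}(0,T;U)$ give
\[
\mathbb E\int_0^T\Big\langle E_t\Big[\int_0^{\infty}\phi(t,t+\theta)H^u_v(t+\theta)\alpha(d\theta)\Big],\,v(t)-u(t)\Big\rangle_U\,dt\geqslant0
\]
for every admissible $v(\cdot)$. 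Here $H^u_v$ is taken to vanish beyond $T$, which is consistent with $p=q=0$ there; any $l_v$ contribution beyond $T$ is also absent.

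\textbf{Step 4: localization.} Fix $v\in U$ and a progressively measurable set $A\subset[0,T]\times\Omega$. Choose the control equal to $v$ on $A$ and $u$ elsewhere. This control is admissible because $U$ is convex and $u\in\mathcal M^2_{\mathcal F}$. Plugging it into Step 3 gives the inequality on $A$; since $A$ is arbitrary, the integrand is nonnegative a.e., a.s. Using a countable dense subset of $U$ (which is separable) and continuity in $v$ then yields \eqref{condition neccesary SMP} for all $v\in U$ simultaneously.

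\textbf{Main obstacle.} I expect Step 2 to be the hardest part. It requires rigorously identifying the path-wise adjoint operator in \eqref{adjoint equation} with $\mathcal R^*$ when the kernel depends on $(\omega,X^u,u_d)$. It also requires the $\mathcal L_2$-valued case for $\sigma$, which I will handle by taking $F=\mathcal L_2(\mathbb R^m,H)$, and verifying measurability so that the optional projection matches $E_t$ a.e.
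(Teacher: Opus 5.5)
Your proposal is correct and follows essentially the same route as the paper. Both arguments apply the product It\^o rule to $\langle p(t),\hat X(t)\rangle_H$, eliminate $\hat X$ via the adapted duality of Proposition~\ref{prop:adapted-adjoint} (with $Q\equiv 1$ for the $l$-term), and shift the delay onto $H^u_v$ via Corollary~\ref{cor:section3-specialization}. The only differences are cosmetic: you apply the corollary to $H^u_v$ at once rather than to $Q_b,Q_\sigma,Q_l$ separately, and you write out the localization (modifying the control on progressively measurable sets, then a countable dense subset of $U$) where the paper cites Cadenillas--Karatzas.
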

		
		\begin{proof}
			Applying It\^o's formula (\ref{ito on 2 product}) to $\left\langle p(t), \hat{X}(t)\right\rangle_H$ on $[0,T]$ and taking expectation yields that
            \begin{align}
                  \mathbb{E}&\left[\left\langle h_{x_{T}}(X^u(T)),\hat{X}(T) 
                \right\rangle_H \right] \label{last term in SMP} \\
                &= \mathbb{E}\left[\int_0^T \left\langle \partial_x^ub(t)(\hat{X}_t) + b_v^u(t) \hat{v}_d(t), p(t) 
                \right\rangle_H dt\right] \nonumber\\
                &\quad - \mathbb{E}\left[\int_0^T \left\langle E_t\Big[\partial_x^{*,u}b(t)(p_{t+}) + \partial_x^{*,u}\sigma(t)(q_{t+}) + \partial_x^{*,u}l(t)(1)\Big], \hat{X}(t)
                \right\rangle_H dt\right] \nonumber\\
                &\quad + \mathbb{E}\left[\int_0^T \left\langle \partial_x^u\sigma(t)(\hat{X}_t) + \sigma_v^u(t) \hat{v}_d(t), q(t)
                \right\rangle_2 dt\right] ,\nonumber
            \end{align}
Since $\hat{X}(t) = 0$ for $t\leqslant 0$, then by (\ref{eq:adapted-duality}), we can obtain the following duality relationships
\begin{align*}
\mathbb{E}\left[\int_0^T \left\langle \partial_x^ub(t)(\hat{X}_t), p(t) \right\rangle_H dt\right]& = \mathbb{E}\left[\int_0^T \left\langle E_t\Big[\partial_x^{*,u}b(t)(p_{t+})\Big], \hat{X}(t)
                \right\rangle_H dt\right],\\
       \mathbb{E}\left[\int_0^T \left\langle \partial_x^u\sigma(t)(\hat{X}_t), q(t) \right\rangle_H dt\right]& = \mathbb{E}\left[\int_0^T \left\langle E_t\Big[\partial_x^{*,u}\sigma(t)(q_{t+})\Big], \hat{X}(t)
                \right\rangle_H dt\right].
\end{align*}
Taking $Q \equiv 1$ in the scalar-valued case of Proposition~\ref{prop:adapted-adjoint} yields
\[
\mathbb{E}\left[\int_0^T \partial_x^ul(t)(\hat{X}_t) dt\right]= \mathbb{E}\left[\int_0^T \left\langle E_t\Big[\partial_x^{*,u}l(t)(1)\Big], \hat{X}(t)
                \right\rangle_H dt\right].
\]
\noindent
For $t \in [0, T]$, define
\[
Q_b(t) := (b_v^u(t))^* p(t), \qquad Q_\sigma(t) := (\sigma_v^u(t))^* q(t), \qquad Q_l(t) := l_v^u(t),
\]
and extend $Q_b, Q_\sigma, Q_l$ by $0$ on $(T, \infty)$. Then
\[
H_v^u(t) = Q_b(t) + Q_\sigma(t) + Q_l(t), \qquad t \geqslant 0.
\]
Since $\hat{v}(t) = 0$ for $t \leqslant 0$, applying Corollary~\ref{cor:section3-specialization} with $Q = Q_b, Q_\sigma, Q_l$ yields
\begin{align*}
    \mathbb{E}&\left[\int_0^T \left\langle b_v^u(t) \hat{v}_d(t), p(t) \right\rangle_H dt\right]
    = \mathbb{E}\bigg[\int_0^T \langle \hat v_d(t),Q_b(t)\rangle_U dt\bigg]\\
    &= \mathbb{E}\left[\int_0^T \left\langle E_t\Big[ \int_{0}^{+\infty}\phi(t,t+\theta)Q_b(t+\theta)\alpha(d\theta), \hat{v}(t) \right\rangle_U dt\right],\\
    \mathbb{E}&\left[\int_0^T \left\langle \sigma_v^u(t) \hat{v}_d(t), q(t) \right\rangle_2 dt\right]
    = \mathbb{E}\bigg[\int_0^T \langle \hat v_d(t),Q_{\sigma}(t)\rangle_U dt\bigg]\\
    &= \mathbb{E}\left[\int_0^T \left\langle E_t\Big[ \int_{0}^{+\infty}\phi(t,t+\theta)Q_{\sigma}(t+\theta)\alpha(d\theta), \hat{v}(t) \right\rangle_U dt\right],
\end{align*}
and
\begin{align*}
    \mathbb{E}&\left[\int_0^T \left\langle l_v^u(t) , \hat{v}_d(t) \right\rangle_U dt\right]
    = \mathbb{E}\bigg[\int_0^T \langle \hat v_d(t),Q_{l}(t)\rangle_U dt\bigg]\\
    &= \mathbb{E}\left[\int_0^T \left\langle E_t\Big[ \int_{0}^{+\infty}\phi(t,t+\theta)Q_{l}(t+\theta)\alpha(d\theta), \hat{v}(t) \right\rangle_U dt\right].
\end{align*}
\noindent
Substituting (\ref{last term in SMP}) and the above duality relations into the variational inequality (\ref{eq variational inequality}), we have 
\begin{align*}
    &\mathbb{E}\left[\int_0^T \left\langle E_t\Big[ \int_{0}^{+\infty}\phi(t,t+\theta)Q_{b}(t+\theta)\alpha(d\theta), \hat{v}(t) \right\rangle_U dt\right] \\
    &+ \mathbb{E}\left[\int_0^T \left\langle E_t\Big[ \int_{0}^{+\infty}\phi(t,t+\theta)Q_{\sigma}(t+\theta)\alpha(d\theta), \hat{v}(t) \right\rangle_U dt\right]\\
    &+ \mathbb{E}\left[\int_0^T \left\langle E_t\Big[ \int_{0}^{+\infty} \phi(t,t+\theta)Q_{l}(t+\theta)\alpha(d\theta), \hat{v}(t) \right\rangle_U dt\right] \geqslant 0
\end{align*}
that is, recalling the notion of Hamiltonian function,
\[
\mathbb{E}\left[\int_0^T \left\langle E_t\Big[ \int_{0}^{+\infty} \phi(t,t+\theta)H_v^u(t+\theta)\alpha(d\theta), \hat{v}(t) \right\rangle_U dt\right] \geqslant 0.
\]
Referring to the proof of Theorem 1.5 in Cadenillas and Karatzas \cite{theorem15}, for all $v\in U$, we obtain that (\ref{condition neccesary SMP}) holds a.e..	
		\end{proof}

		\begin{remark}
			Condition (\ref{condition neccesary SMP}) is the SMP in integral form, which is equivalent to the following SMP without state constraint:
			\begin{equation}\label{SMP without state constrain}
				\begin{aligned}
					&\left\langle E_t\left[ \int_{0}^{+\infty}H^u_{v}(t+\theta)\phi(t,t+\theta)\alpha(d\theta)\right], u(t)\right\rangle_U   \\
					&	= \min_{v\in U} \left\langle E_t\left[ \int_{0}^{+\infty}H^u_{v}(t+\theta)\phi(t,t+\theta)\alpha(d\theta)\right],v\right\rangle_U  , \quad \text { a.e., a.s. }
				\end{aligned}
			\end{equation}
		\end{remark}

\subsection{Sufficient Conditions.}\label{chapter sufficient SMP}
		In this section, we will show that the necessary SMP with some convexity conditions can constitute the sufficient SMP for the optimal control problem (\ref{ISDDE with control})-(\ref{cost functional}). The convexity conditions are as follows:

\begin{assumption}\label{ass: convexity condition for sufficient SMP}
  For all $t\in[0,T]$ and any given $(p(t), q(t))$, $H(t,\cdot,\cdot,p(t), q(t))$ and $h(\cdot)$ are convex with respect to the corresponding variables, i.e. for each pair of $(x,v), (x',v')\in C_\lambda((-\infty,T];H)\times \mathcal{U}$, 
  \begin{align*}
        &H(t, x, v, p(t),q(t)) - H(t, x', v', p(t),q(t)) \\
        &\quad\geqslant \partial_x H(t, x',v',p(t),q(t))(x-x') + H_v(t, x',v',p(t),q(t))(v-v'),
  \end{align*}
  and for each $x_T, x_T'\in H$,
  \[
  h(x_T) - h(x_T') \geqslant h_{x_T}(x_T')(x_T - x_T').
  \]
\end{assumption}

		\begin{theorem}\label{sufficient SMP}
			(Sufficient conditions of optimality) Let $u(\cdot)\in\mathcal{U}_{ad}$ be an admissible control, $X^u(\cdot)$ be the corresponding trajectory and $(p(t),q(t))$ be the solution of adjoint equation (\ref{adjoint equation}). Suppose Assumptions~\ref{ass: Integrability}-\ref{ass: differentiability} and Assumption~\ref{ass: convexity condition for sufficient SMP} hold. If condition (\ref{condition neccesary SMP}) (or (\ref{SMP without state constrain})) holds for $u(\cdot)$, then $u(\cdot)$ is an optimal control of the stochastic control system with delay (\ref{ISDDE with control})-(\ref{cost functional}).
		\end{theorem}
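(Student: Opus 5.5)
Fix an arbitrary $v(\cdot)\in\mathcal U_{ad}$ with trajectory $X^v(\cdot)$, and set $\hat X:=X^v-X^u$ and $\hat v:=v-u$. Then $\hat X(t)=0$ and $\hat v(t)=0$ for $t\leqslant0$. We must show $J(v(\cdot))-J(u(\cdot))\geqslant0$. Split the difference as
\[
J(v(\cdot))-J(u(\cdot))=\mathbb E\big[h(X^v(T))-h(X^u(T))\big]+\mathbb E\Big[\int_0^T\big(l(t,X^v_t,v_d(t))-l^u(t)\big)dt\Big].
\]
By the convexity of $h$ in Assumption~\ref{ass: convexity condition for sufficient SMP}, the first term is bounded below by $\mathbb E\langle h_{x_T}(X^u(T)),\hat X(T)\rangle_H=\mathbb E\langle p(T),\hat X(T)\rangle_H$.

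Next apply the product formula \eqref{ito on 2 product} to $\langle p(t),\hat X(t)\rangle_H$ on $[0,T]$. Here $\hat X$ solves
\[
d\hat X=\big(b(t,X^v_t,v_d)-b^u(t)\big)dt+\big(\sigma(t,X^v_t,v_d)-\sigma^u(t)\big)dW .
\]
Taking expectations removes the stochastic integrals, which are true martingales because $p\in\mathcal S^2$, $q\in\mathcal M^2$, $\hat X\in\mathcal S^2$, and the coefficients are square integrable by Corollary~\ref{cor:apriori}. This gives
\[
\mathbb E\langle p(T),\hat X(T)\rangle_H=\mathbb E\int_0^T\Big[\langle \Delta b,p\rangle_H+\langle\Delta\sigma,q\rangle_2-\big\langle E_t[\partial_x^{*,u}b(p_{t+})+\partial_x^{*,u}\sigma(q_{t+})+\partial_x^{*,u}l(1)],\hat X(t)\big\rangle_H\Big]dt .
\]
The adjoint term is then converted back by the duality identity \eqref{eq:adapted-duality} of Proposition~\ref{prop:adapted-adjoint}, which applies since $\hat X(t)=0$ for $t\leqslant0$. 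This is exactly as in the proof of Theorem~\ref{necessary SMP}, and it turns the adjoint term into $\mathbb E\int_0^T\partial^u_xH(t)(\hat X_t)dt$. Combining with the running cost, we obtain
\[
J(v)-J(u)\geqslant\mathbb E\int_0^T\Big[H(t,X^v_t,v_d(t),p,q)-H^u(t)-\partial_x^uH(t)(\hat X_t)\Big]dt .
\]

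By the convexity of $H$ in Assumption~\ref{ass: convexity condition for sufficient SMP}, evaluated at $(x',v')=(X^u_t,u_d(t))$ and $(x,v)=(X^v_t,v_d(t))$, the integrand is bounded below by $\langle H^u_v(t),\hat v_d(t)\rangle_U$. The step needing care is that the convexity inequality is stated with $\partial_xH$, the non-anticipative derivative. This is legitimate because $\hat X_t$ frozen after $t$ lies in $C_{\lambda,t}$, and the coefficients are non-anticipative (Remark~\ref{connection of 2 derivatives}).

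Finally, use Corollary~\ref{cor:section3-specialization} with $Q=H^u_v$, extended by $0$ beyond $T$. Since $\hat v=0$ on $(-\infty,0]$, this gives
\[
\mathbb E\int_0^T\langle H^u_v(t),\hat v_d(t)\rangle_Udt=\mathbb E\int_0^T\Big\langle E_t\Big[\int_0^{+\infty}\phi(t,t+\theta)H^u_v(t+\theta)\alpha(d\theta)\Big],v(t)-u(t)\Big\rangle_Udt ,
\]
which is nonnegative by \eqref{condition neccesary SMP} applied pointwise with $v=v(t)\in U$. Hence $J(v)\geqslant J(u)$.

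The main obstacle is the integrability bookkeeping. We need the local martingales to be true martingales, which follows from BDG and the $\mathcal S^2\times\mathcal M^2$ bounds. We also need $H^u_v\in\mathcal M^2_{\mathcal F}(0,T;U)$ so that Proposition~\ref{prop:adapted-adjoint} applies; this follows from the bounds on $b_v,\sigma_v,l_v$ in Assumption~\ref{ass: differentiability}.
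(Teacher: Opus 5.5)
Your proposal is correct and follows essentially the same route as the paper: convexity of $h$ combined with the product It\^o formula for $\langle p, X^v-X^u\rangle_H$, the duality identity \eqref{eq:adapted-duality} to turn the adjoint drift into $\partial_x^uH(t)(\hat X_t)$, convexity of $H$, and Corollary~\ref{cor:section3-specialization} to rewrite $\mathbb E\int_0^T\langle H^u_v(t),\hat v_d(t)\rangle_U\,dt$ before invoking \eqref{condition neccesary SMP}. The only difference is bookkeeping: you combine the running cost with the It\^o terms before applying convexity of $H$, whereas the paper splits $J(v)-J(u)$ into $\nabla_1+\nabla_2-\nabla_3-\nabla_4$. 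Your remarks on $\hat X_t\in C_{\lambda,t}$ and on the true-martingale property usefully make explicit points the paper leaves implicit.
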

		\begin{proof} 
        For any admissible control $v(\cdot)\in \mathcal{U}_{ad}$, let $X^v(\cdot)$ be the corresponding trajectory.
        We need to prove that $J\left(v(\cdot)\right) - J\left(u(\cdot)\right) \geqslant 0$. Note that
		\begin{align*}
		    J&\left(v(\cdot)\right) - J\left(u(\cdot)\right)\\
            &= \mathbb{E}\left[\int_{0}^{T}\left\{l\left(t,X^v_t,v_d(t)\right) - l\left(t,X^u(t),u_d(t)\right)\right\}dt\right] + \mathbb{E}\Big[h\left(X^v(T)\right) - h\left(X^u(T)\right)\Big]\\
            &=\mathbb{E}\left[\int_0^T \left(H (t, X^{v}_{t},v_{d}(t), p(t), q(t)) - H^u(t)\right) dt\right]  + \mathbb{E}\Big[h\left(X^v(T)\right) - h\left(X^u(T)\right)\Big]\\
            &\quad- \mathbb{E}\left[\int_0^T\left\langle b\left(t,X^v_t,v_d(t)\right) - b\left(t,X^u_t,u_d(t)\right), p(t)\right\rangle_H dt\right] \\
            &\quad-\mathbb{E}\left[\int_0^T\left\langle \sigma\left(t,X^v_t,v_d(t)\right) - \sigma\left(t,X^u_t,u_d(t)\right), q(t)\right\rangle_2 dt \right]\\
            &:= \nabla_1 + \nabla_2 - \nabla_3 - \nabla_4.
		\end{align*}
        By the convexity of Hamiltonian function $H$, it follow that
        \begin{align*}
            \nabla_1 
            &\geqslant \mathbb{E}\left[\int_0^T \partial_x^uH(t)(X^v_t - X^u_t)dt\right] + \mathbb{E}\left[\int_0^T \left\langle H^u_v(t), v_d(t) - u_d(t)\right\rangle_U dt\right].
        \end{align*}
        Since $h$ is convex, applying It\^o's formula (\ref{ito on 2 product}) to $\left\langle p(t), X^v(t) - X^u(t)\right\rangle_H$ on $[0,T]$ and taking expectation, we obtain 
            \begin{align*}
            \nabla_2 &\geqslant \mathbb{E}\left[\left\langle h_{x_{T}}(X^u(T)), X^v(T) -X^u(T) \right\rangle_H \right]  \\
            &= \mathbb{E}\left[\int_0^T \left\langle b(t, X^v_t, v_d(t)) - b(t, X^u_t, u_d(t)), p(t) 
                \right\rangle_H dt\right]\\
            &\quad + \mathbb{E}\left[\int_0^T \left\langle \sigma(t, X^v_t, v_d(t)) - \sigma(t, X^u_t, u_d(t)), q(t)
                \right\rangle_2 dt\right]\\
            &\quad - \mathbb{E}\left[\int_0^T \left\langle E_t\Big[\partial_x^{*,u}b(t)(p_{t+}) + \partial_x^{*,u}\sigma(t)(q_{t+}) + \partial_x^{*,u}l(t)(1)\Big] , X^v(t) -X^u(t)
                \right\rangle_H dt\right] \\
            & = \nabla_3 + \nabla_4 - \mathbb{E}\left[\int_0^T \left\langle \partial_x^ub(t)(X^v_t - X^u_t), p(t)\right\rangle_H dt\right]\\
            & \quad- \mathbb{E}\left[\int_0^T \left\langle \partial_x^u\sigma(t)(X^v_t - X^u_t), q(t)\right\rangle_2 dt\right] -\mathbb{E}\left[\int_0^T \partial_x^ul(t)(X^v_t - X^u_t) dt\right]\\
            &= \nabla_3 + \nabla_4 - \mathbb{E}\left[\int_0^T \partial_x^uH(t)(X^v_t - X^u_t)dt\right],
            \end{align*}
            where the second last step follows from the duality relationship (\ref{eq:adapted-duality}) and the last step holds by the definition of Hamiltonian function $H$.
			Therefore, by Corollary~\ref{cor:section3-specialization} and maximum condition (\ref{condition neccesary SMP}) or (\ref{SMP without state constrain}), we obtain
			\begin{align*}
			    	J\left(v(\cdot)\right) - J\left(u(\cdot)\right) &\geqslant \mathbb{E}\left[\int_0^T \left\langle H^u_v(t), v_d(t) - u_d(t)\right\rangle_U dt\right]\\
                    &= \mathbb{E}\left[\int_{0}^{T}\left\langle E_t\left[ \int_{0}^{+\infty}\phi(t,t+\theta)H^u_{v}(t+\theta)\alpha(d\theta)\right],v(t)-u(t)\right\rangle_U dt\right] \geqslant 0.
			\end{align*}
			Since $v(\cdot)\in \mathcal{U}_{ad}$ is arbitrary, $u(\cdot)$ is an optimal control.
		\end{proof}
		

\section{Application.} \label{chapter applications}

    In this section, we solve a linear-quadratic (LQ) control problem of infinitely delayed SEE. Applying the theoretical result developed in Section~\ref{section SMP}, we aim to derive the optimal control explicitly. 

Consider the following linear stochastic control system with delay:
\begin{equation}\label{eq: example state dynamics}
    \left\{
			\begin{aligned}
				dX^v(t) &= \Big[A(t) X^v_t + B(t)v_d(t)\Big]dt + \Big[C(t) X^v_t + D(t)v_d(t)\Big] dW(t), \quad t\in[0,T],\\
				X^v(t) &= \gamma(t), \quad v(t) = \varphi(t), \qquad t\in (-\infty, 0],
			\end{aligned}
			\right.
\end{equation}

Denote by $\mathcal{U}_{ad}$ the set of all admissible controls $v(\cdot)$ of the form:
		\begin{equation*}
		    		v(t)= \left\{\begin{aligned}
				&\varphi(t), &&t \in(-\infty, 0];\\ 
				&v(t) \in \mathcal{M}_{\mathcal{F}}^{2}\left(0, T ; U\right), \quad v(t) \in U, \text { a.s. },   &&t \in[0,T].\end{aligned}\right.
		\end{equation*}
        
Our goal is to minimize the following quadratic cost functional 
		\begin{equation}\label{example cost functional}
			J(v(\cdot))= \frac{1}{2}\mathbb E\left[\int_0^T\left\{ \left\langle L(t)X^v_t, X^v_t\right\rangle_H + \left\langle\tilde{L}(t)v_d(t), v_d(t)\right\rangle_U \right\} dt + \left\langle X^v(T), X^v(T)\right\rangle_H\right].
		\end{equation}
subject to (\ref{eq: example state dynamics}) over $v(\cdot)\in \mathcal{U}_{ad}$.

In the above, the initial datum $\gamma(\cdot)\in \mathcal{M}_\mathcal{F}^2(-\infty, 0 ; H)$ satisfies $\gamma_0 \in L^2(\mathcal{F}_0;C_\lambda((-\infty,0];H))$ and $\varphi(\cdot)\in \mathcal{M}_\mathcal{F}^2(-\infty, 0 ; U)$. 
The $\mathcal{F}$-adapted functions $A(\cdot), B(\cdot), C(\cdot), D(\cdot),L(\cdot), \tilde{L}(\cdot)$ are uniformly bounded, and $\tilde{L}^{-1}(\cdot)$ is also bounded.  
Moreover, to ensure the convexity of the Hamiltonian function, we assume that $L(\cdot), \tilde{L}(\cdot)$ are symmetric and nonnegative definite a.e., a.s. and $\tilde{L}(\cdot)$ is uniformly positive definite 	a.e., a.s. so that $\tilde{L}(\cdot)$ is invertible.

The Hamiltonian function $H$ takes the form
		\begin{align*}
			H&\left(t, x, v,p,q\right)= \left\langle A(t)x + B(t)v, p  \right\rangle_H + \left\langle C(t)x + D(t)v, q  \right\rangle_2 + 
            \frac{1}{2}\left\langle L(t)x, x\right\rangle_H +  \frac{1}{2}\langle\tilde{L}(t)v, v\rangle_U.
		\end{align*}
        
Let $u(\cdot)$ be an optimal control and $X^u(\cdot)$ be the corresponding optimal trajectory to equation (\ref{eq: example state dynamics}).
The associated adjoint equation becomes
		\begin{equation}\label{application 2 adjoint equation}
			\left\{\begin{aligned}
				-d p(t)= & \bigg\{ E_t\Big[\partial^{*,u}_xb(t)(p_{t+}) + \partial^{*,u}_x\sigma(t)(q_{t+}) + \partial_x^{*,u}l(t)(1)\Big] 
				 \bigg\} d t
				- q(t) dW(t), \quad t\in[0,T]; \\
				p(T)= & X^u(T), \qquad p(t)=0, \quad t\in(T,+\infty), \qquad q(t) = 0, \quad  t\in[T,+\infty),
			\end{aligned}\right.
		\end{equation}

In this case, the non-anticipative derivatives are denoted as follows: for each $Z\in \mathcal{M}^2_{\mathcal{F}}(-\infty,T;H)$, 
\begin{align*}
	\partial_{x}^ub(Z)=A(t)Z_t,\quad \partial_{x}^u \sigma(Z)=C(t)Z_t, \quad \partial_{x}^u l(Z)=\langle L(t) X^u(t),Z(t)\rangle_H.
\end{align*}

        \begin{remark}
            By Theorem~\ref{thm:ISFDE-wellposed}, the state equation (\ref{eq: example state dynamics}) has a unique solution $X^v(\cdot) \in \mathcal{M}_\mathcal{F}^2\left(-\infty, T; H\right)$ for all $v(\cdot)\in \mathcal{U}_{ad}$. The well-posedness of the adjoint equation (\ref{application 2 adjoint equation}) holds by Theorem~\ref{thm:IABSDE-wellposed}.
        \end{remark}
        
The necessary maximum condition (\ref{condition neccesary SMP}) or (\ref{necessary SMP}) implies that 
\[
E_t\left[ \int_0^{+\infty}\phi(t, t+\theta)B^{*}(t+\theta)p(t+\theta)\alpha(d\theta) + \int_0^{+\infty}\phi(t, t+\theta)D^{*}(t+\theta)q(t+\theta)\alpha(d\theta) \right] + \tilde{L}(t)u(t) = 0.
 \]

By the convexity of $H$, we know that the above formula is sufficient. Thus, the optimal control of the LQ optimization problem (\ref{eq: example state dynamics})-(\ref{example cost functional}) is given by 
\[
u(t) = -\tilde{L}^{-1}(t)\cdot E_t\left[ \int_0^{+\infty}\phi(t, t+\theta)\Big\{B^{*}(t+\theta)p(t+\theta) + D^{*}(t+\theta)q(t+\theta)\Big\}\alpha(d\theta) \right],
\]
where $(p(\cdot), q(\cdot))$ solve the equation (\ref{application 2 adjoint equation}).

		\section{Conclusion.}
        This paper is concerned with optimal control problems for SEE with infinite delay in Hilbert spaces. Our control system allows general functional dependence on the whole past trajectory of the state dynamics and includes delayed controls via an integral with respect to a general finite measure. Within a fading-memory framework, we establish SMP and derive both necessary and sufficient optimality conditions. In this way, we extend the SMP from finite-delay systems to the infinite-delay setting, and generalize existing infinite-delay results beyond the integral-delay case. As an application, we study an infinite-delay LQ control problem and obtain the explicit optimal control.

        Several directions may be considered in future research. A natural extension is to introduce an unbounded infinite-dimensional operator into the state equation, as in \cite{SDDE-12.delay-2type-2, SPDE-12.delay-2type-1, songjian2025}, which would connect the present framework with stochastic partial differential equation (SPDE) with delay. 
        Another possible direction is to consider problems with partial information or more general control constraints.
		It would also be meaningful to develop numerical methods for the approximation of the state equation, the adjoint equation, and the optimal controls.
		\\
		\\
		\textbf{Acknowledgements.} The author thanks Professor Shuzhen Yang and Dr. Yiming Chen for valuable comments and suggestions, which have helped improve the presentation of this paper.

		\bibliography{citation}
		
	\end{document}